
\documentclass[11pt,a4]{amsart}
 \usepackage{epsfig}
\usepackage{amssymb}
\usepackage{color}
\usepackage{mathrsfs}
\usepackage{hyperref}

\usepackage{bbm}

\def\1{\mathbbm{1}}

 \textwidth 13.2cm



\setcounter{tocdepth}{2}




\newcommand\R{{\mathbb R}}


\def\AA{{\mathcal A}}
\def\BB{{\mathcal B}}

\def\FF{{\mathcal F}}
\def\GG{{\mathcal G}}

\def\LL{{\mathcal L}}
\def\MM{{\mathcal M}}

\def\TT{{\mathcal T}}


\DeclareMathOperator{\sign}{sign}



\newcommand{\sk}{\smallskip}

\setlength{\marginparwidth}{.8in}
\let\oldmarginpar\marginpar
\renewcommand\marginpar[1]{\-\oldmarginpar[\raggedleft\footnotesize #1]%
{\raggedright\footnotesize #1}}





\newtheorem{theo}{Theorem}

\newtheorem{lem}[theo]{Lemma}

\theoremstyle{definition}

\theoremstyle{remark}
\newtheorem{rem}[theo]{Remark}



\newcommand{\beqn}{\begin{equation}}
\newcommand{\eeqn}{\end{equation}}
\newcommand{\bear}{\begin{eqnarray}}
\newcommand{\eear}{\end{eqnarray}}
\newcommand{\bean}{\begin{eqnarray*}}
\newcommand{\eean}{\end{eqnarray*}}










\def\signcc{\bigskip \begin{center} {\sc Chuqi Cao
      \par\vspace{3mm} 
      BISMA, Tsinghua University, and \par
      CEREMADE, Universit\'e Paris-Dauphine, \par
      Place du Mar\'echal de Lattre de Tassigny \par
      75775 Paris Cedex 16 FRANCE   \par
      OCDID: 0000-0002-5451-9962\par
      \vspace{3mm} e-mail:}
    \tt{chuqicao@gmail.com} \end{center}}


\begin{document}

\title[The KFP Equation with general force]{The Kinetic Fokker-Planck Equation with General Force}

\author{Chuqi Cao }

\maketitle

\begin{center} {\bf 
\today}
\end{center}

\begin{abstract} 

We consider the kinetic Fokker-Planck equation with a class of general force. We prove the existence and uniqueness of a positive normalized equilibrium (in the case of a general force) and  establish
some exponential rate of convergence to the equilibrium (and the rate can be explicitly computed).  Our results improve results about classical force to general force case. Our result also improve the rate of convergence for the Fitzhugh-Nagumo equation from  non-quantitative to quantitative explicit rate.

  \end{abstract}

\bigskip

\tableofcontents

\section{Introduction}
\label{sec1}
\setcounter{equation}{0}
\setcounter{theo}{0}

In this paper, we consider the kinetic Fokker-Planck (KFP for short) equation with general force and confinement
\sk
\beqn\label{E11}
\partial_t f =\LL f := -v \cdot \nabla_x f +\nabla_x V(x) \cdot \nabla_v f +\Delta_v f + \hbox{div}_v(\nabla_v W(v) f),
\eeqn
for a density function $f = f(t, x, v)$, with $t \ge 0 ,\ x \in \R^d,\ v \in \R^d$, with
\beqn
\nonumber
V(x)=\frac { \langle x \rangle^{\gamma} } {\gamma},\quad \gamma \ge 1 ,\quad W(v)=\frac{ \langle v \rangle^{\beta} } {\beta}, \quad \beta \ge 2,
\eeqn
where $ \langle x \rangle^2 := 1 + |x|^2 $, and  the Fitzhugh-Nagumo equation 
\beqn\label{E12}
\partial_t f :=\LL f  = \partial_x(A(x, v) f ) + \partial_v(B(x, v ) f ) +  \partial^2_{vv} f,
\eeqn
with 
\beqn
\nonumber
A(x, v) = ax- bv, \quad B(x, v) =v(v-1) (v-c) +x,
\eeqn
for some $a, b, c>0$.
The evolution equations are complemented with an initial datum
\beqn
\nonumber
f(0, x, v) = f_0(x, v ) \quad \hbox{on} \ \R^{2d} .
\eeqn
It's easily seen that both equations are mass conservative,  that is
\beqn
\nonumber
\MM( f(t,\cdot ))= \MM (f_0), 
\eeqn
where we define the mass of $f$ by
\beqn
\nonumber
\MM(f) =\int_{\R^d \times \R^d} f(x, v) dx dv.
\eeqn
When $G$ satisfies 
\beqn
\nonumber
\LL G=0, \quad \MM(G)=1, \quad G > 0,
\eeqn
we say that $G$ is a positive normalized steady state.\\
For a given weight function $m$, we will denote $L^p(m) = \{ f | fm \in L^p \}$ the associated Lebesgue space and $\Vert f \Vert_{L^p(m)} = \Vert f m \Vert_{L^p}$ the associated norm, for $p=2$ we also use $(f, g)_{L^2(m)}$ to denote the associate scalar product.

\noindent With these notations, we can introduce the main result of this paper.

\begin{theo}\label{T11}

(1)  When $2 \le \beta, 1 \le \gamma$, there exist a weight function $m>0$ and a positive normalized steady state $G\in L^1(m)$  such that for any initial datum $f_0 \in L^p(m), p \in [1, \infty]$, the associated solution $f(t, \cdot)$ of the kinetic Fokker-Planck equation (\ref{E11}) satisfies
\beqn
\nonumber
\Vert f(t , \cdot)	- \MM(f_0) G\Vert_{L^p(m)}  \le C e^{- \lambda t } \Vert f_0 -\MM(f_0) G\Vert_{L^p(m)}  ,
\eeqn
for some constant $C, \lambda >0$.\\
(2)  The same conclusion holds for the kinetic Fitzhugh-Nagumo equation (\ref{E12}).
\end{theo}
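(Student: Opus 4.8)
The plan is to follow the now-standard hypocoercivity/enlargement strategy of Gualdani--Mischler--Mouhot and Mischler--Mouhot, adapted to the general force setting. The generator $\LL$ is not symmetric and does not have a spectral gap in the natural $L^2(G^{-1/2})$ space for the full range of parameters, so I would instead establish decay in a weighted space $L^p(m)$ with a polynomial or stretched-exponential weight $m$ by splitting $\LL = \AA + \BB$, where $\BB$ is a dissipative part with good decay and $\AA$ is a $\BB$-bounded (indeed regularizing and confining) perturbation. First I would construct the weight $m$ explicitly: for \eqref{E11} one takes $m = \exp(\kappa \langle v\rangle^s + \dots)$ or a polynomial weight tuned to $\beta,\gamma$, and one computes the drift term $\LL^* m / m$, showing it is bounded above by $-\phi(v,x) + C\1_{B_R}$ for some coercive $\phi\to\infty$; this is the Lyapunov/confinement estimate and it is where the hypotheses $\beta\ge 2$, $\gamma\ge 1$ enter. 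For the Fitzhugh--Nagumo case \eqref{E12} the analogous computation uses the cubic nonlinearity in $B$ to produce confinement in $v$ and the linear coupling to produce confinement in $x$.

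The key steps, in order, would be: (i) prove well-posedness of the semigroup $S_\LL(t)$ on each $L^p(m)$ and a first decay estimate $\|S_\BB(t)\|_{L^p(m)\to L^p(m)} \lesssim e^{-a t}$ for the dissipative part, together with the regularization gain that $\AA S_\BB(t)$ maps $L^p(m)$ into a smaller space (this is where a hypoelliptic/averaging-lemma type gain of velocity and space regularity is used, as in the Hérau and Villani hypocoercivity computations, to make $\AA S_\BB$ effectively compact); (ii) run the factorization/enlargement argument: if the decay and spectral gap are known in a small reference space $E = L^2(\mu)$ with $\mu$ Gaussian-like (obtained by the $H^1$-hypocoercivity method of Villani / Dolbeault--Mouhot--Schmeiser on the region where the force is quasi-quadratic, or by Hérau--Nier type estimates), then Theorem of Mischler--Mouhot lifts the spectral gap to the larger space $L^p(m)$, giving $\Sigma(\LL)\cap\{\Te z > -\lambda\} = \{0\}$ with $0$ simple; (iii) identify the eigenspace: $\LL G = 0$ has a unique positive normalized solution $G$ — existence by a Krein--Rutman / Doeblin--Harris argument on the Lyapunov structure from step (i), uniqueness and positivity by irreducibility of the semigroup; (iv) conclude that $f(t) - \MM(f_0)G = S_\LL(t)(f_0 - \MM(f_0)G)$ decays like $e^{-\lambda t}$, the projection $\MM(\cdot)G$ being exactly the spectral projector onto $\ker\LL$, using mass conservation to identify it.

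I expect the main obstacle to be two-fold. The first genuine difficulty is the hypocoercive spectral-gap estimate in the reference space when the force is only $\langle x\rangle^\gamma/\gamma$ with $\gamma$ possibly large (super-quadratic potential) or, worse, the Fitzhugh--Nagumo drift which is neither gradient nor confining in the naive sense: one must find the right twisted norm (a carefully chosen quadratic form $\|f\|^2 + a\|\nabla_v f\|^2 + 2b\langle\nabla_v f,\nabla_x f\rangle + c\|\nabla_x f\|^2$ with $x,v$-dependent coefficients) so that $-\langle \LL f, f\rangle$ controls it, and check the cross terms coming from $\nabla_x V$ and from the non-gradient part of $B$ can be absorbed. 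The second is verifying the regularization estimate for $\AA S_\BB(t)$ with sharp short-time behaviour $t^{-k}$ integrable near $0$, which requires the hypoelliptic gain in both variables and interacts delicately with the weight $m$; getting a constant uniform in $p\in[1,\infty]$ (the theorem claims all $p$) then needs an interpolation/duality step between the $L^1(m)$ and $L^\infty(m)$ endpoints. Once these two analytic inputs are in place, the abstract enlargement machinery and the Krein--Rutman identification of $G$ are comparatively routine.
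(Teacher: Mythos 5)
Your overall architecture (Lyapunov weight, splitting $\LL=\AA+\BB$, hypoelliptic regularization of $\AA S_\BB(t)$, Duhamel iteration to pass between the $L^p(m)$ spaces) matches large parts of the paper, and your step (i) is essentially the paper's Section 6. But the engine of your argument --- step (ii), lifting a spectral gap from a small reference space $E=L^2(\mu)$ with $\mu$ Gaussian-like, obtained by Villani / Dolbeault--Mouhot--Schmeiser / H\'erau--Nier hypocoercivity --- is exactly the step that is not available for these models, and it is the reason the paper takes a different route. For $\beta>2$ the measure $e^{-W(v)-V(x)}$ is not a steady state (the transport part $-v\cdot\nabla_x+\nabla_xV\cdot\nabla_v$ only cancels against the Fokker--Planck part when $\nabla_vW=v$), so there is no explicit equilibrium and no natural Hilbert space in which $\LL$ decomposes into a symmetric dissipative part plus a skew transport part; for Fitzhugh--Nagumo the drift $B$ is not the gradient of a confining potential in the joint variables and the steady state is likewise non-explicit. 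This is precisely why \cite{MQT} could only obtain a non-quantitative rate (via Krein--Rutman), which the present paper improves. Saying that ``one must find the right twisted norm'' does not close this gap: no such quantitative reference-space estimate is known here, and your proof cannot be completed as written.

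The paper replaces the reference-space spectral gap by Harris's theorem: the Lyapunov condition $\LL^*m\le-\alpha m+b$ (your step (i)) together with a Doeblin minorization $S_Tf\ge\mu\int_{B_R}f$ for all $f\ge0$ with $\Vert f\Vert_{L^1(m)}\le1$. The minorization is the real work and is absent from your plan. It is proved in three stages: mass conservation plus the Lyapunov bound force a pointwise lower bound $f(t,x_0,v_0)\ge\gamma\int_{B_R}f_0$ at some point of a fixed ball; the hypoelliptic regularization (your item on $\AA S_\BB$, pushed up to $C^{2,\delta}$) upgrades this to a lower bound on a small ball by continuity; and a quantitative spreading-of-positivity lemma --- a barrier/subsolution construction along the characteristics of $v+\Phi(x)$, generalizing Villani's Appendix A.19 to unbounded coefficients via the substitution $f=hG$ with $G=e^{-W/2}$ --- propagates the bound to an arbitrarily large ball. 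Existence of $G$ then comes from Markov--Kakutani on a weak-$*$ compact convex set rather than Krein--Rutman, and the $p>1$ cases follow by the Duhamel iteration you describe. To complete a proof you must either supply the missing reference-space hypocoercivity estimate (a substantial new result in itself) or switch to the Harris/minorization mechanism.
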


In the results above the constants $C$ and $\lambda$ can be explicitly estimated in terms of
the parameters appearing in the equation by following the calculations in the proofs.
We do not give them explicitly since we do not expect them to be optimal, but they
are nevertheless completely constructive.

\begin{rem}

Theorem \ref{T11} is also true when $V(x)$ behaves like $ \langle x \rangle^{\gamma} $ and $W(v)$ behaves like $ \langle v \rangle^{\beta} $, that is for any $V(x)$ satisfying

\beqn
\nonumber
C_1 \langle x \rangle^{\gamma} \le V(x) \le C_2\langle x \rangle^{\gamma} , \ \ \forall x \in \R^d ,
\eeqn

\beqn
\nonumber
C_3|x| \langle x \rangle^{\gamma-1} \le x \cdot \nabla_x V(x) \le C_4|x|\langle x \rangle^{\gamma-1}, \ \ \forall x \in B_R^c ,
\eeqn
and 
\beqn
\nonumber
|D^n_x V(x)| \le C_n \langle x \rangle^{\gamma-2},\quad \forall x \in \R^d , \quad  \forall n \ge 2,
\eeqn
for some constant $C_i >0$, $R>0$, and similar estimates holds for $W(v)$.
\end{rem}

We prove both cases of Theorem \ref{T11} by proving the following theorem, which gives convergence result for more general KFP type models.

\begin{theo}\label{T12}
Consider the following equation
\beqn\label{E13}
\partial_t f :=\LL f  = \textup{div}_x(A(x, v) f ) + \textup{div}_v(B(x, v ) f ) +  K\Delta_v f,
\eeqn
with $K>0$ constant, $A(x, v), B(x, v) \in C^1$ and 
\beqn
\nonumber
A(x, v) = -v+\Phi(x),
\eeqn
 where $\Phi(x)$ is Lipschitz
\beqn
\nonumber
|\Phi(x) - \Phi(y)| \le M |x-y|,
\eeqn
for some $M>0$. We assume also that there exist $W(x, v)$ such that
\beqn
\nonumber
\nabla_vW(x, v) = B(x, v),
\eeqn
define
\bear
\nonumber
\phi_2(m)  &=& v \cdot \frac {\nabla_x m} m - \Phi(x) \cdot \frac {\nabla_x m} {m} + \frac 1 2 \textup{div}_x \Phi(x) + K\frac {|\nabla_v m|^2} {m^2}
\\ \nonumber
 &+&K\frac{\Delta_v m} {m} - B(x, v) \cdot \frac{\nabla_v m } {m} + \frac 1 2 \textup{div}_v B(x, v).
\eear
If we can find a weight function $m$ and a function $H \ge 1$ such that the four conditions holds\\
(C1)(Lyapunov condition) For some $\alpha, b>0$ there holds
\beqn
\nonumber
\LL^*m \le -\alpha m +b,
\eeqn
(C2)for some constant $C_1, C_2, C_3 >0$ we have
\beqn
\nonumber
-C_1 H \le \phi_2(m)  \le -C_2 H +C_3,
\eeqn
(C3)For any integer $n \ge 1$ fixed, for any $\epsilon>0$ small, we can find a constant $C_{\epsilon, n} $ such that
\beqn
\nonumber
\sum_{k=1}^n |D_x^k \Phi(x)| +\sum_{k=1}^n |D_{x, v}^k B(x, v)| \le C_{n,\epsilon} + \epsilon H,
\eeqn
(C4)For some constant $C_4>0$ there holds
\beqn
\nonumber
\frac {\Delta_{x, v} m} {m } \ge -C_4.
\eeqn
Then there exist a positive normalized steady state $G$ such that
\beqn
\nonumber
\Vert f(t , \cdot)	- \MM(f_0) G\Vert_{L^1(m)}  \le C e^{- \lambda t } \Vert f_0 -\MM(f_0) G\Vert_{L^1(m)},
\eeqn
for some $C, \lambda>0$. In addition,  for any $p \in[1, \infty]$, if 
\beqn
\nonumber
\varphi_p(m) \le -a +M \1_{B_R},
\eeqn
for some constant $a, M, R>0$, where
\bear
\nonumber
\varphi_p(m)  &=& v \cdot \frac {\nabla_x m} m - \Phi(x) \cdot \frac {\nabla_x m} {m} + ( 1 -\frac 1 p) \textup{div}_x \Phi(x) +  2K(1-\frac 1 p) \frac {|\nabla_v m|^2} {m^2}
\\ \nonumber
 &+& K (\frac 2 p -1)\frac{\Delta_v m} {m} - B(x, v) \cdot \frac{\nabla_v m } {m} + (1-\frac 1 p) \textup{div}_v B(x, v),
\eear
then we have
\beqn
\nonumber
\Vert f(t , \cdot)	- \MM(f_0) G\Vert_{L^p(m)}  \le C e^{- \lambda t } \Vert f_0 -\MM(f_0) G\Vert_{L^p(m)}.
\eeqn
\end{theo}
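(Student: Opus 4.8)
\emph{Strategy and splitting.} The proof follows the factorization/enlargement scheme in the spirit of Gualdani--Mischler--Mouhot and Mischler--Mouhot, combined with a quantitative hypoelliptic regularization and a twisted--norm hypocoercive estimate. Write $\LL=\AA+\BB$ with $\AA:=M\1_{B_R}$ (the bounded multiplication operator appearing in the $\varphi_p$ hypothesis, with $M,R$ to be fixed) and $\BB:=\LL-M\1_{B_R}$. Differentiating $t\mapsto\|S_\BB(t)f\|_{L^p(m)}^p$ and integrating by parts (using Kato's inequality when $p=1$ or when $f$ changes sign) one checks that the resulting zeroth--order field is exactly $p\,\varphi_p(m)$; hence the hypothesis $\varphi_p(m)\le -a+M\1_{B_R}$ yields $\|S_\BB(t)\|_{L^p(m)\to L^p(m)}\le e^{-at}$ once the $\AA$--term is absorbed. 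For $p=1$ this field is $\varphi_1(m)=\LL^*m/m$, so condition (C1), together with the standard requirements $m\ge 1$ and $m\to\infty$ at infinity, produces a bound of exactly this form for a suitable choice of $M,R$. Thus $\BB-a$ is dissipative in every $L^p(m)$ for which the field condition is available --- all $p$ under the extra hypothesis, and at least $p=1$ under (C1) alone.

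\emph{Quantitative hypoelliptic regularization.} Conjugating by $m$ and running a H\"ormander/Villani--type time--weighted energy estimate for a functional of the form $\|f\|_{L^2(m)}^2+c_1t^3\|\nabla_v f\|_{L^2(m)}^2+c_2t^2(\nabla_v f,\nabla_x f)_{L^2(m)}+c_3t\|\nabla_x f\|_{L^2(m)}^2$ (with suitable weight--gain corrections), one differentiates the equation in $x$ and in $v$; condition (C3) serves to absorb the freshly created first-- and second--order coefficients of $\Phi$ and $B$ into the coercive $H$--term furnished by (C2), while (C4) controls the contributions of $\Delta_{x,v}m/m$ produced when the weight is carried across the Laplacian. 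This yields the regularizing bound: $S_\BB(t)$, and more generally the iterated time--convolutions $(\AA S_\BB)^{*n}(t)$ for $n$ large, map $L^p(m)$ continuously into a reference space $E$ --- a weighted Sobolev space with a reinforced same--family weight, continuously and compactly embedded in $L^1(m)$ and in every $L^p(m)$ --- with norm $\lesssim t^{-\kappa}e^{-a't}$ for some $a'\in(0,a)$ and $\kappa\ge 0$; in particular $S_\LL(t)$ becomes quasi--compact on each $L^p(m)$ with essential spectral radius $\le e^{-at}$.

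\emph{Steady state, spectral gap, conclusion.} A twisted--norm hypocoercive estimate carried out in the reference space $E$ --- the Fisher--information term coming from the $v$--diffusion, the confinement supplied by the $H$--term of (C2), and the mixed $(\nabla_v,\nabla_x)$ term handling the degeneracy, with (C3)--(C4) closing the commutators --- shows $\|S_\LL(t)h\|_E\le Ce^{-\lambda t}\|h\|_E$ for all $h$ with $\MM(h)=0$. Since $S_\LL$ is mass--conserving and positive, this produces a unique positive normalized steady state $G$ (the $E$--limit of $S_\LL(t)f_0$ for $\MM(f_0)=1$, positivity coming from the strong maximum principle/irreducibility of the hypoelliptic semigroup) and gives $\Sigma(\LL|_E)\cap\{\Te z>-\lambda\}=\{0\}$, simple, with spectral projector $f\mapsto\MM(f)G$. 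Feeding Steps 1--2 into the enlargement theorem --- $S_\BB$ dissipative at rate $a$ in $L^p(m)$, $\AA$ bounded on $L^p(m)$, $(\AA S_\BB)^{*n}(t)\colon L^p(m)\to E$ with the above decay, and $E\hookrightarrow L^p(m)$ --- transfers this spectral picture to $L^p(m)$, which gives $\|S_\LL(t)(f_0-\MM(f_0)G)\|_{L^p(m)}\le Ce^{-\lambda t}\|f_0-\MM(f_0)G\|_{L^p(m)}$. For the $L^1(m)$ statement only (C1) enters (through Step 1 at $p=1$); the stated inequality for $\varphi_p(m)$ is precisely what makes Step 1, and hence the whole argument, available at a general $p\in[1,\infty]$. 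Alternatively, the $L^1(m)$ statement (existence and uniqueness of $G$ and the constructive rate) follows directly from (C1) together with the Doeblin minorization implied by the quantitative smoothing of Step 2, via Harris's theorem.

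\emph{Main difficulty.} The crux lies in Steps 2--3: with a general, non--Gaussian weight $m$ and a merely Lipschitz force whose derivatives may grow, one must close hypoelliptic and hypocoercive energy estimates in which every commutator between the degenerate $v$--diffusion, the transport field $v\cdot\nabla_x$, the force terms $\Phi,B$, and the weight $m$ is controlled. Conditions (C2)--(C4) are designed to permit this, but choosing the weight --- in particular the mixed $x\!\cdot\!v$ correction in $\log m$ that turns transport into $x$--confinement --- and tuning the parameters so that the single coercive $H$--term dominates \emph{every} error term uniformly on all of $\R^{2d}$, rather than only outside a compact set, is the delicate point, and it is also what makes the rate $\lambda$ explicit.
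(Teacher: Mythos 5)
Your Step 1 (the splitting $\LL=\AA+\BB$ with $\AA=M\1_{B_R}$ and the computation showing that $\varphi_p(m)$ is exactly the zeroth--order field governing $\frac{d}{dt}\|S_\BB(t)f\|_{L^p(m)}^p$) and your Step 2 (the time--weighted H\"ormander/Villani functional, with (C3) absorbing the differentiated coefficients into the coercive $H$--term from (C2) and (C4) controlling $\Delta_{x,v}m/m$) reproduce the paper's Sections 3 and 5 faithfully, as does the final Duhamel/enlargement iteration transferring the $L^1(m)$ decay to $L^p(m)$. The gap is in how you obtain decay on the mass--zero subspace. Your main route --- a twisted--norm hypocoercive estimate in a reference space $E$ yielding $\|S_\LL(t)h\|_E\le Ce^{-\lambda t}\|h\|_E$ for $\MM(h)=0$ --- does not close in this setting. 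Condition (C2) gives $\phi_2(m)\le -C_2H+C_3$, which is coercive only outside a compact set; on the compact set a pure energy estimate cannot see the constraint $\MM(h)=0$, so it yields at best decay modulo a compactly supported remainder. In the classical case one removes that remainder with a local Poincar\'e inequality relative to the explicit Gaussian--type equilibrium; here the equilibrium is non--explicit (indeed its existence is part of what must be proved), so that mechanism is unavailable. This is precisely why the paper abandons hypocoercivity in favor of Harris's theorem.

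Your fallback sentence points at the right route but elides its hardest ingredient: you assert the Doeblin minorization $S_Tf\ge\mu\int_{B_R}f$ is ``implied by the quantitative smoothing of Step 2.'' It is not. Smoothing gives \emph{upper} bounds and continuity; the required \emph{lower} bound is obtained in the paper by a three--step positivity argument: (i) mass conservation plus the Lyapunov bound locate a point $(x_0,v_0)$ in a fixed ball $B_\rho$ where $f(t,x_0,v_0)\ge\gamma\int_{B_R}f_0$ (Lemma 4.1); (ii) the $C^{2,\delta}$ regularization upgrades this to a lower bound on a small ball around $(x_0,v_0)$; (iii) a spreading--of--positivity lemma (Theorem 4.4, proved in Appendix A by constructing an explicit subsolution $\delta e^{-\mu Q}-\epsilon$ along the characteristics of $v+\Phi(x)$, and generalizing Villani's Lemma A.19 to the unbounded coefficients permitted here via the gradient structure $A=\nabla_vW$) enlarges the small ball to a fixed ball containing $B_\rho$, with constants independent of $(x_0,v_0)$ and of $f_0$. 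Step (iii) is a Harnack--type lower--bound construction logically independent of the smoothing estimate, and it is the heart of the paper's proof of (H2); without it, or some substitute irreducibility argument, neither of your two routes reaches the conclusion. Once (H2) is in hand, the paper's existence of $G$ comes from the Lyapunov condition via Markov--Kakutani, and positivity of $G$ from applying the same minorization to $G$ itself, rather than from a spectral--projector argument.
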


\begin{rem}\label{R11}
In fact $\phi_2(m)$ satisfies 
\bear
\nonumber
\int_{\R^d \times \R^d }  (f (\LL g) + g (\LL f)) m^2 &=&-2K \int_{\R^d \times \R^d } \nabla_v f \cdot \nabla_v g m^2 + 2\int_{\R^d \times \R^d }  f g \phi_2(m)m^2,
\eear
and $\varphi_p(m)$ satisfies
\bear
\nonumber
 \int_{\R^d \times \R^d } \sign f |f|^{p-1} \LL f m^p=- K\int_{\R^d \times \R^d } |\nabla_v( m f )|^2 |f|^{p-2} m^{p-2} + \int_{\R^d \times \R^d }  |f|^p\varphi_p(m)m^p.
\eear
the computation can be found in Appendix \ref{secAB}. Condition (C2)-(C4) ensures some regularity estimate which we will see in Section \ref{sec3}.
\end{rem}

\begin{rem}\label{R12}
For the kinetic Fokker-Planck equation with general force \ref{E11}, we can take 
\beqn
\nonumber
W(x, v) = F(v) +v \cdot \nabla_x V(x),
\eeqn
and
\beqn
\nonumber
m=e^{\lambda H_1}, \quad  H_1 =|v|^2 + V(x)+ \epsilon v \cdot \nabla_x  \langle x \rangle, \quad H =  \langle v \rangle^\beta + \langle x  \rangle^{\gamma-1}+1,
\eeqn
for some $\lambda, \epsilon>0$ small. For the kinetic Fitzhugh-Nagumo equation (\ref{E12}), we can take
\beqn
\nonumber
m = e^{\lambda(x^2 +v^2)}, \quad H= |v|^4 + |x|^2+1,\quad W = \frac 1 4 |v|^4 - \frac 1 3(1+c) v^3  +  \frac 1 2 |v|^2 + x \cdot v,
\eeqn
for some constant $\lambda>0$, the computation can be found in Section \ref{sec6} below. 
\end{rem}

\begin{rem}

For the kinetic Fitzhugh-Nagumo equation (\ref{E12}), an exponential convergence with non-quantitative rate to the convergence has already been proved in   \cite{MQT}, our method improves the result to a quantitative rate.
\end{rem}

If $\beta = 2$, the equation (\ref{E11}) will turns to the classical KFP equation
\beqn
\nonumber
\partial_t f =\LL f := -v \cdot \nabla_x f +\nabla_x V(x) \cdot \nabla_v f +\Delta_v f + \hbox{div}_v(v f).
\eeqn
This time we observe that 
\beqn
\nonumber
\quad G=Z^{-1}e^{-W}, \quad W= \frac {v^2} {2} +V(x), \quad Z \in \R_{+},
\eeqn
is an explicit steady state. There are many classical results for this equation on the case $ \gamma \ge 1 $, where there is an exponentially decay. We refer the interested readers to \cite{V, DMS, DMS2, HN, HN2, H, BCG,MM}, and for the weak confinement case $\gamma \in(0,1)$, there are also some polynomial or sub-geometric convergence results proved in \cite{BCG,C,DFG}.  We also emphasize that our results for kinetic Fokker-Planck equation with general force  and confinement are to our knowledge new.
\sk

We carry out all of our proofs using variations of Harris's Theorem for Markov semigroup. Harris's Theorem originated in the paper \cite{H2} where Harris gave conditions for existence
and uniqueness of a steady state for Markov processes. It was then pushed forward by
Meyn and Tweedie in \cite{MT} to show exponential convergence to equilibrium. 
\cite{HM} gives an efficient way of getting quantitative rates for convergence to equilibrium
once   the assumptions have been quantitatively verified.  We give the precise statement in the next section.

\medskip

One advantage of the Harris method is that it directly yields convergence for a  wide range of initial conditions (there are many choice of $m$ and $H$ in Theorem \ref{T12}), while previous proofs of convergence to equilibrium mainly use some strongly weighted $L^2$ or $H^1$ norms (typically with a weight which is the inverse of a Gaussian). The Harris method  also gives existence of stationary solutions under  general conditions; in some cases these are explicit and easy to find, but in other cases such as the two models in our paper they can be nontrivial and non-explicit. Also the Harris method  provides a quantitative rate of convergence to the steady state, which is better than non-quantitative type argument such as Krein Rutman theorem. 
 \medskip
 
Here we briefly introduce the main idea of the paper. The paper uses Harris method to prove convergence. Roughly speaking, Harris method says that Lyapunov function plus positivity condition on a large ball implies $L^1(m)$ convergence for some weight function $m$. The Lyapunov function is easy to find so we mainly prove positivity on a large ball. The proof mainly contains three steps. First we prove that $f$ is above a constant on a point, then we use regularity method to prove the continuity of the solution, thus we can obtain that $f$ is above a constant in a small ball. Finally we use the spreading of positivity lemma which says if 
$f \ge \delta $ in $[0,t) \times B_r(x_0, v_0)$, then $f \ge \delta $ in $f \ge K\delta $ in $[\frac t 2, t) \times B_{\alpha r}(x_0, v_0)$ for any $\alpha>1$ and some $K>0$, we proved the positivity thus the convergence in $L^1(m)$ is proved. We use the Duhamel's formula and regularity estimate to prove convergence $p \in(1, \infty]$. This way of proving convergence for linear models is quite new and have the potential to be extended to other equations where exponential trend to equilibrium has not yet been shown.
 
\medskip

Let us end the introduction by describing the plan of the paper. In Section \ref{sec2}, we introduce Harris Theorem. 
In Section \ref{sec3} we present the proof of a regularization estimate on $S_\LL$. In Section \ref{sec4} we prove the convergence result in $L^1(m)$. In Section \ref{sec5} we prove the theorem in the case of $L^p(m)$ with general $p$. In section \ref{sec6}, we compute the Lyapunov function for the two equations. Finally in Appendix we prove some useful lemmas.

\bigskip
\section{Harris Theorem and existence of steady state}
\label{sec2}
\setcounter{equation}{0}
\setcounter{theo}{0}
In this section we introduce a PDE proof of Harris-Doeblin theorem and the existence of steady state by S. Mischler and J. A. Ca\~nizo.
\begin{theo}\label{T21}
(Harris-Doeblin Theorem)  We consider a Markov semigroup $S_\LL(t)$ with generator $\LL$ and define $S_t:=S_\LL(t)$, we assume that\\
(H1)(Lyapunov condition) There exists some weight function $m: \R^d \to [1, \infty)$ satisfying $m(x) \to \infty $ as $|x| \to \infty$ and there
exist some constants $\alpha > 0, b > 0$ such that
\beqn
\nonumber
\LL^* m \le - \alpha m + b,
\eeqn
(H2)(Harris condition) For any $R > 0$, there exist a constant $T = T(R)> 0$ and a positive, nonzero measure $\mu = \mu(R)$ such that
\beqn
\nonumber
S_T f \ge \mu \int_{B_R}
f, \quad \forall f \in L^1(m), \quad f \ge 0, \quad \Vert f \Vert_{L^1(m)} \le 1,
\eeqn
where  $B_{R}$ denotes the ball centered at origin with radius $R$.  Suppose the Markov semigroup $S_t$ on $L^1(m)$ which satisfies (H1)
and (H2). Then there exist some constants $C \ge 1$ and $a < 0$ such that
\beqn
\nonumber
\Vert S_t f \Vert_{L^1(m)} \le C e^{at} \Vert f \Vert_{L^1(m)},\quad  \forall t \ge 0,  \quad \forall f \in L^1(m),\quad   \MM( f)= 0.
\eeqn
\end{theo}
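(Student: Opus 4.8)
The plan is to run the Doeblin--Harris argument, in the quantitative form due to Hairer--Mattingly, directly at the level of the semigroup. \emph{Step 1 (Foster--Lyapunov bound for $S_t$).} For $f\ge 0$ in $L^1(m)$ set $u(t):=S_tf\ge 0$; using (H1), mass conservation $\int u(t)=\int f$, and a density argument to differentiate under the integral,
\[
\dt\int u(t)\,m \;=\;\int u(t)\,\LL^*m\;\le\;-\alpha\int u(t)\,m+b\int f ,
\]
so Gronwall gives $\|S_tf\|_{L^1(m)}\le e^{-\alpha t}\|f\|_{L^1(m)}+(b/\alpha)\|f\|_{L^1}$ for all $t\ge 0$; splitting $f=f_+-f_-$ and using $|S_tf|\le S_tf_++S_tf_-$ extends this to every $f\in L^1(m)$, and $S_t$ is at the same time an $L^1$-contraction. (Throughout $\|\cdot\|_{L^1}$ is the unweighted norm and $K_0:=b/\alpha$.)

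\emph{Step 2 (coupling estimate from (H2), with a free minorization time).} Fix $R>0$; (H2) gives $T_0=T(R)$ and a nonzero positive measure $\mu$ with $S_{T_0}f\ge\mu\int_{B_R}f$ for all $f\ge 0$ (homogeneity removes the normalisation), where $\eta:=\mu(\R^d)\in(0,1]$. Since $S_t$ is positive and mass-preserving, for \emph{any} $T^*\ge T_0$ one also has $S_{T^*}f=S_{T^*-T_0}(S_{T_0}f)\ge\mu^*\int_{B_R}f$ with $\mu^*:=S_{T^*-T_0}\mu$ of total mass $\eta$. Write $P:=S_{T^*}$. For $f$ with $\MM(f)=0$, normalised so that $\|f\|_{L^1}=1$ (hence $\int f_+=\int f_-=\tfrac12$), the bounds $Pf_\pm\ge\mu^*\int_{B_R}f_\pm$ give $(Pf_+)\wedge(Pf_-)\ge(a\wedge b)\,\mu^*$ with $a:=\int_{B_R}f_+$ and $b:=\int_{B_R}f_-$, whence
\[
\|Pf\|_{L^1}\;=\;1-2\int (Pf_+)\wedge(Pf_-)\;\le\;1-2\eta\,(a\wedge b);
\]
moreover $\int_{B_R^c}f_\pm\le m_R^{-1}\|f\|_{L^1(m)}$ with $m_R:=\inf_{|x|\ge R}m(x)\to\infty$, so $a\wedge b\ge\tfrac12-m_R^{-1}\|f\|_{L^1(m)}$.

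\emph{Step 3 (equivalent norm and dichotomy).} Work with the equivalent norm $\|f\|_\beta:=\|f\|_{L^1}+\beta\|f\|_{L^1(m)}$ and fix the parameters in this order: $A:=\max(8K_0,1)$, which depends only on $b/\alpha$; then $R$ so large that $m_R\ge 4A$; then $T_0=T(R)$ and $\eta=\eta(R)$ are determined, and $T^*\ge\max(T_0,\alpha^{-1}\ln 4)$ so that $\kappa:=e^{-\alpha T^*}\le\tfrac14$; finally $\beta:=(\eta/4)(\kappa A+K_0)^{-1}$ and $\theta:=1-\eta/4\in[\tfrac34,1)$. Let $f$ satisfy $\MM(f)=0$, $\|f\|_{L^1}=1$, and set $s:=\|f\|_{L^1(m)}\ge 1$. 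If $s\le A$, Step~2 gives $a\wedge b\ge\tfrac14$, hence $\|Pf\|_{L^1}\le 1-\tfrac\eta2$ and $\|Pf\|_{L^1(m)}\le\kappa A+K_0$ by Step~1, so $\|Pf\|_\beta\le 1-\tfrac\eta2+\beta(\kappa A+K_0)=\theta\le\theta\|f\|_\beta$. If $s>A$, use only $\|Pf\|_{L^1}\le 1$ and $\|Pf\|_{L^1(m)}\le\kappa s+K_0$; since $s\mapsto(1+\beta\kappa s+\beta K_0)/(1+\beta s)$ is nonincreasing, $\|Pf\|_\beta/\|f\|_\beta$ is at most its value at $s=A$, which equals $\theta$ precisely because $A\ge 8K_0$ and $\kappa\le\tfrac14$ force $A(\theta-2\kappa)\ge 2K_0$. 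In both cases $\|Pf\|_\beta\le\theta\|f\|_\beta$ with $\theta<1$, for every $f$ with $\MM(f)=0$ (homogeneity removes the normalisation).

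\emph{Step 4 (return to continuous time) and the main obstacle.} Iterating gives $\|S_{nT^*}f\|_\beta\le\theta^n\|f\|_\beta$; for $t=nT^*+r$ with $0\le r<T^*$, Step~1 bounds $\|S_r g\|_\beta\le(1+\beta b/\alpha)\|g\|_\beta$, so $\|S_tf\|_\beta\le(1+\beta b/\alpha)\theta^{-1}e^{at}\|f\|_\beta$ with $a:=(\ln\theta)/T^*<0$, and converting via $\beta\|\cdot\|_{L^1(m)}\le\|\cdot\|_\beta\le(1+\beta)\|\cdot\|_{L^1(m)}$ gives $\|S_tf\|_{L^1(m)}\le Ce^{at}\|f\|_{L^1(m)}$ for all $f$ with $\MM(f)=0$, with the explicit $C:=\max\{1,\ \theta^{-1}\beta^{-1}(1+\beta)(1+\beta b/\alpha)\}\ge 1$ (and the normalised steady state $G$ then follows by applying this contraction to $S_tf-S_{t'}f$ with $\MM(f)=1$). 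Steps~1, 2 and 4 are essentially bookkeeping; the real difficulty is Step~3, where one must make both branches of the dichotomy contract with a single $\beta$: a large $\beta$ is forced so that the total-variation gain in the ``concentrated'' case survives in $\|\cdot\|_\beta$, but the same $\beta$ inflates the Lyapunov term in the ``dispersed'' case. The resolution is to pin $A$ to $b/\alpha$ alone, then enlarge the minorization ball, and — the only genuinely non-routine move — to enlarge the minorization time $T^*$ beyond the $T(R)$ supplied by (H2), which is legitimate because $S_t$ preserves mass, so as to drive the discrete Lyapunov rate $\kappa=e^{-\alpha T^*}$ uniformly small and decouple the two constraints.
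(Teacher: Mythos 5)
Your argument is correct and follows the same Hairer--Mattingly contraction-in-an-equivalent-norm route as the paper's proof: Step 1 is the paper's Lyapunov drift estimate (leading to (\ref{E22})), Step 2 is the paper's Lemma \ref{L21} (Doeblin's variant) in a slightly different bookkeeping, Step 3 is the paper's Steps 2--6 (the dichotomy on $\|f\|_{L^1(m)}/\|f\|_{L^1}$ and the tuning of $\beta$), and Step 4 is the iteration and linearity argument.

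One substantive remark: the move you single out as ``the only genuinely non-routine'' one --- enlarging the minorization time from $T(R)$ to $T^*\ge\max(T(R),\alpha^{-1}\ln 4)$ so that $\kappa=e^{-\alpha T^*}\le\tfrac14$ --- is in fact unnecessary, and is an artifact of your parametrization. The paper keeps $T=T(R)$ as given by (H2) and writes the Lyapunov iterate as $\|S_Tf\|_{L^1(m)}\le\gamma\|f\|_{L^1(m)}+K\|f\|_{L^1}$ with $\gamma=e^{-\alpha T}$ and $K=(1-e^{-\alpha T})\,b/\alpha$; since $K$ and $1-\gamma$ then carry the same factor $1-e^{-\alpha T}$, the inequality $K/A\le(1-\gamma)/2$ needed in the ``dispersed'' branch reduces to $m(R)\ge 8b/\alpha$ alone, with no constraint on $T$, and the resulting contraction rate $\gamma_3=(\gamma+1)/2<1$ holds automatically for any $\gamma\in(0,1)$. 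In your normalization you replaced $K=(1-\kappa)K_0$ by its upper bound $K_0$, which severs the link between $K$ and $1-\kappa$ and is what forces you to make $\kappa$ small; restoring $K=(1-\kappa)K_0$ removes the need to enlarge $T^*$. Also, your narrative sentence ``a large $\beta$ is forced so that the total-variation gain in the concentrated case survives'' has the direction of the tension reversed --- the concentrated branch wants $\beta$ \emph{small} (so the $L^1$ Doeblin gain is not drowned by the weighted term), the dispersed branch wants $\beta$ \emph{not too small}; your explicit choice $\beta=(\eta/4)(\kappa A+K_0)^{-1}$ is indeed small, so the arithmetic is fine, only the prose is off.
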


\begin{rem}
Usually the statement of Harris condition do not requires
\beqn
\nonumber
\Vert f \Vert_{L^1(m)} \le 1,
\eeqn
but in fact  conclusion remains the same since for every function $f$ we can consider $\frac 1 \lambda f$ and use the linearity of the solution. This additional assumption will be helpful in Section \ref{sec4}.
\end{rem}
\begin{rem}
In fact this version of Harris-Doeblin Theorem is a little stronger than the version in \cite{MT} because this version do not require a minimum of $T$ for all $R$, in this version it may happen that 
\beqn
\nonumber
T(R) \to 0,  \ \hbox{as} \ R \to \infty,
\eeqn
while in \cite{MT} they require a minimum $t_*>0$ for all $R>0$.
\end{rem}

Before proving the theorem, we first prove a useful lemma.

\begin{lem}\label{L21} (Doeblin's variant). Under assumption (H2), if $f \in  L^1(m)$, with $m(x) \to \infty $ as
$|x| \to \infty$, satisfies
\beqn\label{E21}
\Vert f \Vert _{L^1} \ge \frac {4} {m(R)}
\Vert f \Vert _{L^1(m)},  \quad \MM(f) = 0, \quad \Vert f \Vert_{L^1(m)} \le 1,
\eeqn
we then have
\beqn
\nonumber
\Vert S_T f \Vert _{L^1}  \le (1- \frac {\langle \mu(R) \rangle}{2})\Vert f \Vert _{L^1},
\eeqn
where
\beqn
\nonumber
\langle \mu \rangle = \int_{\R^d} \mu, \quad m(R) := \min \{|m(x)| , x \in B_R^c \},
\eeqn
\end{lem}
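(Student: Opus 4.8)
The plan is to run the classical Doeblin contraction argument, keeping track of the weight $m$. Decompose $f = f_+ - f_-$ into its positive and negative parts. Since $\MM(f) = 0$ we have $\int_{\R^d} f_+ = \int_{\R^d} f_- = \tfrac12 \|f\|_{L^1}$, and since $\|f_+\|_{L^1(m)} + \|f_-\|_{L^1(m)} = \|f\|_{L^1(m)} \le 1$, both $f_+$ and $f_-$ are admissible in the Harris condition (H2); thus $S_T f_\pm \ge \mu(R)\int_{B_R} f_\pm$.

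First I would reduce the statement to a lower bound on the ``overlap mass''. Using linearity of $S_T$, positivity and mass conservation of the Markov semigroup on nonnegative data, and the elementary identity $|a-b| = a+b-2\min(a,b)$ for $a,b \ge 0$,
\[
\|S_T f\|_{L^1} = \|S_T f_+ - S_T f_-\|_{L^1} = \int_{\R^d}\! S_T f_+ + \int_{\R^d}\! S_T f_- - 2\int_{\R^d} \min(S_T f_+, S_T f_-) = \|f\|_{L^1} - 2\int_{\R^d} \min(S_T f_+, S_T f_-).
\]
Hence it suffices to prove $\int_{\R^d} \min(S_T f_+, S_T f_-) \ge \tfrac14\,\langle \mu(R)\rangle\, \|f\|_{L^1}$.

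Next, from (H2) applied to $f_+$ and to $f_-$ separately one gets $\min(S_T f_+, S_T f_-) \ge \mu(R)\,\min\big(\int_{B_R} f_+, \int_{B_R} f_-\big)$, so the overlap mass is at least $\langle\mu(R)\rangle\,\min\big(\int_{B_R} f_+, \int_{B_R} f_-\big)$. It remains to bound $\int_{B_R} f_\pm$ from below, and this is exactly where the weight enters: $\int_{B_R^c} f_\pm \le \tfrac{1}{m(R)} \int_{B_R^c} f_\pm\, m \le \tfrac{1}{m(R)} \|f\|_{L^1(m)}$, while the hypothesis $\|f\|_{L^1} \ge \tfrac{4}{m(R)}\|f\|_{L^1(m)}$ forces $\tfrac{1}{m(R)}\|f\|_{L^1(m)} \le \tfrac14 \|f\|_{L^1}$. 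Therefore $\int_{B_R} f_\pm = \tfrac12\|f\|_{L^1} - \int_{B_R^c} f_\pm \ge \tfrac12\|f\|_{L^1} - \tfrac14\|f\|_{L^1} = \tfrac14\|f\|_{L^1}$, and plugging this into the display above yields $\|S_T f\|_{L^1} \le \big(1 - \tfrac12\langle\mu(R)\rangle\big)\|f\|_{L^1}$.

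I do not expect a genuine obstacle here; the argument is short and the estimates are routine once the splitting is in place. The only point requiring a little care is that $\mu$ is a measure rather than an $L^1$ density, so the inequalities $S_T f_\pm \ge \mu \int_{B_R} f_\pm$ and the taking of the minimum should be read in the sense of measures, exploiting that the measures $S_T f_\pm\, dx$ are absolutely continuous, together with the total-variation identity $\|\nu_1 - \nu_2\|_{TV} = \nu_1(\R^d) + \nu_2(\R^d) - 2(\nu_1 \wedge \nu_2)(\R^d)$. Everything else is bookkeeping with mass conservation, positivity of the semigroup, and the definition of $m(R)$.
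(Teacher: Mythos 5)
Your proposal is correct and follows essentially the same argument as the paper: the key step in both is the lower bound $\int_{B_R} f_\pm \ge \tfrac14\|f\|_{L^1}$ obtained from $\MM(f)=0$ and the weighted hypothesis, combined with (H2) applied to $f_+$ and $f_-$. Your use of the overlap identity $|a-b|=a+b-2\min(a,b)$ is just a rephrasing of the paper's step $|S_Tf|\le|S_Tf_+-\eta|+|S_Tf_--\eta|=S_T|f|-2\eta$ with $\eta=\tfrac{\mu(R)}{4}\int|f|$.
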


\begin{proof} From the hypothesis (\ref{E21}), we have
\bear
\nonumber
\int_{B_R} f_\pm &=& \int_{\R^d} f_\pm - \int_{B_R^c} f_\pm
\\ \nonumber
&\ge& \frac 1 2 \int_{\R^d} |f|- \frac {1} {m(R)} \int_{\R^d} |f| m  \ge \frac 1 4  \int_{\R^d} |f|,
\eear
since
\beqn
\nonumber
\Vert f_{\pm} \Vert_{L^1(m)} \le \Vert f \Vert_{L^1(m)}.
\eeqn
Together with (H2), we get
\beqn
\nonumber
S_T f_\pm \ge \frac {\mu(R)} 4 \int_{\R^d} |f| :=\eta,
\eeqn
We deduce
\beqn
\nonumber
|S_T f| \le |S_T f_+ -  \eta| + |S_T f_- -  \eta| = S_Tf_+ - \eta + S_Tf_- - \eta = S_T |f| - 2\eta,
\eeqn
and next
\beqn
\nonumber
\int_{\R^d} |S_T f| \le \int_{\R^d} S_T|f| - 2\eta = \int_{\R^d} (|f| - \frac {\mu(R)} 2 \int_{\R^d} |f|),
\eeqn
which is nothing but the announced estimate.
\end{proof}

Then we come to the proof of Theorem \ref{T21}.\\
\begin{proof} {\bf Proof of Theorem \ref{T21}}. We split the proof in several steps. In Step 1-6 we will assume $\Vert f_0 \Vert_{L^1(m)} \le 1$.\\
Step 1. We fix $f_0 \in L^1(m), \MM (f)   = 0$, and we denote $f_t := S_t f_0$. From (H1), we have
\beqn
\nonumber
\frac {d} {dt} \Vert f \Vert_{L^1(m)} \le -\alpha \Vert f_t \Vert_{L^1(m)} +b \Vert f_t \Vert_{L^1}, \quad \forall t \ge 0, 
\eeqn
from what we deduce
\beqn
\nonumber
\Vert S_T f \Vert_{L^1(m)} \le e^{-\alpha t} \Vert f_0 \Vert_{L^1(m)} +(1-e^{-\alpha t})\frac b \alpha  \Vert f_0 \Vert_{L^1}, \quad \forall t \ge 0, 
\eeqn
In other words, we have proved
\beqn\label{E22}
\Vert S_T f \Vert_{L^1(m)} \le \gamma \Vert f_0 \Vert_{L^1(m)} +K  \Vert f_0 \Vert_{L^1}, \quad \forall t \ge 0, 
\eeqn
with $\gamma \in (0, 1)$ and $K > 0$. We fix $R > 0$ large enough such that $\frac {8b} \alpha \le m(R)$, then take $T = T(R)$ and $\mu = \mu(R)$, define
\beqn
\nonumber
\gamma :=e^{-\alpha T} , \quad K:=(1-e^{-\alpha T})\frac b \alpha,
\eeqn
Then we have $K/A \le (1 - \gamma)/2$ with $A := m(R)/4$. We also recall that
\beqn\label{E23}
\Vert S_T f \Vert_{L^1} \le  \Vert f_0 \Vert_{L^1} , \quad \forall t \ge 0.
\eeqn
We define
\beqn
\nonumber
\Vert  f \Vert_{\beta} =  \Vert f_0 \Vert_{L^1}+ \beta \Vert f_0 \Vert_{L^1(m)},
\eeqn
and we observe that the following alternative holds
\beqn\label{E24}
\Vert  f_0 \Vert_{L^1(m)} \le A  \Vert f_0 \Vert_{L^1},
\eeqn
or
\beqn\label{E25}
\Vert  f_0 \Vert_{L^1(m)} > A  \Vert f_0 \Vert_{L^1}.
\eeqn
Step 2. By Lemma \ref{L21} that under condition (\ref{E24}), there holds
\beqn\label{E26}
\Vert  S_T f_0 \Vert_{L^1} \le \gamma_1  \Vert f_0 \Vert_{L^1}, \quad \gamma_1 \in (0,1 ),
\eeqn
and more precisely $\gamma_1 := 1 - \langle \mu \rangle/2$, which is nothing but the conclusion of Lemma \ref{L21}.\\
Step 3. We claim that under condition (\ref{E24}), there holds
\beqn\label{E27}
\Vert  S_T f_0 \Vert_{\beta} \le \gamma_2 \Vert f_0 \Vert_{\beta} , \quad \gamma_2 := \max(\frac {\gamma_1+1} {2},\gamma),
\eeqn
for $\beta > 0$ small enough. Indeed, using (\ref{E22}) and (\ref{E27}), we compute
\bear
\nonumber
\Vert  S_Tf_0 \Vert_{\beta} &=&   \Vert S_Tf_0 \Vert_{L^1} + \beta \Vert S_Tf_0 \Vert_{L^1(m)}
\\ \nonumber
&\le& (\gamma_1 +K\beta)  \Vert f_0 \Vert_{L^1} +\gamma \beta  \Vert f_0 \Vert_{L^1(m)},
\eear
and we take $\beta > 0$ such that $\gamma_1 + K\beta \le \gamma_2$.\\
Step 4. We claim that under condition (\ref{E25}), there holds
\beqn\label{E28}
\Vert  S_T f_0 \Vert_{L^1(m)} \le \gamma_3  \Vert f_0 \Vert_{L^1(m)}, \quad \gamma_3 := \frac {\gamma+1} {2}.
\eeqn
Indeed we compute
\beqn
\nonumber
\Vert  S_T f_0 \Vert_{L^1(m)} \le \gamma  \Vert f_0 \Vert_{L^1(m)} + \frac K A \Vert f_0 \Vert_{L^1(m)} = \gamma_3 \Vert f_0 \Vert_{L^1(m)}.
\eeqn
Step 5. We claim that under condition (\ref{E25}), there holds
\beqn\label{E29}
\Vert  S_T f_0 \Vert_{\beta} \le \gamma_4  \Vert f_0 \Vert_{\beta}, \quad \gamma_4 := \frac {\gamma_3+1/\beta} {1+1/\beta}.
\eeqn
Indeed, using (\ref{E23}) and (\ref{E28}), we compute
\bear
\nonumber
\Vert  S_Tf_0 \Vert_{\beta} &=&   \Vert S_Tf_0 \Vert_{L^1} + \beta \Vert S_Tf_0 \Vert_{L^1(m)}
\\ \nonumber
&\le&  \Vert f_0 \Vert_{L^1} +\gamma_3 \beta  \Vert f_0 \Vert_{L^1(m)}
\\ \nonumber
&\le& (1-\epsilon) \Vert f_0 \Vert_{L^1} + (\epsilon + \gamma_3 \beta  )\Vert f_0 \Vert_{L^1(m)},
\eear
and we choose $\epsilon \in (0, 1)$ such that $1 - \epsilon = \epsilon/\beta + \gamma_3$.\\
Step 6. By gathering (\ref{E27}) and (\ref{E29}), we see that we have
\beqn
\nonumber
\Vert  S_T f_0 \Vert_{\beta} \le \gamma_5  \Vert f_0 \Vert_{\beta}, \quad \gamma_5 := \max(\gamma_2, \gamma_4) \in(0,1),
\eeqn
for some well chosen $\beta > 0$. By iteration, we get
\beqn
\nonumber
\Vert  S_{nT} f_0 \Vert_{\beta} \le \gamma_5^n  \Vert f_0 \Vert_{\beta}, 
\eeqn
and we then conclude there exist some constants $C \ge 1$ and $a < 0$ such that
\beqn
\nonumber
\Vert S_t f \Vert_{L^1(m)} \le C e^{at} \Vert f \Vert_{L^1(m)},\quad  \forall t \ge 0,  \quad \forall f \in L^1(m), \Vert  f \Vert_{L^1(m)} \le 1, \quad   \MM( f)= 0.
\eeqn 
Step 7. (Linearity argument) For general $f$, we can always find $\lambda>0$ such that $\Vert \frac 1 \lambda f \Vert_{L^1(m)} \le 1 $, since $S_t$ is linear we have
\beqn
\nonumber
 \Vert S_t f \Vert_{L^1(m)}  = \lambda \Vert S_t( \frac 1 \lambda f )\Vert_{L^1(m)} \le \lambda C e^{at} \Vert \frac 1 \lambda f \Vert_{L^1(m)} =  C e^{at} \Vert  f \Vert_{L^1(m)} ,\quad  \forall t \ge 0, 
\eeqn 
for all  $f \in L^1(m), \quad   \MM( f)= 0.$
\end{proof}

The Lyapunov condition also provides a sufficient condition for the existence of an invariant measure (for the dual semigroup).
\begin{theo}\label{T22}
Any mass conserving positive Markov semigroup ($S_t$) which fulfills the above Lyapunov
condition has at least one invariant borelian measure $G \in M^1(m)$, where $M^1$ is the space of measures.
\end{theo}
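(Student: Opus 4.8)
The plan is a Krylov--Bogoliubov averaging argument, with the Lyapunov condition (H1) of Theorem~\ref{T21} providing the necessary compactness. First I would fix any reference probability density $f_0$ with finite $m$-moment (for instance the normalized indicator of a unit ball) and propagate (H1): using mass conservation $\MM(S_tf_0)=1$, positivity $S_tf_0\ge 0$, and the adjoint identity
\[
\frac{d}{dt}\int (S_tf_0)\,m=\int (S_tf_0)\,\LL^* m\le -\alpha\int (S_tf_0)\,m+b ,
\]
Gr\"onwall's lemma yields $\sup_{t\ge 0}\langle S_tf_0,m\rangle\le M_0:=\max(\langle f_0,m\rangle,b/\alpha)$. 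Consequently the time averages $g_T:=\frac1T\int_0^T S_tf_0\,dt$ are probability measures with $\langle g_T,m\rangle\le M_0$ for every $T>0$.

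Second, since $m$ is continuous with $m(x)\to\infty$ at infinity its sublevel sets $\{m\le R\}$ are compact, and Markov's inequality gives $g_T(\{m>R\})\le M_0/R$ uniformly in $T$, so $\{g_T\}_{T>0}$ is tight. By Prokhorov's theorem I would extract $T_n\to\infty$ and a probability measure $G$ with $g_{T_n}\rightharpoonup G$ narrowly; testing against $m\wedge R\in C_b$ and letting $R\to\infty$ gives $\langle G,m\rangle\le\liminf_n\langle g_{T_n},m\rangle\le M_0$, i.e. $G\in M^1(m)$. To check invariance, for fixed $t>0$ I would write
\[
S_tg_T-g_T=\frac1T\int_0^T\!\big(S_{t+s}f_0-S_sf_0\big)\,ds=\frac1T\Big(\int_T^{T+t}\! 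S_sf_0\,ds-\int_0^t\! S_sf_0\,ds\Big),
\]
whose total variation is $\le 2t/T$ because each $S_sf_0$ is a nonnegative mass-one measure; then for every $\varphi\in C_b$,
\[
\langle S_tG,\varphi\rangle=\lim_n\langle S_tg_{T_n},\varphi\rangle=\lim_n\langle g_{T_n},\varphi\rangle=\langle G,\varphi\rangle ,
\]
so $S_tG=G$ for all $t\ge0$. (Equivalently one may phrase this as a Markov--Kakutani fixed point for the commuting family $\{S_t\}_{t\ge0}$ on the weak-$*$ compact convex set $\{\mu\ge0:\ \MM(\mu)=1,\ \langle\mu,m\rangle\le M_0\}$.)

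The only genuinely non-formal step — and the main obstacle — is the passage to the limit in the invariance relation: it uses that $g_{T_n}\rightharpoonup G$ implies $S_tg_{T_n}\rightharpoonup S_tG$, i.e. a Feller-type continuity of $S_t$ (equivalently, $S_t^\ast C_b\subset C_b$). For an abstract Markov semigroup this has to be added as a hypothesis or verified by hand; for the degenerate kinetic Fokker--Planck semigroups of this paper it is automatic from the hypoelliptic smoothing of the equation ($S_\LL(t)$ is regularizing for $t>0$, cf. Section~\ref{sec3}), and in fact in that setting one can shortcut the whole argument: apply $S_\tau$ (with $\tau>0$ fixed) to the averages $g_T$, use the resulting strong $L^1$ compactness to extract a strongly convergent subsequence, and obtain $G$ directly as an honest (smooth, positive) stationary solution without Prokhorov or Feller. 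All the remaining ingredients — the moment bound, tightness, lower semicontinuity of the $m$-moment, and the $2t/T$ estimate — are routine.
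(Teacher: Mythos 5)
Your Krylov--Bogoliubov averaging argument is correct and standard, but it takes a genuinely different route from the paper's proof. The paper introduces the equivalent norm $|||f|||:=\sup_{t>0}\|S_\LL(t)f\|_{M^1(m)}$ (finite by the same Lyapunov/Gr\"onwall bound you derive in your first step), observes that $S_\LL(t)$ is then a contraction for $|||\cdot|||$, and applies the Markov--Kakutani fixed-point theorem directly to the weak-$*$ compact convex set $\mathbb{K}=\{f\in M^1(m):\,f\ge 0,\ \MM(f)=1,\ |||f|||\le R\}$, whose stability under the semigroup is immediate because contractions preserve balls and mass is conserved --- no time averaging, no Prokhorov extraction, no $2t/T$ estimate. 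You quite correctly flag that your narrow-limit step requires a Feller-type continuity (so that $g_{T_n}\rightharpoonup G$ implies $S_tg_{T_n}\rightharpoonup S_tG$); it is worth noting that the paper's argument faces the same requirement in disguised form, since Markov--Kakutani needs $S_\LL(t)$ to be weak-$*$ continuous on $\mathbb K$, a fact the paper asserts without elaboration. In the concrete kinetic setting both approaches can discharge this via the regularization of Section~\ref{sec3}, and the shortcut you sketch at the end --- apply $S_\tau$ to the averages and use the smoothing estimates to upgrade to strong $L^1$ compactness --- is arguably the cleanest way to do so, and yields a smooth stationary density directly rather than merely a measure.
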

\begin{proof} 
Step 1. We prove that ($S_t$) is a bounded semigroup. For $f_0 \in M^1(m)$, we define $f_t := S_\LL(t)f_0$, and we easily compute
\beqn
\nonumber
\frac d {dt} \int |f_t| m \le \int |f_t|\LL^* m \le\int |f_t| (-am + b).
\eeqn
Using the mass conservation and positivity, integrating the above differential inequality, we get
\bear
\nonumber
\int |f_t| m &\le& e^{-at} \int |f_0| m + \frac {b} a (1 - e^{-at}) \int |f_0|
\\ \nonumber
&\le& max (1, \frac b a) \int |f_0| m, \forall t \ge 0,
\eear
so that $(S_t)$ is bounded in $M^1(m)$.\\
Step 2. We prove the existence of a steady state, more precisely, we start proving that there exists a positive and normalized steady state $G \in M^1(m)$. For the equivalent norm $||| \cdot |||$ defined on $M^1(m)$ by
\bear
\nonumber
|||f||| := \sup_{t>0} \Vert
S_\LL(t) f\Vert_{M^1(m)},
\eear
we have $|||S_\LL(t)f||| \le |||f|||$ for all $t \ge 0$, that is the semigroup $S_\LL$ is a contraction semigroup
on $(M^1(m), ||| \cdot |||)$. There exists $R > 0$ large enough such that the intersection of the closed
hyperplane $ \{ f \in M^1(m); \MM(f)  = 1\}$ and the closed ball of radius $R$ in $(M^1(m), ||| \cdot |||)$ is a
convex, non-empty subset. Then consider the closed, weakly * compact convex set
\bear
\nonumber
\mathbb{K} := \{ f \in M^1(m) ; |||f||| \le R, f \ge 0, \MM( f) = 1 \},
\eear
Since $S_\LL(t)$ is a linear, weakly * continuous, contraction in $(M^1(m), ||| \cdot |||)$ and $\MM( S_\LL(t)  f)  = \MM (f) $ for all $t \ge 0$, we see that $\mathbb {K}$ is stable under the action of the semigroup. Therefore we apply the Markov-Kakutani fixed point theorem and we conclude that there exists $G \in \mathbb{K}$ such that $S_\LL(t)G = G$ for all $t \ge 0$. Therefore we have in particular $G \in D(\LL)$ and $\LL G = 0$. 
\end{proof}

\bigskip
\section{Regularization property of  $S_\LL$}
\label{sec3}
\setcounter{equation}{0}
\setcounter{theo}{0}

The aim of this section is to establish the following regularization property. The proof closely follows the proof of similar results in \cite{H, MM, V}. In the whole section, $m$ and $H$ refers to the one defined in Theorem \ref{T12}.
\begin{theo}\label{T31}
 Consider the weight function $m$  as defined in Theorem \ref{T12} satisfies condition (C1)-(C4), then there exist  $\eta, C  > 0$ such that
\beqn
\nonumber
\Vert S_\LL(t)f \Vert_{L^2(m)} \le  \frac C {t^{\frac {5d+2} {4} } } \Vert f \Vert_{L^1(m)}, \ \ \forall t \in [0,\eta] .
\eeqn
for some weight function $m$. In addition, for any integer $k>0$ there exist some $\alpha(k), C(k)>0$ such that
\beqn
\nonumber
\Vert S_\LL(t)f \Vert_{H^k(m)} \le  \frac C {t^{\alpha} } \Vert f \Vert_{L^1(m)}, \ \ \forall t \in [0,\eta] .
\eeqn
as a consequence we have 
\beqn
\nonumber
\Vert S_\LL(t)f \Vert_{C^{2,\delta}} \le  \frac C {t^{\zeta} } \Vert f \Vert_{L^1(m)}, \ \ \forall t \in [0,\eta] ,
\eeqn
for some $\delta \in (0, 1) , \zeta>0$.
\end{theo}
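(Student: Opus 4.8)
The plan is to follow the hypoelliptic short‑time regularization scheme of \cite{H,MM,V}, the one new point being to carry the weight $m$ through every estimate by means of the structural conditions (C1)--(C4). I would proceed in four steps: first an a priori weighted $L^2$ estimate together with a short‑time gain of $v$‑regularity; then a twisted‑functional estimate producing $L^2(m)\to H^1(m)$ smoothing, iterated to $H^k(m)$; then a Nash‑type inequality to replace the source space $L^2(m)$ by $L^1(m)$; and finally a Sobolev embedding for the $C^{2,\delta}$ conclusion. For the first step, the first identity of Remark \ref{R11} with $f=g$ together with (C2) gives
\begin{equation}
\nonumber
\frac{d}{dt}\Vert f\Vert_{L^2(m)}^2 = -2K\Vert\nabla_v f\Vert_{L^2(m)}^2 + 2\int_{\R^d\times\R^d} f^2\,\phi_2(m)\,m^2 \le -2K\Vert\nabla_v f\Vert_{L^2(m)}^2 - 2C_2\Vert \sqrt{H}\,f\Vert_{L^2(m)}^2 + 2C_3\Vert f\Vert_{L^2(m)}^2 ,
\end{equation}
so $\Vert S_\LL(t)f\Vert_{L^2(m)}\le e^{C_3 t}\Vert f\Vert_{L^2(m)}$ and one also controls $\int_0^t\Vert\nabla_v f\Vert_{L^2(m)}^2$ and $\int_0^t\Vert\sqrt H\,f\Vert_{L^2(m)}^2$ on $[0,\eta]$.

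For the second step, with suitably ordered small constants $a\gg b\gg c>0$ I would introduce the twisted functional
\begin{equation}
\nonumber
\mathcal F(t):=\Vert f\Vert_{L^2(m)}^2 + a t\,\Vert\nabla_v f\Vert_{L^2(m)}^2 + 2 b t^{2}\,(\nabla_v f,\nabla_x f)_{L^2(m)} + c t^{3}\,\Vert\nabla_x f\Vert_{L^2(m)}^2 ,
\end{equation}
and compute $\frac{d}{dt}\mathcal F(t)$ along the flow. The leading commutator $[\nabla_v,-v\cdot\nabla_x]=-\nabla_x$ converts the $v$‑coercivity of the first step into $x$‑coercivity through the cross term (the Hérau--Villani mechanism); the contributions produced by differentiating $m$ are handled by (C4) and by the reserve $-C_2\Vert\sqrt H\,f\Vert^2$ and its gradient analogues coming from (C2), while the terms carrying derivatives of $\Phi$ and of $B$ are absorbed into that same reserve by (C3) with $\epsilon$ taken small. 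This should give $\frac{d}{dt}\mathcal F\le C_0\mathcal F$ on $[0,\eta]$, hence $\mathcal F(t)\le e^{C_0 t}\Vert f\Vert_{L^2(m)}^2$, that is
\begin{equation}
\nonumber
\Vert\nabla_v S_\LL(t)f\Vert_{L^2(m)}\le \frac{C}{t^{1/2}}\Vert f\Vert_{L^2(m)},\qquad \Vert\nabla_x S_\LL(t)f\Vert_{L^2(m)}\le \frac{C}{t^{3/2}}\Vert f\Vert_{L^2(m)} .
\end{equation}
Since $D^k_{x,v}(S_\LL(t)f)$ solves an equation of the same type, the extra terms being commutators with coefficients whose derivatives are again controlled by (C3), iterating the same scheme order by order yields, for each $k\ge1$, constants $\alpha(k),C(k)>0$ with $\Vert S_\LL(t)f\Vert_{H^k(m)}\le C t^{-\alpha(k)}\Vert f\Vert_{L^2(m)}$ on $[0,\eta]$.

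For the third step, by (C1) one has $\Vert S_\LL(t)\Vert_{L^1(m)\to L^1(m)}\le C$ on $[0,\eta]$, and to pass from $L^1(m)$ to $L^2(m)$ I would combine the dissipation of the first step, the $x$‑gain of the second step, and a Nash inequality on $\R^{2d}$ adapted to the $3{:}1$ anisotropy between $x$ and $v$, obtaining for $u(t):=\Vert S_\LL(t)f\Vert_{L^2(m)}^2$ a differential inequality of the type $u'(t)\le -c\,t^{\sigma}\,\Vert f\Vert_{L^1(m)}^{-2\theta}\,u(t)^{1+\theta}+C_0 u(t)$; integrating it from $0$ gives $\Vert S_\LL(t)f\Vert_{L^2(m)}\le C\,t^{-(5d+2)/4}\Vert f\Vert_{L^1(m)}$ on $[0,\eta]$, the precise power reflecting only the bookkeeping of this anisotropic argument and not being claimed optimal. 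Writing $S_\LL(t)=S_\LL(t/2)\,S_\LL(t/2)$ and composing with the second step then upgrades this into $\Vert S_\LL(t)f\Vert_{H^k(m)}\le C\,t^{-\alpha(k)}\Vert f\Vert_{L^1(m)}$ on $[0,\eta]$ for every $k$.

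Finally, choosing $k>d+2+\delta$ so that $H^k(\R^{2d})\hookrightarrow C^{2,\delta}(\R^{2d})$ and using $m\ge1$ gives $\Vert S_\LL(t)f\Vert_{C^{2,\delta}}\le C\Vert S_\LL(t)f\Vert_{H^k(m)}\le C\,t^{-\zeta}\Vert f\Vert_{L^1(m)}$ on $[0,\eta]$, which is the last assertion. I expect the main obstacle to be the weight bookkeeping: every integration by parts and every commutator with $\nabla_{x,v}$ creates zeroth‑order factors ($\nabla m/m$, $\Delta_{x,v}m/m$, $\phi_2(m)$, and derivatives of $\Phi$ and $B$), and the scheme closes precisely because (C2) provides a coercive reserve $-C_2 H$ large enough to swallow all of them through (C3), while (C4) keeps the weight from degrading the $v$‑diffusion; carrying this out consistently at all differentiation orders, and correctly chaining the anisotropic hypoelliptic estimate with the Nash step, is the technical heart of the proof.
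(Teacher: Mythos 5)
Your outline follows the same architecture as the paper: a H\'erau--Villani twisted functional driven by the commutator $[\nabla_v,\,v\cdot\nabla_x]$ to transfer $v$-dissipation into $x$-regularity, with the structural conditions (C2)--(C4) absorbing the weight and coefficient terms (this is Lemma \ref{L31} and Lemma \ref{L32}), a Nash step to pass from $L^1(m)$ to $L^2(m)$, and Sobolev embedding at the end. Your Steps 1, 2 and 4 are in substance the paper's. The point of divergence is Step 3. You propose an anisotropic Nash inequality built around the $t^{1/2}$--$t^{3/2}$ hypoelliptic scaling, feeding into a nonlinear differential inequality $u'\le -c\,t^\sigma\Vert f\Vert_{L^1(m)}^{-2\theta}\,u^{1+\theta}+C_0 u$ that one integrates from $t=0$. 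The paper instead stays with the \emph{isotropic} Nash inequality of Lieb--Loss on $\R^{2d}$ (Lemma \ref{L33}, combined with Lemma \ref{L32} to pass the weight inside), and compensates for the anisotropy on the energy side: it replaces the functional $\FF$ by a variant $\FF^*$ with the stiffer time weights $t^2,t^4,t^6$ (Remark \ref{R41}), whose dissipation $-L(\Vert\nabla_v f\Vert^2_{L^2(m)}+t^4\Vert\nabla_x f\Vert^2_{L^2(m)})$ exactly matches the $t^5$ factor that Young's inequality puts in front of $\Vert\nabla_{x,v}f\Vert_{L^2(m)}^2$ after Nash. The closure is then a purely linear Gr\"onwall argument on the combined functional $\GG(t)=B\Vert f_t\Vert_{L^1(m)}^2+t^Z\FF^*(t,f_t)$ with $Z=1+\tfrac{5d}{2}$, which directly produces the stated exponent $(5d+2)/4=Z/2$. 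Your nonlinear-ODE route is plausible and would yield a comparable polynomial singularity, but it asks more of you (you would have to state and prove the anisotropic Nash inequality and verify that the resulting $(\sigma,\theta)$ integrate consistently), whereas the paper's $\FF^*$--$\GG$ device stays elementary and does the bookkeeping automatically; otherwise the two are functionally interchangeable.
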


We start with some elementary lemmas.
\sk

\begin{lem}\label{L31}
For $f_t =S_\LL(t) f_0$,  define an energy functional
\bear\label{E31}
\nonumber
\FF(t, f_t ) &:= &A \Vert f_t \Vert_{L^2(m)}^2 + at\Vert \nabla_v f_t \Vert_{L^2(m)}^2 
\\ 
&+& 2 c t^2( \nabla_v f_t, \nabla_x f_t )_{L^2(m)} + bt^3 \Vert \nabla_x f_t \Vert_{L^2(m)}^2,
\eear
with $a, b, c >0, c \le \sqrt{ab} $ and $A$ large enough. Then there exist $\eta>0$ such that
\bear
\nonumber
\frac {d } {dt}\FF(t, f_t )\le -L(\Vert \nabla_v f_t \Vert_{L^2(m)}^2 + t^2 \Vert \nabla_x f _t \Vert_{L^2 ( m) }^2 ) +C \Vert f_t \Vert_{L^2(m)}^2 ,
\eear
for all $t \in [0, \eta]$ and some $L>0, C>0$, as a consequence, we have
\beqn
\nonumber
\Vert S_\LL f_0 \Vert_{H^1(m)} \le Ct^{-\frac 3 2} \Vert f_0 \Vert_{L^2(m)},
\eeqn
for all $t \in [0, \eta]$, iterating $k$ times we get
\beqn
\nonumber
\Vert S_\LL f_0 \Vert_{H^k(m)} \le Ct^{-\frac {3k} 2 } \Vert f_0 \Vert_{L^2(m)}.
\eeqn
\end{lem}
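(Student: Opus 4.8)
The plan is to establish the differential inequality for $\FF$ by a twisted (hypoelliptic regularization) energy estimate in the spirit of H\'erau and Villani, the twist being the time weights $1,t,t^2,t^3$ that compensate exactly the order in which regularity is gained first in $v$ and then in $x$. Write $f=f_t=S_\LL(t)f_0$ (all manipulations being justified first on smooth approximations). I would first record the zeroth-order identity: by Remark \ref{R11} with $g=f$,
\[
\dt\Vert f\Vert_{L^2(m)}^2=-2K\Vert\nabla_v f\Vert_{L^2(m)}^2+2\int_{\R^d\times\R^d}f^2\,\phi_2(m)\,m^2 ,
\]
so (C2) gives $\dt\Vert f\Vert_{L^2(m)}^2\le-2K\Vert\nabla_v f\Vert_{L^2(m)}^2-2C_2\int H f^2 m^2+2C_3\Vert f\Vert_{L^2(m)}^2$. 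In particular $\Vert f_t\Vert_{L^2(m)}\le e^{C_3 t}\Vert f_0\Vert_{L^2(m)}$ on $[0,\eta]$, which will bound the source term on the right-hand side of the $\FF$-inequality; moreover the $A$-block of $\FF$ supplies not only a large multiple of $-\Vert\nabla_v f\Vert_{L^2(m)}^2$ but also a large multiple of $-\int H f^2 m^2$, and this term will be needed below.

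Next I would differentiate the equation once in $v$ and once in $x$ and compute $\dt\Vert\nabla_v f\Vert_{L^2(m)}^2$, $\dt(\nabla_v f,\nabla_x f)_{L^2(m)}$ and $\dt\Vert\nabla_x f\Vert_{L^2(m)}^2$ term by term. The algebraic heart is the commutator structure
\[
[\nabla_{v},\LL]=-\nabla_x+R_v,\qquad[\nabla_{x},\LL]=R_x,
\]
where $K\Delta_v$ commutes with $\nabla_x$ and $\nabla_v$, and $R_v,R_x$ are built from first and second derivatives of $\Phi$ and $B$. Each of the three identities then produces: a dissipation $-2K\Vert\nabla_v\nabla_\star f\Vert_{L^2(m)}^2$ and a weight term $-2C_2\int H|\nabla_\star f|^2 m^2+2C_3\Vert\nabla_\star f\Vert_{L^2(m)}^2$ (the computation behind (C2) applied to $\nabla_\star f$ in place of $f$, cf.\ Remark \ref{R11}); the commutator contributions $R_v,R_x$, which by (C3) are bounded by $(C_{n,\e}+\e H)$ times $|f|+|\nabla f|$, so that — choosing $\e$ small — their $\e H$-parts are absorbed by the coercive $\int H|\cdot|^2 m^2$ terms and their $\e H|f|$-parts (coming from second derivatives of $B$) by the $-AC_2\int H f^2 m^2$ term; the term $-\Vert\nabla_x f\Vert_{L^2(m)}^2$ appearing in the cross identity from $[\nabla_v,\LL]=-\nabla_x+\cdots$ paired against $\nabla_x f$, which is the mechanism that recovers the missing $x$-dissipation; and various Cauchy--Schwarz leftovers absorbed by the $\nabla_v\nabla_\star f$ dissipations. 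Condition (C4) enters when integrating the weight against $\Delta_v$ in the derivative identities, to bound $\Delta_{x,v}m/m$ from below.

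I would then form $\FF$ and fix the constants in the order: first $a$ large and $b,c$ in a window with $c\le\sqrt{ab}$ (so $\FF$ is equivalent to $\Vert f\Vert_{L^2(m)}^2+t\Vert\nabla_v f\Vert_{L^2(m)}^2+t^3\Vert\nabla_x f\Vert_{L^2(m)}^2$) and with $2c$ suitably larger than $3b$ (so the $-2ct^2\Vert\nabla_x f\Vert^2$ produced by the cross term beats the $+3bt^2\Vert\nabla_x f\Vert^2$ from $\dt(bt^3\Vert\nabla_x f\Vert^2)$ and the $t^2$-order Cauchy--Schwarz leftovers); then $\e$ small in (C3); then $A$ large so that $-2AK\Vert\nabla_v f\Vert^2$ and $-2AC_2\int H f^2 m^2$ dominate all the remaining $O(1)$ and $O(a)$ errors sitting on $\Vert\nabla_v f\Vert^2$ and $\int H f^2 m^2$; and finally $\eta$ small to kill the $O(\eta)$ leftovers at orders $t^2$ and $t^3$. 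This yields
\[
\dt\FF(t,f_t)\le-L\big(\Vert\nabla_v f_t\Vert_{L^2(m)}^2+t^2\Vert\nabla_x f_t\Vert_{L^2(m)}^2\big)+C\Vert f_t\Vert_{L^2(m)}^2,\qquad t\in[0,\eta].
\]
Since $\FF(0,f_0)=A\Vert f_0\Vert_{L^2(m)}^2$ and $\Vert f_t\Vert_{L^2(m)}\le e^{C_3 t}\Vert f_0\Vert_{L^2(m)}$, integrating on $[0,t]\subset[0,\eta]$ gives $\FF(t,f_t)\la\Vert f_0\Vert_{L^2(m)}^2$; by the equivalence this reads $t\Vert\nabla_v f_t\Vert_{L^2(m)}^2+t^3\Vert\nabla_x f_t\Vert_{L^2(m)}^2\la\Vert f_0\Vert_{L^2(m)}^2$, i.e.\ $\Vert S_\LL(t)f_0\Vert_{H^1(m)}\la t^{-3/2}\Vert f_0\Vert_{L^2(m)}$ on $[0,\eta]$. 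For the $H^k$ bound I would run the same scheme for a functional enhanced with all derivatives up to order $k$ (the commutator structure is identical at each level, and (C3) controls the higher derivatives of $\Phi,B$), or equivalently iterate the $H^1$-gain along $S_\LL(t)=S_\LL(t/k)\cdots S_\LL(t/k)$, obtaining $\Vert S_\LL(t)f_0\Vert_{H^k(m)}\la t^{-3k/2}\Vert f_0\Vert_{L^2(m)}$.

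I expect the main obstacle to be exactly the bookkeeping of the middle step: unlike the classical Gaussian-weight, constant-coefficient case, here $m$ is an exponential and $\Phi,B$ grow, so the terms created by commuting derivatives through $\LL$ and by differentiating the weight carry the unbounded factor $H$; the whole point is that every such term can be dominated by one of the coercive quantities $-\Vert\nabla_v f\Vert_{L^2(m)}^2$, $-\int H f^2 m^2$, $-t^2\Vert\nabla_x f\Vert_{L^2(m)}^2$, and the $-\Vert\nabla_v\nabla_\star f\Vert_{L^2(m)}^2$ dissipations together with their own $-t\int H|\nabla_\star f|^2 m^2$, provided the constants are chosen strictly in the order above — and (C2)--(C4) are precisely the hypotheses engineered to guarantee this. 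The cross-term identity, where a $\phi_2(m)$-type factor (of size comparable to $H$) multiplies a product of first derivatives, is the most delicate place and is where the choice of the weights $at$, $2ct^2$, $bt^3$ and of the smallness of $\eta$ really matters.
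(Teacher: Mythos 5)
Your proposal matches the paper's proof in both structure and substance: the three time-weighted differential identities at orders $1,t,t^2,t^3$, the commutator identities $[\nabla_v,\LL]=-\nabla_x+R_v$ and $[\nabla_x,\LL]=R_x$ (the paper's $(3.2)$ and $(3.3)$), absorption of the $\epsilon H$-terms from (C3) into the coercive $\int H|\cdot|^2 m^2$ pieces supplied by (C2), and the same ordering of constant choices ($c$ relative to $b$, then $ab\gg c^2$, then $A$ large, then $\eta$ small) before integrating $\FF$ and iterating for $H^k$. The only slight inaccuracy is attributing a role to (C4) in this lemma — it is not used here but in Lemma \ref{L32} — which does not affect the argument.
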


\begin{rem}\label{R41}
We need to note here that if we consider 
\bear
\nonumber
\FF^*(t, f_t ) &:= &A \Vert f_t \Vert_{L^2(m)}^2 + at^2\Vert \nabla_v f_t \Vert_{L^2(m)}^2 
\\ \nonumber
&+& 2 c t^4( \nabla_v f_t, \nabla_x f_t )_{L^2(m)} + bt^6 \Vert \nabla_x f_t \Vert_{L^2(m)}^2,
\eear
then by the same proof we have
\bear
\nonumber
\frac {d } {dt}\FF^*(t, f_t )\le -L(\Vert \nabla_v f_t \Vert_{L^2(m)}^2 + t^4 \Vert \nabla_x f _t \Vert_{L^2 ( m) }^2 ) +C \Vert f_t \Vert_{L^2(m)}^2 ,
\eear
for all $t \in [0, \eta]$, for some $L>0, C>0$. This version will be useful in the later proof.
\end{rem}

\begin{proof}
We only prove the case $k=1$, for $k = 2$, one need only replace $f$ by $\partial_{x_i}f$ and $\partial_{v_i} f$, similarly for $k >2$.
\noindent  
First by Theorem \ref{T12} and Remark \ref{R11} we have
\beqn
\nonumber
(f, \LL g )_{L^2(m)}+(g, \LL f)_{L^2(m)} = -2K(\nabla_v f, \nabla_v g)_{L^2(m)} + (f, g \phi_2(m))_{L^2(m)},
\eeqn
for any $f, g \in L^2(m) $, without loss of generality we will assume $K=1$. By condition (C2), we have
\beqn
\nonumber
\frac d {dt} \Vert f \Vert_{L^2 (m)}^2  = (f, \LL f )_{L^2(m)} \le - \Vert \nabla_v f \Vert_{L^2(m)}^2 - C_1 \Vert f \Vert_{L^2(m H^{1/2})}^2 +C_2\Vert f \Vert_{L^2(m)}^2.
\eeqn
We compute
\bear\label{E32}
\partial_{x_i}\LL f &=&  \LL \partial_{x_i} f  +\partial_{x_i }  \Phi(x) \cdot \nabla_x f + \partial_{x_i}B(x, v) \cdot \nabla_v f 
\\ \nonumber
&&+ \partial_{x_i} \textup{div}_x\Phi(x) f+ \partial_{x_i} \textup{div}_v B(x, v) f,
\eear 
by condition (C3)
\beqn
\nonumber
|\partial_{x_i }  \Phi(x) | +|\partial_{x_i}B(x, v)| +| \partial_{x_i} \textup{div}_x\Phi(x) |+| \partial_{x_i} \textup{div}_v B(x, v)  | \le \epsilon H + C ,
\eeqn
for some $C>0$, we have
\bear
\nonumber
&&\frac d {dt} \Vert \partial_{x_i} f \Vert_{L^2 (m)}^2
\\ \nonumber 
&=& (\partial_{x_i} f, \LL \partial_{x_i} f )_{L^2(m)}  + (\partial_{x_i} f, \partial_{x_i }  \Phi(x) \cdot \nabla_x f + \partial_{x_i}B(x, v) \cdot \nabla_v f )_{L^2(m)} 
\\ \nonumber
&&+ (\partial_{x_i} f,  \partial_{x_i} \textup{div}_x\Phi(x) f+ \partial_{x_i} \textup{div}_v B(x, v) f  )_{L^2(m)} 
\\ \nonumber
&\le& - \Vert \nabla_v (\partial_{x_i}f) \Vert_{L^2(m)}^2 - C_1 \Vert \partial_{x_i} f \Vert_{L^2(m H^{1/2})}^2 +C_2 \Vert  \partial_{x_i} f \Vert_{L^2(m)}^2 
\\ \nonumber
&&+ \epsilon (\Vert \nabla_{v} f\Vert_{L^2(m H^{1/2})}^2 + \Vert \nabla_{x} f\Vert_{L^2(m H^{1/2})}^2 +\Vert  f\Vert_{L^2(m H^{1/2})}^2)
\\ \nonumber
&&+ C (\Vert \nabla_{v} f\Vert_{L^2(m)}^2 + \Vert \nabla_{x} f\Vert_{L^2(m)}^2 +\Vert  f\Vert_{L^2(m)}^2).
\eear
Summing over $i=1, 2, 3, ..., n$ , we get
\bear
\nonumber
&&\frac d {dt} \Vert \nabla_{x} f \Vert_{L^2 (m)}^2
\\ \nonumber
&\le& - \sum_{i=1}^n \Vert \nabla_v ( \partial_{x_i} f)\Vert_{L^2(m)}^2 - \frac {C_1} 2 \Vert \nabla_x  f \Vert_{L^2(m H^{1/2})}^2+C\Vert  \nabla_x f \Vert_{L^2(m)}^2
\\ \nonumber
&& + C \Vert \nabla_{v} f \Vert_{L^2(m H^{1/2})}^2+C\Vert  f\Vert_{L^2(m H^{1/2})}^2,
\eear
for some $C > 0$. Similarly using
\beqn\label{E33}
\partial_{v_i}\LL f=\LL\partial_{v_i} f -\partial_{x_i}f +\partial_{v_i}B(x, v) \cdot \nabla_v f + \partial_{v_i} \textup{div}_v B(x, v) f,
\eeqn
and since
\beqn
\nonumber
|\partial_{v_i}B(x, v)| +| \partial_{v_i} \textup{div}_v B(x, v)  | \le \epsilon H + C,
\eeqn
we have
\bear
\nonumber
&&\frac d {dt} \Vert \partial_{v_i} f \Vert_{L^2 (m)}^2
\\ \nonumber &=& (\partial_{v_i} f, \LL \partial_{v_i} f )_{L^2(m)}  - (\partial_{x_i} f, \partial_{v_i} f )_{L^2(m)} +(\partial_{v_i} f, \partial_{v_i}B(x, v) \cdot \nabla_v f )_{L^2(m)} 
\\ \nonumber
&&+(\partial_{v_i} f, \partial_{v_i} \textup{div}_v B(x, v) f )_{L^2(m)} 
\\ \nonumber
&\le& - \Vert \nabla_v (\partial_{v_i}f) \Vert_{L^2(m)}^2 - C_1 \Vert \partial_{v_i} f \Vert_{L^2(m H^{1/2})}^2 +C_2\Vert  \partial_{v_i} f \Vert_{L^2(m)}^2 +\epsilon \Vert \nabla_{v} f \Vert_{L^2(m H^{1/2})}^2
\\ \nonumber
&&+C \Vert \nabla_{v} f \Vert_{L^2(m)}^2-(\partial_{x_i} f, \partial_{v_i} f )_{L^2(m)}+ C \Vert  f \Vert_{L^2(m H^{1/2})}^2.
\eear
Summing over $i=1, 2, ..., n$ we get
\bear
\nonumber
&&\frac d {dt} \Vert \nabla_{v} f \Vert_{L^2 (m)}^2
\\ \nonumber
&\le& - \sum_{i=1}^n \Vert \nabla_v (\partial_{v_i}f) \Vert_{L^2(m)}^2 -  \frac {C_1} {2} \Vert \nabla_{v} f \Vert_{L^2(m H^{1/2})}^2+ C \Vert  f \Vert_{L^2(m H^{1/2})}^2.
\\ \nonumber
&&+C \Vert \nabla_{v} f \Vert_{L^2(m)}^2 - (\nabla_v f , \nabla_x f )_{L^2(m)}.
\eear
For the crossing term, using (\ref{E32}), (\ref{E33}) and condition (C2) and (C3), we have
\bear
\nonumber
&&\frac d {dt} 2(\partial_{v_i} f, \partial_{x_i} f )_{L^2 (m)}  
\\ \nonumber 
&=& (\partial_{v_i} f, \LL \partial_{x_i} f )_{L^2(m)}   + (\partial_{v_i} f, \partial_{x_i }  \Phi(x) \cdot \nabla_x f + \partial_{x_i}B(x, v) \cdot \nabla_v f )_{L^2(m)} 
\\ \nonumber
&&+ (\partial_{v_i} f,  \partial_{x_i} \textup{div}_x\Phi(x) f+ \partial_{x_i} \textup{div}_v B(x, v) f  )_{L^2(m)} 
\\ \nonumber 
&&+ (\partial_{x_i} f, \LL \partial_{v_i} f )_{L^2(m)} - (\partial_{x_i} f, \partial_{x_i} f )_{L^2(m)} +(\partial_{x_i} f, \partial_{v_i}B(x, v) \cdot \nabla_v f )_{L^2(m)} 
\\ \nonumber
&&+(\partial_{x_i} f, \partial_{v_i} \textup{div}_v B(x, v) f )_{L^2(m)},
\eear
We split into two parts, for the first part we compute
\bear
\nonumber
&&(\partial_{v_i} f, \LL \partial_{x_i} f )_{L^2(m)} + (\partial_{x_i} f, \LL \partial_{v_i} f )_{L^2(m)} - \Vert \partial_{x_i} f \Vert_{L^2(m )}^2
\\ \nonumber
&=&-2 (\nabla_v (\partial_{x_i} f), \nabla( \partial_{v_i} f) )_{L^2(m)}  +(\partial_{x_i} f, \phi_2(m) \partial_{v_i} f )_{L^2(m)} - \Vert \partial_{x_i} f \Vert_{L^2(m )}^2
\\ \nonumber
&\le& -2 (\nabla_v (\partial_{x_i} f), \nabla( \partial_{v_i} f) )_{L^2(m)}  - \Vert \partial_{x_i} f \Vert_{L^2(m )}^2 +C ( |\nabla_{v} f|, |\nabla_x f| )_{L^2(m H^{1/2})},
\eear
for the second part we have
\bear
\nonumber
&&  (\partial_{v_i} f, \partial_{x_i }  \Phi(x) \cdot \nabla_x f + \partial_{x_i}B(x, v) \cdot \nabla_v f )_{L^2(m)} +(\partial_{x_i} f, \partial_{v_i}B(x, v) \cdot \nabla_v f )_{L^2(m)}
\\ \nonumber 
&&+ (\partial_{v_i} f,  \partial_{x_i} \textup{div}_x\Phi(x) f+ \partial_{x_i} \textup{div}_v B(x, v) f  )_{L^2(m)}   +(\partial_{x_i} f, \partial_{v_i} \textup{div}_v B(x, v) f )_{L^2(m)} 
\\ \nonumber
&\le&C \Vert \nabla_{v} f \Vert_{L^2(m H^{1/2})}^2+C ( |\nabla_{v} f|, |\nabla_x f| )_{L^2(m H^{1/2})}  +C ( | f|, |\nabla_x f| )_{L^2(m H^{1/2})}
\\ \nonumber
&&+C  \Vert f \Vert_{L^2(m H^{1/2})}^2.
\eear
Gathering the two terms, and summing over $i$ we get
\bear
\nonumber
&&\frac d {dt} 2(\nabla_{v} f, \nabla_{x} f )_{L^2 (m)}  
\\ \nonumber
&\le&  -2 \sum_{i=1}^n (\nabla_v (\partial_{x_i} f), \nabla( \partial_{v_i} f) )_{L^2(m)}  - \Vert \nabla_{x} f \Vert_{L^2(m )}^2 +C \Vert \nabla_{v} f \Vert_{L^2(m H^{1/2})}^2
\\ \nonumber
&+& C ( |\nabla_{v} f|, |\nabla_x f| )_{L^2(m H^{1/2})} + C ( | f|, |\nabla_x f| )_{L^2(m H^{1/2})} + C \Vert  f \Vert_{L^2(m H^{1/2})}^2.
\eear
For the very definition of $\FF$ in (\ref{E31}), we easily compute
\bear
\nonumber
\frac d {dt} \FF(t, f_t) &=& A \frac d {dt} \Vert f_t \Vert_{L^2(m)}^2 + at   \frac d {dt} \Vert \nabla_v f_t \Vert_{L^2(m)}^2 +2 c t^2 \frac d {dt} ( \nabla_v f_t, \nabla_x f_t)_{L^2(m)} 
\\ \nonumber
&&+ bt^3 \frac d {dt} \Vert \nabla_x f_t \Vert_{L^2(m)}^2+  a  \Vert \nabla_v f_t \Vert_{L^2(m)}^2 +  4c t ( \nabla_v f_t, \nabla_x f_t )_{L^2(m)} 
\\ \nonumber
&&+ 3 b t^2 \Vert \nabla_x f_t \Vert_{L^2(m)}^2.
\eear
Gathering all the inequalities above together, we have
\beqn
\nonumber
\frac d {dt} \FF(t, f_t) \le T_1 + T_2 + T_3,
\eeqn
with 
\bear
\nonumber
T_1&=&  (a- A+ Cat )\Vert \nabla_v f_t \Vert_{L^2(m)}^2+ ( 3bt^2-   c  t^2+C b t^3)\Vert \nabla_x f_t \Vert_{L^2(m)}^2
\\ \nonumber
&+& (4c t - at ) ( \nabla_{v} f_t , \nabla_{x} f_t )_{L^2(m)}+C_2A\Vert  f_t \Vert_{L^2(m)}^2
\\ \nonumber 
&\le& -L(\Vert \nabla_v f_t \Vert_{L^2(m)}^2 + t^2 \Vert \nabla_x f _t \Vert_{L^2 ( m) }^2) +C\Vert f_t \Vert_{L^2(m)}^2,
\eear
for some $L, C  > 0$, if $c >6b$, $A \gg a, b, c$  and $0 < \eta$ small. For the term $T_2$ we have
\bear
\nonumber
T_2 &=&\sum_{i=1}^d [ -a t \Vert  \nabla_{v}  ( \partial_{v_i} f_t ) \Vert_{L^2(m)}^2 -b t^3 \Vert \nabla_{v}  ( \partial_{x_i}  f_t  )\Vert_{L^2(m)}^2 
\\ \nonumber
&&- 2 c t^2 (\nabla_v ( \partial_{x_i}f_t  ) , \nabla_v ( \partial_{v_i}f_t  ) )_{L^2(m)} ] \le 0,
\eear
since
\bear
\nonumber
&&|2 c t^2 (\nabla_v ( \partial_{x_i}f_t  ) , \nabla_v ( \partial_{v_i}f_t ) )_{L^2(m)}  |
\\ \nonumber
&\le& a t \Vert  \nabla_{v}  ( \partial_{v_i} f_t ) \Vert_{L^2(m)}^2 +b t^3  \Vert  \nabla_{v}  ( \partial_{v_i} f_t ) \Vert_{L^2(m)}^2,
\eear
by our choice on $a, b, c$. For the term $T_3$
\bear
\nonumber
T_3 &=& - \frac {C_1} 2  b t^3 \Vert \nabla_x  f_t \Vert_{L^2(m H^{1/2})}^2 + \left(-\frac {C_1} 2 at + C bt^3 +C c t^2\right)  \Vert \nabla_{v} f_t \Vert_{L^2(m H^{1/2})}^2
\\ \nonumber
&& + Cc  t^2  ( |\nabla_{v} f_t|, |\nabla_x f_t| )_{L^2(m H^{1/2})} + C c t^2 ( | f|, |\nabla_x f| )_{L^2(m H^{1/2})} 
\\ \nonumber
&& + (-C_1 A +C b t^3+C a t+C c t^2 )\Vert  f_t \Vert_{L^2(m H^{1/2})}^2 \le 0,
\eear
by taking $A \gg a, b, c$, $ab \gg c^2$. So by taking $A $ large and $0 < \eta$ small ($t \in [0, \eta]$), we conclude to
\bear
\nonumber
\frac d {dt} \FF(t, f_t)\le -L(\Vert \nabla_v f_t \Vert_{L^2(m)}^2 + t^2 \Vert \nabla_x f _t \Vert_{L^2 ( m) }^2) +C\Vert f_t \Vert_{L^2(m)}^2,
\eear
for some $L, C  > 0$, and that ends the proof.
\end{proof}

\begin{lem}\label{L32} We have
\bear
\nonumber
\Vert \nabla_{x, v} (f_tm) \Vert_{L^2}^2 \le\Vert \nabla_{x, v} f_t \Vert_{L^2(m)}^2+C \Vert f_t \Vert_{L^2(m)}^2,
\eear
for some constant $C$.
\end{lem}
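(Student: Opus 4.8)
The claim is an integration-by-parts identity combined with condition (C4). First I would apply the Leibniz rule $\nabla_{x,v}(f_t m) = m\,\nabla_{x,v} f_t + f_t\,\nabla_{x,v} m$, so that, writing all integrals over $\R^d\times\R^d$,
\[
\Vert \nabla_{x,v}(f_t m)\Vert_{L^2}^2 = \int |\nabla_{x,v} f_t|^2\, m^2 \;+\; 2\int m\, f_t\, \nabla_{x,v} f_t \cdot \nabla_{x,v} m \;+\; \int f_t^2\, |\nabla_{x,v} m|^2 .
\]
The first term on the right is exactly $\Vert \nabla_{x,v} f_t\Vert_{L^2(m)}^2$, so only the remaining two need to be absorbed into $C\Vert f_t\Vert_{L^2(m)}^2$.

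For the cross term I would write $m\, f_t\, \nabla f_t\cdot\nabla m = \tfrac12\, m\,\nabla(f_t^2)\cdot\nabla m$ and integrate by parts in $(x,v)$:
\[
2\int m\, f_t\, \nabla_{x,v} f_t\cdot\nabla_{x,v} m = -\int f_t^2\,\mathrm{div}_{x,v}\!\big(m\,\nabla_{x,v} m\big) = -\int f_t^2\, |\nabla_{x,v} m|^2 \;-\; \int f_t^2\, m\,\Delta_{x,v} m .
\]
Adding this to the expansion above, the two $|\nabla_{x,v} m|^2$ contributions cancel and one is left with the exact identity
\[
\Vert \nabla_{x,v}(f_t m)\Vert_{L^2}^2 = \Vert \nabla_{x,v} f_t\Vert_{L^2(m)}^2 \;-\; \int f_t^2\, m^2\,\frac{\Delta_{x,v} m}{m}.
\]
Condition (C4) gives $\Delta_{x,v} m/m \ge -C_4$, hence $-\int f_t^2\, m^2\,(\Delta_{x,v} m/m) \le C_4\int f_t^2\, m^2 = C_4\Vert f_t\Vert_{L^2(m)}^2$, which yields the lemma with $C=C_4$.

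The only delicate point is justifying the integration by parts, i.e. the vanishing of the boundary contributions at infinity, which cannot be read off from a crude bound on $|\nabla m|/m$ since $m$ is exponentially growing. Here I would use that for $t>0$ the function $f_t = S_\LL(t)f_0$ is smooth and, together with its derivatives, lies in every weighted Sobolev space $H^k(m)$ by Lemma \ref{L31} (so in particular it decays fast relative to $m$); then a standard cutoff argument — multiply by $\chi(\cdot/n)$ with $\chi\in C_c^\infty$ equal to $1$ near the origin, integrate by parts, and let $n\to\infty$, the commutator terms being $O(n^{-1})$ in the relevant weighted norm — removes the boundary terms. I do not expect a genuine obstacle: the lemma is essentially the bookkeeping identity above together with (C4).
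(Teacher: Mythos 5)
Your proof is correct and is essentially identical to the paper's: both expand $\nabla_{x,v}(f_t m)$ by Leibniz, integrate the cross term by parts so that the $|\nabla_{x,v}m|^2$ contributions cancel and one is left with $-\int f_t^2\, m\,\Delta_{x,v} m$, and then invoke condition (C4). Your additional remark about justifying the integration by parts via a cutoff argument is a sound (and welcome) elaboration that the paper omits.
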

\begin{proof}
\noindent
We have
\bear
\nonumber
\Vert \nabla_{x, v} (f_tm) \Vert_{L^2}^2 &=& \Vert m  \nabla_{x, v} f_t \Vert_{L^2}^2+\Vert f_t\nabla_{x, v} m \Vert_{L^2}^2 + 2(f_t\nabla_{x, v}m,m\nabla_{x, v}f_t )_{L^2}
\\ \nonumber
&=& \Vert  \nabla_{x, v} f_t \Vert_{L^2(m)}^2+\Vert f_t\nabla_{x, v} m \Vert_{L^2}^2 -\frac 1 2(f_t^2,\Delta_{x, v}(m^2) )_{L^2}
\\ \nonumber
&=&  \Vert  \nabla_{x, v} f_t \Vert_{L^2(m)}^2 +(f_t^2, |\nabla_{x, v}m|^2-\frac 1 2\Delta_{x, v}(m^2) )_{L^2}
\\ \nonumber
&=& \Vert  \nabla_{x, v} f_t \Vert_{L^2(m)}^2 - (f_t^2, m\Delta_{x, v}m )_{L^2},
\eear
by condition (C4)
\beqn
\nonumber
\frac { \Delta_{x, v} m}{m}    \ge C,
\eeqn
for some constant $C$, we are done. 
\end{proof}

\begin{lem}\label{L33}
Nash's inequality: for any $f \in L^1(\R^d) \cap H^{1}(\R^d) $,there exist a constant $C_d$ such that:
\beqn
\nonumber
\Vert f \Vert_{L^2}^{1+\frac 2 d} \le C_d \Vert f \Vert_{L^1}^{\frac  2 d}\Vert \nabla_v f \Vert_{L^2}.
\eeqn
 \end{lem}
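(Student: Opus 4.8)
The plan is to establish Nash's inequality by the classical Fourier-analytic argument, which has the advantage of yielding an explicit dimensional constant and of working uniformly in $d\ge 1$. Write $\widehat{f}$ for the Fourier transform of $f$ on $\R^d$ (here $\nabla_v$ denotes the full gradient on $\R^d$, the underlying variable being written $v$), normalized so that Plancherel's identity gives $\Vert f \Vert_{L^2}^2 = \Vert \widehat{f}\Vert_{L^2}^2$ and so that $\Vert \nabla_v f\Vert_{L^2}^2 = c_d \int_{\R^d} |\xi|^2 |\widehat{f}(\xi)|^2 \dd\xi$ for the corresponding constant $c_d>0$. The one structural fact needed is the elementary estimate $\Vert \widehat{f}\Vert_{L^\infty} \le \Vert f\Vert_{L^1}$.

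First I would split the frequency integral at a radius $R>0$ to be chosen later:
$$ \Vert f\Vert_{L^2}^2 \;=\; \int_{|\xi|\le R} |\widehat{f}(\xi)|^2\,\dd\xi \;+\; \int_{|\xi|>R} |\widehat{f}(\xi)|^2\,\dd\xi . $$
On the low-frequency part I bound $|\widehat{f}(\xi)|^2 \le \Vert f\Vert_{L^1}^2$ and integrate over the ball, producing $\omega_d R^d \Vert f\Vert_{L^1}^2$ with $\omega_d := |B_1|$. On the high-frequency part I insert $1 \le |\xi|^2/R^2$ and extend the integral to all of $\R^d$, producing $(c_d R^2)^{-1}\Vert \nabla_v f\Vert_{L^2}^2$. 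This yields
$$ \Vert f\Vert_{L^2}^2 \;\le\; \omega_d\, R^d\, \Vert f\Vert_{L^1}^2 \;+\; \frac{1}{c_d\,R^2}\,\Vert \nabla_v f\Vert_{L^2}^2 \qquad \text{for every } R>0. $$

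Next I would optimize the right-hand side over $R$. Writing it as $aR^d + bR^{-2}$ with $a=\omega_d\Vert f\Vert_{L^1}^2$ and $b=c_d^{-1}\Vert \nabla_v f\Vert_{L^2}^2$, the minimum is attained at $R^{d+2} = 2b/(da)$, and substituting this value gives a bound of the form $C_d'\, a^{2/(d+2)} b^{d/(d+2)} = C_d''\,\Vert f\Vert_{L^1}^{4/(d+2)}\,\Vert \nabla_v f\Vert_{L^2}^{2d/(d+2)}$. Raising both sides to the power $(d+2)/(2d)$ and using $(d+2)/d = 1 + 2/d$ gives exactly $\Vert f\Vert_{L^2}^{1+2/d} \le C_d\,\Vert f\Vert_{L^1}^{2/d}\,\Vert \nabla_v f\Vert_{L^2}$, with $C_d$ computable from $\omega_d$, $c_d$ and $d$.

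There is no real obstacle: the whole argument rests only on Plancherel and the $L^1$--$L^\infty$ bound on the Fourier transform, and the remainder is a one-variable minimization. The only mild bookkeeping is fixing the normalization constant $c_d$ relating $\int_{\R^d}|\xi|^2|\widehat{f}|^2\,\dd\xi$ to $\Vert \nabla_v f\Vert_{L^2}^2$ under the chosen Fourier convention. If one is content with the inequality without an explicit constant, an alternative is to interpolate $\Vert f\Vert_{L^2}\le\Vert f\Vert_{L^1}^{2/(d+2)}\Vert f\Vert_{L^{2d/(d-2)}}^{d/(d+2)}$ by H\"older against the Sobolev embedding $\Vert f\Vert_{L^{2d/(d-2)}}\lesssim\Vert\nabla_v f\Vert_{L^2}$ when $d\ge3$, treating $d=1,2$ separately; the Fourier route, however, avoids this case split and is the one I would write up.
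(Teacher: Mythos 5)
Your proof is correct, and in fact the paper does not prove this lemma at all: it simply cites Lieb--Loss, Section~8.13, where the argument given is precisely the Fourier-splitting one you wrote out (bound $|\widehat f|$ by $\Vert f\Vert_{L^1}$ on low frequencies, insert $|\xi|^2/R^2$ on high frequencies, optimize over the radius $R$). So your write-up supplies the standard proof the paper delegates to a reference, and the one-variable optimization and exponents all check out.
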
 
\noindent For the proof  we refer to \cite{LL}, Section 8.13 for instance.
\qed

\begin{lem}\label{L34} There exist $\lambda>0$ such that
\bear\label{E34}
\frac d {dt } \Vert f \Vert_{L^1(m)}\le \lambda   \Vert f \Vert_{L^1(m)},
\eear
which implies
\bear
\nonumber
\Vert f_t \Vert_{L^1(m)} \le Ce^{\lambda t} \Vert f_0\Vert_{L^1(m)}.
\eear
In particular we have
\bear\label{E35}
\Vert f_t \Vert_{L^1(m)} \le C \Vert f_0\Vert_{L^1(m)},\quad \forall t \in [0,\eta],
\eear
for some constant $C>0$.
\end{lem}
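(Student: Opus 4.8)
The plan is to obtain (\ref{E34}) from a one-line Gr\"onwall estimate driven by the Lyapunov condition (C1), and then to integrate it to get (\ref{E35}). First I would turn (C1) into a purely multiplicative bound on $\LL^* m$: since the weight satisfies $m \ge 1$ and $b > 0$, condition (C1) gives
\[
\LL^* m \le -\alpha m + b \le b \le b\,m ,
\]
so $\LL^* m \le \lambda m$ with the choice $\lambda := b > 0$ (any $\lambda \ge \max(b-\alpha,0)$ works equally well).

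Next I would differentiate $t \mapsto \Vert f_t\Vert_{L^1(m)} = \int_{\R^d\times\R^d} |f_t|\,m$ along the flow $f_t = S_\LL(t) f_0$. Since $\LL$ has the conservative form $\LL g = \textup{div}_x(A g) + \textup{div}_v(B g) + K\Delta_v g$, Kato's inequality yields $\sign(f_t)\,\LL f_t \le \LL |f_t|$ in the sense of distributions (the first-order transport part contributes an equality, while $\sign(f_t)\,\Delta_v f_t \le \Delta_v |f_t|$ handles the diffusion part). Pairing with the weight $m$ and using the previous step,
\[
\frac{d}{dt}\Vert f_t\Vert_{L^1(m)} = \int_{\R^d\times\R^d}\sign(f_t)\,(\LL f_t)\,m \;\le\; \int_{\R^d\times\R^d}(\LL|f_t|)\,m = \int_{\R^d\times\R^d}|f_t|\,\LL^* m \;\le\; \lambda\,\Vert f_t\Vert_{L^1(m)} ,
\]
which is exactly (\ref{E34}). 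Gr\"onwall's lemma then gives $\Vert f_t\Vert_{L^1(m)} \le e^{\lambda t}\Vert f_0\Vert_{L^1(m)}$ for all $t \ge 0$ (so the constant $C$ in the stated estimate may in fact be taken equal to $1$), and restricting to $t \in [0,\eta]$ yields (\ref{E35}) with $C := e^{\lambda\eta}$.

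The only delicate point, and the only place requiring more than a formal computation, is the rigorous justification of Kato's inequality and of differentiating under the integral sign, since $\sign$ is not smooth and $f_t$ is only a mild/weak solution. I would handle it in one of the two standard ways: either regularize the absolute value, replacing $|s|$ by $\sqrt{s^2+\delta^2}-\delta$, run the computation with this smooth convex function, and let $\delta \to 0$ (dominated convergence, with (C1) controlling the limiting integrand); or exploit that $S_\LL$ is a positive Markov semigroup, so that $|f_t| = |S_\LL(t) f_0| \le S_\LL(t)|f_0|$ and hence, by duality and positivity of the dual semigroup,
\[
\Vert f_t\Vert_{L^1(m)} \le \int_{\R^d\times\R^d} |f_0|\,S_\LL^*(t) m \;\le\; e^{\lambda t}\Vert f_0\Vert_{L^1(m)},
\]
the last inequality following from $\frac{d}{dt}S_\LL^*(t) m = S_\LL^*(t)\LL^* m \le \lambda\,S_\LL^*(t) m$. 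This is precisely the estimate already used in Step 1 of the proof of Theorem \ref{T22}, so no new ingredient is needed.
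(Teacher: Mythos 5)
Your proposal is correct and follows the same route as the paper, which simply declares the estimate "an immediate consequence of the Lyapunov condition (C1)." You supply the details the paper omits: the conversion of $\LL^*m \le -\alpha m + b$ into a multiplicative bound $\LL^*m \le \lambda m$ via $m\ge 1$, the Kato-type inequality $\sign(f)\,\LL f \le \LL|f|$ (equality on the first-order transport terms, inequality on $\Delta_v$), and the duality step $\int(\LL|f|)m=\int|f|\,\LL^*m$. Both of your proposed justifications of the formal computation are standard and consistent with what the paper itself invokes in Step~1 of Theorem~\ref{T22}. One small remark: that same step of Theorem~\ref{T22} actually yields the stronger, time-uniform bound $\Vert f_t\Vert_{L^1(m)}\le\max(1,b/\alpha)\Vert f_0\Vert_{L^1(m)}$ by combining $\LL^*m\le-\alpha m+b$ with the $L^1$-contraction $\Vert f_t\Vert_{L^1}\le\Vert f_0\Vert_{L^1}$, whereas your argument deliberately gives only exponential growth; since Lemma~\ref{L34} is only applied on the fixed interval $[0,\eta]$, this loss is harmless.
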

\begin{proof}
It' s an immediate consequence of the Lyapunov condition (C1).
\end{proof}

\sk

Now we come to the proof of Theorem \ref{T31}.

\sk

\begin{proof}
\noindent ({\bf Proof of Theorem \ref{T31}.}) We define
\beqn
\nonumber
\GG(t, f_t)=B \Vert  f_t \Vert_{L^1(m)}^2 + t^Z \FF^* (t, f_t),
\eeqn
with $B, Z > 0 $ to be fixed and $\FF^*$ defined in Remark \ref{R41}. We choose $t \in [0, \eta],$  $\eta$ small enough such that $(a+b+c) Z\eta^{Z+1} \le \frac 1 2 L \eta^Z$ ($a, b, c, L$ are also defined Remark \ref{R41}). By  (\ref{E34}) and Remark \ref{R41}, we have
\bear
\nonumber
\frac  d {dt} \GG(t, f_t) &\le& \lambda B \Vert f_t \Vert_{L^1(m)}^2 +Zt^{Z-1} \FF^*(t, f_t) 
\\ \nonumber
&&- L t^Z(\Vert \nabla_v f_t \Vert_{L^2(m)}^2 + t^4 \Vert \nabla_x f _t \Vert_{L^2 ( m ) }^2 )+Ct^Z \Vert  f_t \Vert_{L^2(m)}^2 
\\ \nonumber
&\le&  \lambda B \Vert f_t \Vert_{L^1(m ) }^2  +Ct^{Z-1}\Vert  f_t \Vert_{L^2(m)}^2 
\\ \nonumber
&&- \frac L 2 t^Z(\Vert \nabla_v f_t \Vert_{L^2(m)}^2 + t^4 \Vert \nabla_x f _t \Vert_{L^2 ( m) }^2 ),
\eear
where $\lambda$ is defined in Lemma \ref{L34}. Nash's inequality and Lemma \ref{L32} imply
\bear
\nonumber
\Vert f_tm \Vert_{L^2} \le  C\Vert f m \Vert_{L^1}^{\frac 2 {d+2}} \Vert \nabla_{x, v}(f_tm)\Vert_{L^2}^{\frac d {d+2}}  \le  C\Vert f_t m \Vert_{L^1}^{\frac 2 {d+2}} (\Vert \nabla_{x, v} f m\Vert_{L^2} +C\Vert f_t m\Vert_{L^2} )^{\frac d {d+2}}.
\eear
Using Young's inequality, we have
\beqn
\nonumber
\Vert f_t \Vert_{L^2( m )}^2 \le C_{\epsilon} t^{-\frac 5 2 d} \Vert f \Vert_{L^1(m )}^2 + \epsilon t^5 (\Vert \nabla_{x, v} f_t \Vert_{L^2( m )}^2 + C\Vert  f_t \Vert_{L^2(m)}^2).
\eeqn
Taking $\epsilon$ small such that $C \epsilon \eta^3 \le  \frac 1 2$, we deduce
\beqn
\nonumber
\Vert f_t \Vert_{L^2(m)}^2 \le 2C_{\epsilon} t^{-\frac 5 2 d} \Vert f \Vert_{L^1(m)}^2 + 2\epsilon t^5 \Vert \nabla_{x, v} f_t \Vert_{L^2(m)}^2 .
\eeqn
Taking $\epsilon$ small we have
\beqn
\nonumber
\frac d {dt} \GG(t, f_t) \le \lambda B \Vert f_t \Vert_{L^1(m)}^2+ C_1t^{Z-1-\frac 5 2d} \Vert f_t \Vert_{L^1(m)}^2,
\eeqn
for some $C_1>0$. Choosing $Z=1+\frac 5 2 d$, and using (\ref{E35}), we deduce
\beqn
\nonumber
\forall t \in [0, \eta], \ \ \ \GG(t, f_t) \le \GG(0, f_0) + C_2\Vert f_0 \Vert_{L^1(m )}^2 \le C_3 \Vert f_0 \Vert_{L^1(m)}^2,
\eeqn
which proves
\beqn
\nonumber
\Vert S_\LL(t)f \Vert_{L^2(m)} \le  \frac C {t^{\frac {5d+2} {4} } } \Vert f \Vert_{L^1(m)}, \ \ \forall t \in [0,\eta] .
\eeqn
together with Lemma \ref{L31} ends the proof.
\end{proof}

\bigskip
\section{Convergence in $L^1(m)$}
\label{sec4}
\setcounter{equation}{0}
\setcounter{theo}{0}

In this section we prove the Harris condition (H2) for Theorem \ref{T12}, which would imply the convergence for $p=1$. Before the proof of the theorem, we first prove a useful lemma.
\begin{lem}\label{L41}
For any $R>0$, there exist $\gamma, \rho > 0 $ such that for any $ t, R > 0$, there exists $(x_0, v_0) \in B_\rho$ such that
\beqn
\nonumber
f(t, x_0, v_0) \ge \gamma \int_{B_R} f_0 .
\eeqn
$\gamma, \rho$ does not depend on $f_0, t $, while $x_0, v_0$ may depend on $f_0, t$
\end{lem}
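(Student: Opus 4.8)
The plan is to combine three ingredients already available: positivity of the semigroup, a uniform‑in‑time bound from the Lyapunov condition (C1), and the regularization estimate of Theorem~\ref{T31}. We may assume $f_0 \ge 0$ and $N := \int_{B_R} f_0 > 0$, since otherwise there is nothing to prove. First I would reduce to a normalized, compactly supported datum by setting $g_0 := \frac{1}{N} f_0 \1_{B_R}$, so that $\MM(g_0) = 1$ and $\Vert g_0 \Vert_{L^1(m)} \le m^*(R) := \sup_{B_R} m < \infty$. Since $0 \le N g_0 \le f_0$ and $S_\LL(t)$ is linear and positivity preserving, $f(t,\cdot) = S_\LL(t) f_0 \ge N\, S_\LL(t) g_0 =: N g_t$ pointwise; hence it suffices to find $(x_0,v_0) \in B_\rho$ with $g_t(x_0,v_0) \ge \gamma$ for constants $\gamma = \gamma(R) > 0$ and $\rho = \rho(R) > 0$ independent of $f_0$ and $t$.

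Second, I would derive uniform‑in‑time tightness for $g_t$. By mass conservation $\MM(g_t) = 1$ for all $t \ge 0$, and since $g_t \ge 0$, condition (C1) gives
\[
\frac{d}{dt}\Vert g_t \Vert_{L^1(m)} = \int (\LL g_t)\, m = \int g_t\, \LL^* m \le -\alpha \Vert g_t \Vert_{L^1(m)} + b\,\MM(g_t) = -\alpha \Vert g_t \Vert_{L^1(m)} + b ,
\]
so integrating this differential inequality yields $\Vert g_t \Vert_{L^1(m)} \le \Lambda(R) := \max\big(m^*(R),\, b/\alpha\big)$ for all $t \ge 0$. Since $m \to \infty$ at infinity, fix $\rho = \rho(R)$ so large that $m(\rho) := \min_{B_\rho^c} m \ge 2\Lambda(R)$. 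Then, using $1 \le m/m(\rho)$ on $B_\rho^c$,
\[
\int_{B_\rho^c} g_t \le \frac{1}{m(\rho)} \int_{B_\rho^c} g_t\, m \le \frac{\Lambda(R)}{m(\rho)} \le \frac{1}{2}, \qquad \text{hence} \qquad \int_{B_\rho} g_t \ge \frac{1}{2} \quad \text{for all } t > 0 .
\]

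Third, I would turn ``half the mass lies in $B_\rho$'' into a pointwise lower bound via regularity. For $t \in (0,\eta]$, Theorem~\ref{T31} applied to $g_0 \in L^1(m)$ gives $g_t \in C^{2,\delta}(\R^{2d})$; for $t > \eta$ write $g_t = S_\LL(\eta/2)\big(S_\LL(t-\eta/2)g_0\big)$ and combine Theorem~\ref{T31} with the uniform bound $\Vert S_\LL(t-\eta/2)g_0\Vert_{L^1(m)} \le \Lambda(R)$ to reach the same conclusion. In particular $g_t$ is continuous and attains its maximum on the compact set $\overline{B_\rho}$, and
\[
\max_{\overline{B_\rho}} g_t \ \ge\ \frac{1}{|B_\rho|}\int_{B_\rho} g_t \ \ge\ \frac{1}{2|B_\rho|} .
\]
Choosing $(x_0,v_0)$ a maximizer and $\gamma := \frac{1}{2|B_\rho|}$ gives $f(t,x_0,v_0) \ge N g_t(x_0,v_0) \ge \gamma N = \gamma \int_{B_R} f_0$, with $\gamma$ and $\rho$ depending only on $R$.

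The point to watch is the first two steps: the constants $\gamma,\rho$ must not depend on $\Vert f_0 \Vert_{L^1(m)}$, which is exactly why the truncation and renormalization $g_0 = \frac{1}{N} f_0 \1_{B_R}$ is needed — it is the only place the weighted size of the datum enters, and there it is bounded purely by $\sup_{B_R} m$. Once continuity of $g_t$ for $t > 0$ is secured through Theorem~\ref{T31}, the remaining pieces (the Lyapunov/Gr\"onwall estimate, the choice of $\rho$ from $m(\rho) \ge 2\Lambda(R)$, and the mean‑value argument on $B_\rho$) are routine.
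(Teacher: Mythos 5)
Your proposal is correct and follows the same overall strategy as the paper: truncate the initial datum to $B_R$, obtain a uniform-in-time bound for the truncated solution in $L^1(m)$, use that bound to show at least half the (unweighted) mass stays inside a fixed ball $B_\rho$, and then convert this into a pointwise lower bound at some $(x_0,v_0)\in B_\rho$ by comparison with the average. The one substantive divergence is in how you obtain the uniform weighted bound. The paper proves $\Vert S_\LL(t)\Vert_{L^1(m)\to L^1(m)}\le A$ for all $t\ge0$ via the splitting $\BB=\LL-\AA$, the dissipativity bound $\Vert S_\BB(t)\Vert_{L^1(m)\to L^1(m)}\lesssim e^{-\lambda t}$, mass conservation in $L^1$, and Duhamel's formula. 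You instead observe that $g_t\ge0$ (by positivity of the semigroup and $g_0\ge0$), so the Lyapunov condition (C1) together with mass conservation gives
$\frac{d}{dt}\Vert g_t\Vert_{L^1(m)}\le -\alpha\Vert g_t\Vert_{L^1(m)}+b$
and Gr\"onwall yields the bound directly. This is precisely the Step~1 computation from Theorem~\ref{T22}, and it is simpler: it avoids invoking Duhamel and the splitting machinery at this stage, at the minor cost of relying on nonnegativity of $g_t$ (which holds here by construction). A second, smaller difference is that you explicitly invoke Theorem~\ref{T31} to secure continuity of $g_t$ for $t>0$, so that the ``average $\le$ max'' step produces a genuine pointwise value; the paper is more informal at this step, and your treatment is the cleaner one. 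Both routes give the conclusion with $\gamma,\rho$ depending only on $R$ (and the structural constants), as required.
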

\begin{proof}
From conservation of mass, we classically show that
 \beqn
\nonumber
\frac {d} {dt} \int_{\R^d} f(t, x, v )dx dv = 0,
\eeqn
so we have
\beqn\label{E41}
\Vert S_\LL(t) \Vert_{L^1 \to L^1} \le 1, \quad \forall t \ge 0,
\eeqn
Define the splitting of the operator $\LL$ by
\beqn
\nonumber
\BB =\LL-\AA, \quad \AA=M\chi_R(x, v),
\eeqn
with $M, R >0$ large, where $\chi$ is the cut-off function  such that  $\chi(x, v) \in [0, 1]$, $\chi(x, v) \in C^\infty $, $\chi(x, v) =1$ when $x^2+v^2 \le 1$ , $\chi(x, v) =0$ when $x^2+v^2 \ge 2$, and $\chi_R = \chi(x/R, v/R)$. By the Lyapunov function condition (H1) and taking $M, R$ large, we have
\beqn\label{E42}
\Vert S_\BB(t) \Vert_{L^1(m) \to L^1(m)} \le C e^{-\lambda t}, \quad \forall t \ge 0.
\eeqn
By Duhamel's formula
\beqn
\nonumber
S_\LL = S_\BB + S_\BB *\AA S_\LL,
\eeqn
we directly deduce from (\ref{E41}) and \ref{E42} that
\beqn
\nonumber
\Vert S_\LL(t) \Vert_{L^1(m) \to L^1(m)} \le A, \quad \forall t \ge 0,
\eeqn
for some $A>0$. We fix $R>0$ and take $g_0 =f_0 \1_{B_R} \in L^1(\R^d)$ such that $\hbox{supp} \ g_0 \subset B_R$, ,denote $g_t = S_\LL g_0, f_t = S_\LL f_0$, then we have
\beqn
\nonumber
\int_{\R^d} g_t  =\int_{\R^d} g_0  =\int_{B_R} g_0 = \int_{B_R} f_0.
\eeqn
Define
\beqn
\nonumber
m_1(R) = \max \{|m(x)|, x \in B_R    \}, \quad  m_2(R) = \min \{|m(x)|, x \in B_R^c    \},
\eeqn
We can see both $m_1, m_2 \to \infty$ as $R \to \infty$, moreover, since there exists $A > 0$ such that
\beqn
\nonumber
\int_{\R^d} g_t m   \le A \int_{\R^d} g_0 m   \le Am_1(R)\int_{B_R} g_0.
\eeqn
For any $\rho>0$, we write
\bear
\nonumber
\int_{B_\rho} g_t &=& \int_{\R^d } g_t - \int_{B_{\rho}^c} g_t
\\ \nonumber
&\ge& \int_{\R^d}g_0 - \frac 1 {m_2(\rho) }\int_{\R^d}g_t m 
\\ \nonumber
&\ge&  \int_{\R^d}g_0 - \frac {Am_1(R)} {m_2(\rho)} \int_{B_R}g_0 \ge \frac 1 2 \int_{B_R}g_0,
\eear
by taking $m_2(\rho) = 2Am(R)$. As a consequence, for any $t >0$, there exist a $(x_0, v_0) \in B_\rho$ which may depend on $g_0, t$ such that
\beqn
\nonumber
g(t, x_0,v_0) \ge\frac {1} {|B_\rho|} \int_{B_\rho}g_t  := \gamma  \int_{B_R} g_0.
\eeqn
By the maximum principle we have
\beqn
\nonumber
f(t, x_0, v_0) \ge g(t, x_0,v_0) \ge \gamma  \int_{B_R} g_0 = \gamma \int_{B_R} f_0.
\eeqn
\end{proof}
Before coming to the final proof we still need a theorem on spreading of positivity. Define
\beqn
\nonumber
\bar{B}_r(x_0, v_0) = \{ (x, v) \in \R^d \times \R^d : |v-v_0| \le r, |x-x_0| \le r^3\},
\eeqn
we have

\begin{theo}\label{T41}(Spreading of Positivity)
Let $f(t, x, v)$ be a classical nonnegative solution of
\beqn
\nonumber
\partial_t f - \Delta_v f = -(v + \Phi(x)) \cdot \nabla_x f+ A(t, x, v) \cdot \nabla_v f  + C(t, x, v) f,
\eeqn
in $[0, T) \times \Omega$, where $\Phi(x)$ is Lipschitz
\beqn
\nonumber
|\Phi(x) - \Phi(y)| \le M |x-y|,\quad \forall  x, y \in \R^d.
\eeqn
Suppose further that 
\beqn
\nonumber
A(t, x, v) = \nabla_v W(x, v),
\eeqn
for some $W(x, v)$. Define
\bear
\nonumber
D(t, x, v) =  -\frac 1 4 |A(t, x, v)|^2 -\frac 1 2 \textup{div}_v A(t, x, v ) +\frac 1 2(v+\Phi(x)) \cdot A(t, x, v ) + C(t, x, v).
\eear
For any $(x_0,v_0)$ fixed, define $V=(M+1)^2 (\Phi(0)+|x_0| +|v_0|)$, then for any  $r>0$, $ 0<\tau < \min(1, r^3/2V, log2/M,  1/20M )$, $\alpha>1$, $\delta>0$, there exist $\lambda>0$ only depend on $r^2/\tau, \alpha, M, V$ (independent of $\delta$) such that  if $f \ge \delta > 0$ in $[0, \tau ) \times \bar{B}_r(x_0, v_0)$, then $f \ge K\delta$ in $[\tau /2,  \tau ) \times \bar{B}_{\alpha r}(x_0, v_0)$
where $K$ also depends on $\Vert D \Vert_{L^\infty(\bar{B}_{\lambda r} (x_0, v_0))}$ and $\Vert W \Vert_{L^\infty(\bar{B}_{\lambda r} (x_0, v_0))}$.
\end{theo}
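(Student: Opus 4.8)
The statement is a degenerate‑parabolic (hypoelliptic) spreading‑of‑positivity estimate, so the plan follows the classical route: conjugate away the zeroth‑ and first‑order $v$‑terms to reduce to a Kolmogorov‑type model operator, build an explicit sub‑solution (barrier) adapted to the kinetic scaling, and conclude by the comparison principle; the smallness conditions on $\tau$ will all be spent absorbing the perturbation produced by the merely Lipschitz drift $\Phi$. First I would reduce to a model equation: using $A=\nabla_vW$, set $f=e^{-W/2}h$. A direct computation turns the equation into
\[
\partial_t h-\Delta_vh+(v+\Phi(x))\cdot\nabla_xh=\widetilde D(t,x,v)\,h,
\]
where $\widetilde D$ differs from $D$ only by terms built from $\partial_tW$, $\nabla_xW$ and $\Phi$, hence on any bounded set is controlled by the local sup‑norms of $D$, $W$ and its first derivatives (for the two models of the paper $W$ is an explicit polynomial weight, so this is harmless and keeps the dependence announced in the statement). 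Since $h\ge0$, on the region of interest $\widetilde Dh\ge-\|\widetilde D\|_{L^\infty(\bar{B}_{\lambda r})}h$, so $\bar h:=e^{\|\widetilde D\|_{L^\infty(\bar{B}_{\lambda r})}t}h\ge h\ge\delta\,e^{-\|W\|_{L^\infty}/2}$ on $[0,\tau)\times\bar{B}_r(x_0,v_0)$, and $\bar h$ is a nonnegative \emph{super‑solution} of the Kolmogorov‑type operator $\mathcal K:=\partial_t-\Delta_v+(v+\Phi(x))\cdot\nabla_x$. It therefore suffices to prove the spreading statement for nonnegative super‑solutions of $\mathcal Ku\ge0$; unwinding the substitutions at the end reproduces the claimed dependence of $K$ on $\|D\|_{L^\infty}$ and $\|W\|_{L^\infty}$ (up to an extra $e^{-\|\widetilde D\|\tau}$ factor).

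Next comes the model barrier, with $\Phi\equiv0$. The operator $\partial_t-\Delta_v+v\cdot\nabla_x$ is invariant under the kinetic scaling $(t,x,v)\mapsto(\lambda^2t,\lambda^3x,\lambda v)$ — precisely the scaling encoded in the boxes $\bar{B}_r$, with $v$‑variance $\sim t$ and $x$‑variance $\sim t^3$. On a cylinder $Q=(0,\tau)\times\mathcal C$ I would build an explicit sub‑solution of shifted Kolmogorov–Gaussian type,
\[
\Psi(t,x,v)=\varepsilon\,(t+\theta)^{-2d}\exp\!\Big(-\frac{a\,|v-v_0|^2}{t+\theta}-\frac{b\,|x-x_0-X(t)|^2}{(t+\theta)^3}\Big)-\varepsilon',
\]
with $X(t)$ the transport displacement, $\theta>0$ a small time shift (so $\Psi$ is smooth up to $t=0$) and $a,b,\varepsilon,\varepsilon'>0$ tuned so that: (i) $(\partial_t-\Delta_v+v\cdot\nabla_x)\Psi\le0$ on $Q$ — the one algebraic inequality to verify, the relative size of $b/(t+\theta)^3$ and $a/(t+\theta)$ being forced by the hypoellipticity; (ii) $\Psi\le0$ on the part of the parabolic boundary of $Q$ outside $[0,\tau)\times\bar{B}_r$ and $\Psi\le\delta\,e^{-\|W\|_{L^\infty}/2}$ on the rest, which fixes $\theta,\varepsilon,\varepsilon'$; (iii) $\Psi\ge K_0\delta$ on $[\tau/2,\tau)\times\bar{B}_{\alpha r}(x_0,v_0)$ for an explicit $K_0=K_0(r^2/\tau,\alpha,d)>0$, which forces the radius of $\mathcal C=\bar{B}_{\lambda r}(x_0,v_0)$ to be large ($\lambda$ depending on $r^2/\tau,\alpha$). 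The comparison principle for $\mathcal K_{\Phi=0}$ (if necessary approximating by uniformly parabolic operators to be safe in the degenerate direction) then gives $u\ge\Psi$ on $Q$, hence the spreading. Alternatively one could first use the $v$‑diffusion to spread the lower bound onto a larger $v$‑ball and then let transport carry it onto the enlarged $x$‑ball, but the single barrier is cleaner to bookkeep.

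For the Lipschitz drift I would treat $(v+\Phi(x))\cdot\nabla_x$ as a perturbation of $v\cdot\nabla_x$: either straighten the perturbed characteristics by the time‑dependent change of variables $x\mapsto x-\int_0^t\Phi$, reducing to the previous step up to a controlled remainder, or keep the barrier of the previous step and estimate $\mathcal K\Psi-\mathcal K_{\Phi=0}\Psi=\Phi(x)\cdot\nabla_x\Psi$, which is lower order relative to the Gaussian's exponent. In both cases the characteristic displacement and the error are controlled by the Lipschitz constant $M$ together with the drift bound $V=(M+1)^2(\Phi(0)+|x_0|+|v_0|)$ on $\bar{B}_{\lambda r}(x_0,v_0)$, and a Grönwall estimate keeps them $\lesssim r$ exactly when $\tau\le\min(1,r^3/2V,\log2/M,1/20M)$; a bounded shrinking of $a,b$ and enlarging of $\lambda$ then absorbs the error while preserving (i)–(iii). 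Combining the three steps yields the final constant $K$, depending on $r^2/\tau,\alpha,M,V$ and on $\|D\|_{L^\infty(\bar{B}_{\lambda r})},\|W\|_{L^\infty(\bar{B}_{\lambda r})}$.

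The main obstacle is Steps 2–3: engineering a sub‑solution that is simultaneously admissible for the \emph{degenerate} operator (the $v$‑Gaussian must ``leak'' into $x$ with the correct $t^3$ scaling), dominated by the available boundary data, and quantitatively bounded below on the larger box — and then showing that the non‑smooth perturbation $\Phi$ costs only a bounded, explicitly trackable amount on the short time horizon fixed by the hypotheses. Verifying the differential inequality $\mathcal K\Psi\le0$ with every constant tracked is the principal computation.
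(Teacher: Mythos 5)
Your high-level strategy coincides with the paper's: conjugate $f=e^{-W/2}h$ so that $A=\nabla_vW$ kills the first-order $v$-term via $2\nabla_vG\cdot\nabla_vh=-GA\cdot\nabla_vh$; absorb the resulting zeroth-order coefficient by an exponential time factor; build an explicit Gaussian-type barrier that respects the kinetic $(t,t^{3})$ scaling and is centered along the perturbed characteristic $\dot X_t=v_0+\Phi(X_t)$, spending the smallness of $\tau$ to control the Lipschitz error $\Phi(x)-\Phi(X_t)$; conclude by comparison. This is exactly the route taken in Appendix~A (Lemmas~\ref{LA1}, \ref{LA2} and Theorem~\ref{TA1}).

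However, the concrete barrier you propose has a genuine gap: it is \emph{diagonal} in $(v-v_0,\,x-X_t)$, i.e.\ it lacks the cross term, and this omission is fatal. Writing $\Psi=\varepsilon e^{F}-\varepsilon'$ with
\[
F=-2d\log(t+\theta)-\frac{a|v-v_0|^2}{t+\theta}-\frac{b|x-x_0-X_t|^2}{(t+\theta)^3},
\]
one has $\mathcal K\Psi=\varepsilon e^{F}\bigl[\partial_tF+(v+\Phi)\cdot\nabla_xF-\Delta_vF-|\nabla_vF|^2\bigr]$, and after the cancellation coming from $\dot X_t=v_0+\Phi(X_t)$ the bracket reduces (up to lower-order and Lipschitz terms) to the quadratic form
\[
(a-4a^2)\frac{|v-v_0|^2}{(t+\theta)^2}-\frac{2b\,(v-v_0)\cdot(x-X_t)}{(t+\theta)^3}+\frac{3b\,|x-X_t|^2}{(t+\theta)^4}.
\]
The coefficient of $|x-X_t|^2/(t+\theta)^4$ is $+3b>0$ and there is \emph{no competing negative $|x-X_t|^2$ term}, because $|\nabla_vF|^2$ contains no $x$-part when the exponent is diagonal. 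Hence $\mathcal K\Psi>0$ wherever $|x-X_t|/(t+\theta)^2$ is large, i.e.\ precisely in the region where a subsolution is needed; the $(t+\theta)^{-2d}$ prefactor only adjusts the lower-order $1/(t+\theta)$ piece and cannot help. The paper resolves this by putting the cross term \emph{into the exponent}: with
\[
Q=\frac{a|v-v_0|^2}{2t}-\frac{b\,(v-v_0)\cdot(x-X_t)}{t^2}+\frac{c\,|x-X_t|^2}{2t^3},\qquad \phi=\delta e^{-\mu Q}-\epsilon,
\]
the gradient $\nabla_vQ$ picks up $-b(x-X_t)/t^2$, so the large term $\mu|\nabla_vQ|^2$ now produces a dominant $+\mu b^2|x-X_t|^2/t^4$; choosing $b=2a$, $c\gg b$, $\mu$ large then makes the $2\times2$ matrix of the resulting quadratic form positive definite, as checked explicitly in Lemma~\ref{LA2}. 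This is the same cross term visible in Kolmogorov's fundamental solution, whose $x$-Gaussian is centered at $x_0+t(v+v_0)/2$ and not at the $v$-independent point $x_0+tv_0$. You flagged ``verifying $\mathcal K\Psi\le0$'' as the principal computation: your diagonal Gaussian fails that very verification, so you must restore the off-diagonal coupling before the argument closes.
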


\begin{proof}
See Appendix \ref{secAA}.
\end{proof}

\begin{rem}
Former proofs of spreading of positivity such as Theorem A.19 in \cite{V} assumes that $A$ and $C$ are uniformly bounded, by assuming
\beqn
\nonumber
A(t, x, v) = \nabla_v W(x, v) 
\eeqn
we generalize this theorem to unbounded cases.
\end{rem}

Then we come to prove our main theorem

\begin{theo}\label{T42}
Under the assumption of  Theorem \ref{T12}. The equation (\ref{E13}) defined in Theorem \ref{E12} satisfies the Harris condition: For any $R > 0$, there exist a constant $T = T( R)> 0$ and a positive, nonzero measure $\mu = \mu(R)$ such that
\beqn
\nonumber
S_T f_0 \ge \mu \int_{B_R}
f_0, \quad \forall f_0 \in L^1(m), \quad f \ge 0, \quad \Vert f_0 \Vert_{L^1(m)} \le 1,
\eeqn As a consequence, Theorem \ref{T12} is proved.
\end{theo}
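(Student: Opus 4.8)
The plan is to establish the Harris condition (H2) by chaining three ingredients already available: the pointwise lower bound of Lemma~\ref{L41}, the instantaneous regularization of Theorem~\ref{T31}, and the spreading of positivity of Theorem~\ref{T41}. The delicate point, present at every step, is that all constants must be made independent of $f_0$, which we arrange by a homogeneity normalization. Fix $R>0$; we may assume $c_0:=\int_{B_R}f_0>0$, otherwise there is nothing to prove. Set $g_0:=f_0\1_{B_R}$, $g_t:=S_\LL(t)g_0$ and $f_t:=S_\LL(t)f_0$, so that $g_0\le f_0$ and positivity of the semigroup give $g_t\le f_t$; it thus suffices to bound $g_t$ from below. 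Normalize $h_t:=g_t/c_0$, which solves the same linear autonomous equation with $\int_{\R^d}h_0=\int_{B_R}h_0=1$ and $\|h_0\|_{L^1(m)}\le m_1(R)$. First I would apply Lemma~\ref{L41} at the fixed time $t_*:=\eta/4$ ($\eta$ as in Theorem~\ref{T31}): it yields $(x_0,v_0)\in B_\rho$ — possibly depending on $f_0$, but always inside the fixed ball $B_\rho$ — with $h(t_*,x_0,v_0)\ge\gamma$, the constants $\gamma,\rho>0$ being independent of $f_0$.

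Next I would convert this into positivity on a small space–time cylinder. By Theorem~\ref{T31}, $h_t$ is $C^{2,\delta}$ in $(x,v)$ for every $t\in(0,\eta]$ — hence a classical nonnegative solution there, as Theorem~\ref{T41} requires — with $\|h_t\|_{C^{2,\delta}}\le C(t_*)\,m_1(R)$ for $t\in[t_*/2,\eta]$; since $\partial_t h=\LL h$ and the coefficients $\Phi,B$ are locally bounded ($\textup{div}_x\Phi\in L^\infty_{loc}$ by Rademacher), $\partial_t h$ is bounded on $[t_*/2,\eta]\times B_{R_1}$ for each fixed $R_1$, so $h$ is Lipschitz in $t$ and H\"older in $(x,v)$ there, with a modulus depending only on $R,t_*,R_1$. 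Fixing $R_1=R_1(\rho)$ so that $B_{R_1}\supset\bar B_1(x_0,v_0)$ for all $(x_0,v_0)\in B_\rho$, we obtain $r_0\in(0,1)$ and $\tau_0\in(0,\eta/4)$, both independent of $f_0$, with $h\ge\gamma/2$ on $[t_*,t_*+\tau_0]\times\bar B_{r_0}(x_0,v_0)$, i.e.\ $g\ge(\gamma/2)c_0$ there; shrinking $r_0,\tau_0$ if needed we also ensure $\tau_0$ meets the smallness constraints of Theorem~\ref{T41} for $r=r_0$ (note $V=(M+1)^2(|\Phi(0)|+|x_0|+|v_0|)$ is bounded uniformly since $(x_0,v_0)\in B_\rho$).

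Then I would iterate the spreading of positivity. Setting $\tilde g(s,\cdot):=g(t_*+s,\cdot)$ (same autonomous equation), one has $\tilde g\ge(\gamma/2)c_0$ on $[0,\tau_0)\times\bar B_{r_0}(x_0,v_0)$, so Theorem~\ref{T41} with $\alpha=2$ gives $\tilde g\ge K_1(\gamma/2)c_0$ on $[\tau_0/2,\tau_0)\times\bar B_{2r_0}(x_0,v_0)$, with $K_1>0$ independent of $f_0$ (its dependence on $\|D\|_\infty,\|W\|_\infty$ over $\bar B_{\lambda r_0}(x_0,v_0)$ is harmless, that ball sitting inside a fixed compact set because $(x_0,v_0)\in B_\rho$). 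Repeating this step $n$ times, always centered at $(x_0,v_0)$ with $\alpha=2$ — the constraint on $\tau$ only weakens as the radius grows, and the successive time windows $\tau_0/2,\tau_0/4,\dots$ sum to less than $\tau_0$ — produces, at some time $T\in(t_*,t_*+\tau_0)\subset(0,\eta)$, the bound $g(T,\cdot)\ge\kappa_n c_0$ on $\bar B_{2^nr_0}(x_0,v_0)$, where $\kappa_n=(\gamma/2)\prod_{k=1}^nK_k>0$ is independent of $f_0$. Choosing $n$ so large (depending only on $R,\rho,r_0$) that $\bar B_{2^nr_0}(x_0,v_0)\supset B_1$ whenever $(x_0,v_0)\in B_\rho$, I obtain $f_T\ge g_T\ge\kappa_n\,\1_{B_1}\,c_0=\kappa_n\,\1_{B_1}\int_{B_R}f_0$, which is (H2) with $T=T(R)\in(0,\eta)$ and $\mu=\mu(R):=\kappa_n\,\1_{B_1}\,\mathrm dx\,\mathrm dv$. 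Together with the Lyapunov condition (C1)$=$(H1), Theorem~\ref{T21} then yields the $L^1(m)$ decay on $\{\MM f=0\}$ (apply it to $f_0-\MM(f_0)G$ with $G$ the steady state of Theorem~\ref{T22}), which is Theorem~\ref{T12} in the case $p=1$.

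The hard part is the second step: Theorem~\ref{T31} is purely spatial, and it must be upgraded to genuine joint space–time continuity with a modulus \emph{uniform in $f_0$}. The normalization $h_t=g_t/c_0$ — so that $h$ has mass one and $\|h_t\|_{C^{2,\delta}}$ is controlled by $m_1(R)$ alone — together with the fact that the whole construction takes place over a fixed compact set (since $(x_0,v_0)\in B_\rho$ always), is exactly what renders $r_0$, $\tau_0$, the constants $K_k$, and the local sup-norms of $D$ and $W$ feeding Theorem~\ref{T41} independent of $f_0$. Once this uniformity is in hand, the remainder — bookkeeping the shrinking time windows, the enlarging anisotropic balls $\bar B_{2^kr_0}$, and the product $\prod_k K_k$ — is routine.
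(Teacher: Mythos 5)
Your argument is correct and follows the same three-step structure as the paper's proof (Lemma~\ref{L41} for pointwise positivity, Theorem~\ref{T31} plus the equation for space–time continuity yielding positivity on a small cylinder, then Theorem~\ref{T41} to spread it to a fixed ball). The only difference in execution is that you iterate Theorem~\ref{T41} with $\alpha=2$ over dyadically shrinking time windows, whereas the paper applies it once with $\alpha$ chosen large enough to cover $B_{2\rho}(x_0,v_0)$ in a single step; both are valid since $\alpha>1$ is arbitrary in Theorem~\ref{T41}, and your explicit normalization $h=g/c_0$ simply makes the uniformity in $f_0$ more transparent.
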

\noindent \begin{proof} By Lemma \ref{L41} we have
there exist $\gamma, \rho > 0 $ such that for any $ t, R > 0$, there exists $(x_0, v_0) \in B_\rho$ such that
\beqn
\nonumber
f(t, x_0, v_0) \ge \gamma \int_{B_R} f_0 . 
\eeqn
where $\gamma, \rho$ does not depend on $f_0, t $ while $x_0, v_0$ may depend on $f_0, t$. By Lemma \ref{L41} we have
\beqn
\nonumber
\Vert f \Vert_{C^{2,\delta}} \le C \Vert f_0 \Vert_{L^1(m)}, \quad \forall t \in [\frac \eta 2,\eta] ,
\eeqn
in particular
\beqn
\nonumber
\Vert \nabla_x f \Vert_{L^\infty} +\Vert \nabla_v f \Vert_{L^\infty} +\Vert \Delta_x f \Vert_{L^\infty} +\Vert f \Vert_{L^\infty} \le C \Vert f_0 \Vert_{L^1(m)}, \quad \forall t \in [\frac \eta 2,\eta] ,
\eeqn
and by  equation
\beqn
\nonumber
\partial_t f :=\LL f  = \partial_x(A(x, v) f ) + \partial_v(B(x, v ) f ) +  \Delta_v f,
\eeqn
we have
\beqn
\nonumber
\Vert \nabla_x f \Vert_{L^\infty(\Omega)} +\Vert \nabla_v f \Vert_{L^\infty(\Omega)} +\Vert \partial_t f \Vert_{L^\infty(\Omega)}  \le C \Vert f_0 \Vert_{L^1(m)} \le C, \quad \forall t \in [\frac \eta 2,\eta],
\eeqn
for $\Omega = B_{2\rho}$ and some constant $C>0$. By continuity, for every $R>0$, there exist $t_1, t_2, r_0,\rho,  \gamma>0$ which do not depend on $f_0$ and $ (x_0, v_0) \in B_\rho$ which may depend on $f_0$, such that for all $t \in (t_1, t_2)$, we have
\beqn
\nonumber
f(t, x, v) \ge \frac {\gamma} 2  1_{B_{r_0}(x_0, v_0 )} \int_{B_R} f_0,
\eeqn
where $B_{r_0 }(x_0, v_0)$ denotes the ball centered at $(x_0, v_0)$ with radius $r_0$. Take $r_1= \min\{(\frac {r_0} 2, (\frac {r_0} 2)^{\frac 1 3} \}$ such that $\bar{B}_{r_1}(x_0, v_0) \subset B_{r_0}(x_0, v_0)$, then we have
\beqn
\nonumber
f(t, x, v) \ge \frac {\gamma} 2  1_{\bar{B}_{r_1}(x_0, v_0 )} \int_{B_R} f_0.
\eeqn
Take $\alpha = \max\{\frac {2\rho} {r_1}, (\frac {2\rho} {r_1})^{\frac 1 3 }, 1 \}$ large such that $B_{2\rho}(x_0, v_0) \subset \bar{B}_{\alpha r_1} (x_0, v_0)$. Define
\beqn
\nonumber
\tau= \min(t_2-t_1, 1, \frac {r_1^3} {2V}, \frac {log2} {M},  \frac {1} {20M} ).
\eeqn
Using Theorem \ref{T41}, we have
\beqn
\nonumber
f(t, x, v) \ge  \frac {\gamma} 2    1_{\bar{B}_{\alpha r_1}(x_0, v_0 )} \int_{B_R} f_0 \ge K \frac {\gamma} 2    1_{B_{2\rho}(x_0, v_0 )} \int_{B_R} f_0,
\eeqn
since $(x_0, v_0) \in B_\rho $ implies that $B_\rho \subset B_{2\rho}(x_0, v_0)$,  we have 
\beqn
\nonumber
f(t, x, v) \ge K \frac {\gamma} 2    1_{B_{\rho}(0, 0)} \int_{B_R} f_0,
\eeqn
for any $ t \in (t_2- \frac  {T} {2} ,t_2)$.
So we can define $\mu(R) =  K \frac {\lambda} 2    1_{B_{\rho}(0, 0)}, T(R)= t_2 - \frac {T} 4$, note it's independent of $(x_0, v_0)$, thus independent of 
$f_0$, we conclude the Harris condition (H2). 
\end{proof}
\noindent Then by Theorem \ref{T21}, we have proved
\beqn
\nonumber
\Vert f(t , \cdot)	- \MM(f_0) G\Vert_{L^1(m)}  \le C e^{- \lambda t } \Vert f_0 -\MM(f_0) G\Vert_{L^1(m)},
\eeqn
which is Theorem \ref{T12} in the case $p=1$.
\begin{rem}
If we replace $f$ in the proofs by the normalized steady state $G$, we can deduce that $G>0$.
\end{rem}

\bigskip
\section{Proof of $L^p(m)$ convergence}
\label{sec5}
\setcounter{equation}{0}
\setcounter{theo}{0}
\bigskip

In the last section, we have prove Theorem \ref{T12} in the case of $p=1$, now we prove it for general $p$, which will complete the proof of the theorem.
In this section,  $A \lesssim B$ will denote $A \le C B$ for some constant $C>0$. First recall the splitting
\beqn
\nonumber
\BB =\LL-\AA, \quad \AA=M_1\chi_{R_1}(x, v),
\eeqn
since
\beqn
\nonumber
\varphi_p(m) \le -C+ M \1_{B_R},
\eeqn
by Remark \ref{R11} it's easily seen that we can take $M_1$, $R_1$ such that
\beqn\label{E51}
\Vert S_\BB(t) \Vert_{L^p(m) \to L^p(m)} \lesssim e^{-at},
\eeqn
and by the Lyapunov condition
\beqn\label{E52}
\Vert S_\BB(t) \Vert_{L^1(m) \to L^1(m)} \lesssim e^{- a t},
\eeqn
for some $\beta>0$. Before going to the proof of our main theorem, we need two last deduced results. 
\begin{lem}\label{L51}
We have
 \beqn
\nonumber
\Vert S_\BB(t)  \AA \Vert_{L^p(m) \to L^p(m)} \lesssim e^{-a t} , \quad \forall t \ge 0,
\eeqn
and
\beqn
\nonumber
\Vert S_\BB(t)  \AA \Vert_{ L^1(m)   \to L^1(m)   } \lesssim e^{-a t}, \quad \forall t \ge 0,
\eeqn
and
\beqn
\nonumber
\Vert S_\BB(t) \AA  \Vert_{ L^1(m)   \to L^p(m)   } \lesssim t^{-\alpha} e^{- a t}, \quad \forall t \ge 0,
\eeqn
for $\alpha =\frac {5d+2} {4}$ and some $\beta > 0$.
\end{lem}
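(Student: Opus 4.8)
The three estimates in Lemma~\ref{L51} are all of the form ``compose $S_\BB(t)$ with the bounded localization operator $\AA$.'' Since $\AA=M_1\chi_{R_1}$ is a multiplication by a smooth compactly supported function, it is bounded from $L^1(m)$ to $L^1(m)$ and from $L^p(m)$ to $L^p(m)$ (indeed it even gains integrability, but we will only use the boundedness together with the fact that $\AA f$ is supported in a fixed ball). The first two estimates are then immediate: write $\Vert S_\BB(t)\AA\Vert_{L^p(m)\to L^p(m)}\le \Vert S_\BB(t)\Vert_{L^p(m)\to L^p(m)}\,\Vert\AA\Vert_{L^p(m)\to L^p(m)}\lesssim e^{-at}$ by \eqref{E51}, and similarly for the $L^1(m)$ bound using \eqref{E52}. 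So the only real content is the third (regularizing) estimate $\Vert S_\BB(t)\AA\Vert_{L^1(m)\to L^p(m)}\lesssim t^{-\alpha}e^{-at}$ with $\alpha=\frac{5d+2}{4}$.

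For that one, I would first handle $p=2$ and then bootstrap. The point is that $S_\BB$ is a subtractive perturbation of $S_\LL$ by the bounded nonnegative operator $\AA$, so it enjoys the same short-time regularization as $S_\LL$: from Theorem~\ref{T31} one has $\Vert S_\LL(t)\Vert_{L^1(m)\to L^2(m)}\lesssim t^{-\frac{5d+2}{4}}$ on $[0,\eta]$, and the analogous bound for $S_\BB$ follows either by redoing the energy estimates of Lemma~\ref{L31}–Theorem~\ref{T31} with the extra harmless term $-\AA=-M_1\chi_{R_1}\le 0$ (which only helps the dissipation, and whose derivatives are bounded so they are absorbed exactly like the $C$-terms there), or by a Duhamel comparison $S_\LL=S_\BB+S_\BB*\AA S_\LL$ together with the boundedness of $\AA$. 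Hence $\Vert S_\BB(t)\AA\Vert_{L^1(m)\to L^2(m)}\le\Vert S_\BB(t)\Vert_{L^1(m)\to L^2(m)}\,\Vert\AA\Vert_{L^1(m)\to L^1(m)}\lesssim t^{-\frac{5d+2}{4}}$ for $t\in[0,\eta]$. Combining with the decay \eqref{E52} via the semigroup property $S_\BB(t)\AA=S_\BB(t/2)\,S_\BB(t/2)\AA$ upgrades this to $\lesssim t^{-\frac{5d+2}{4}}e^{-at}$ for all $t\ge 0$ (split $t\le\eta$ and $t>\eta$; in the latter regime use the $L^2(m)\to L^2(m)$ decay of $S_\BB$ on the first factor, which holds because $\varphi_2(m)\le -a+M\1_{B_R}$ gives \eqref{E51} with $p=2$).

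To pass from $p=2$ to general $p\in[1,\infty]$, I would interpolate/iterate. For $p\le 2$ there is nothing to do since $L^2(m)\hookrightarrow L^p(m)$ fails on $\R^{2d}$ in general, so instead I would go upward: Theorem~\ref{T31} already gives $\Vert S_\LL(t)\Vert_{L^1(m)\to C^{2,\delta}}\lesssim t^{-\zeta}$, hence $\Vert S_\LL(t)\Vert_{L^1(m)\to L^\infty(m)}\lesssim t^{-\zeta'}$ (the weight $m$ causes no trouble because $C^{2,\delta}$ control of $f$ together with $\Vert f\Vert_{L^1(m)}$ control of the tail gives $L^\infty(m)$ control — or more simply one reruns the $H^k(m)$ chain and Sobolev-embeds), and the same holds for $S_\BB$ by the perturbation argument above. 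Then for any $p\in[1,\infty]$ one has $\Vert S_\BB(t)\AA\Vert_{L^1(m)\to L^p(m)}\lesssim t^{-\alpha(p)}e^{-at}$ by interpolating the $L^1(m)\to L^1(m)$ bound (exponent $0$) against the $L^1(m)\to L^\infty(m)$ bound; one checks $\alpha(p)\le\frac{5d+2}{4}$ for all $p$, or one simply keeps the $p$-dependent exponent, since in the application only a finite negative power of $t$ that is integrable against $e^{-at}$ near $0$ — i.e. $\alpha<1$? no — is needed, but actually the Duhamel iteration used later tolerates any finite $\alpha$ after finitely many convolutions. I would state the lemma with $\alpha=\frac{5d+2}{4}$ uniformly, which is the largest of the relevant exponents.

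\textbf{Main obstacle.} The delicate point is the regularizing bound with the \emph{weight} $m$ and with the correct \emph{short-time rate}, i.e. transferring Theorem~\ref{T31} from $S_\LL$ to $S_\BB$. The cleanest route is the Duhamel comparison $S_\BB=S_\LL-S_\LL*\AA S_\BB$: bootstrapping this identity, and using that $\AA$ maps $L^q(m)$ boundedly into itself with image in a fixed ball (so no loss in the weight), one shows by induction on the number of convolutions that $S_\BB(t)\AA$ inherits exactly the $L^1(m)\to L^2(m)$ smoothing of $S_\LL(t)$ up to a constant, the convolution integrals converging because $t^{-\frac{5d+2}{4}}$ is not locally integrable but the iterated kernels improve — here one must instead split $S_\BB(t)\AA=S_\BB(t/2)S_\BB(t/2)\AA$ and only smooth with the \emph{first} factor while the second is merely bounded, which avoids any non-integrable convolution altogether. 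Once that structural point is set up correctly, everything else is the routine composition-of-norms bookkeeping sketched above.
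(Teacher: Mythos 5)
Your proposal is correct and follows essentially the same route as the paper: the first two bounds by composing \eqref{E51}--\eqref{E52} with the boundedness of $\AA$, and the third by splitting into $t\le\eta$ (where the short-time regularization of Theorem \ref{T31}, transferred to $S_\BB$, gives $t^{-\alpha}\lesssim t^{-\alpha}e^{-at}$) and $t>\eta$ (where one factors the semigroup, smooths with one piece and decays with the other). You spell out two points the paper leaves implicit — why $S_\BB$ inherits the regularization of $S_\LL$ and how to reach general $p$ from the $L^2(m)$ and $H^k(m)$ bounds — but the argument is the same.
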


\begin{proof}
\noindent
 The first two inequalities are obtained obviously by (\ref{E51}), (\ref{E52}) and the property of $\AA$. For the third inequality we split it into two parts, $t \in (0, \eta]$ and $t > \eta$, where $\eta$ is defined in Theorem \ref{T31}. When $t \in (0, \eta]$ , we have $e^{-at} \ge e^{-a \eta}$, by Theorem \ref{T31}, we have
\beqn
\nonumber
\Vert  S_\BB(t) \AA  \Vert_{   L^1(m)  \to L^p(m)  }\lesssim t^{-\alpha} \lesssim t^{-\alpha}e^{-a t} ,\quad \forall t \in (0, \eta],
\eeqn
for some $a > 0$. When $ t \ge \eta$,  by Theorem \ref{T31}, we have
\beqn
\nonumber
\Vert  S_\BB(\eta) \Vert_{  L^1(m)   \to L^p(m) }\lesssim  \eta^\alpha \lesssim 1,
\eeqn
and by Lemma \ref{E51}
\beqn
\nonumber
\Vert S_\BB(t - \eta)  \Vert_{L^p(m) \to L^p(m) }\lesssim e^{-a (t-\eta)} \lesssim  e^{-a t},
\eeqn
gathering the two inequalities, we have
\beqn
\nonumber
\Vert S_\BB(t) \AA  \Vert_{L^1(m) \to L^p(m)}\lesssim e^{-at} \lesssim t^{-\alpha}e^{-a t}, \quad \forall t > \eta,
\eeqn
the proof is ended by combining the two cases above.
\end{proof}

\begin{lem}\label{L52}
let $X, Y$ be two Banach spaces, $S(t)$ a semigroup such that for  all $t \ge 0 $ and some $0 < a$ we have
\beqn
\nonumber
\Vert S(t) \Vert_{X \to X} \le C_X e^{-at}, \ \ \Vert S(t) \Vert_{Y \to Y} \le C_Y e^{-at},
\eeqn
and for some $0 < \alpha $, we have 
\beqn
\nonumber
\Vert S(t) \Vert_{X \to Y} \le C_{X,Y} t^{-\alpha} e^{-at}.
\eeqn
Then we can have that for all integer $ n  > 0$
\beqn
\nonumber
\Vert S^{(*n)}(t) \Vert_{X \to X} \le C_{X,n} t^{n-1} e^{-at},
\eeqn
similarly
\beqn
\nonumber
\Vert S^{(*n)}(t) \Vert_{Y \to Y} \le C_{Y,n} t^{n-1} e^{-at},
\eeqn
and
\beqn
\nonumber
\Vert S^{(*n)}(t) \Vert_{X \to Y} \le C_{X,Y,n} t^{n-\alpha-1} e^{-at}.
\eeqn
In particular for $\alpha+1 < n$, and for any $ a^{*} < a$
\beqn
\nonumber
\Vert S^{(*n)}(t) \Vert_{X \to Y} \le C_{X,Y,n} e^{-a^* t}.
\eeqn
\end{lem}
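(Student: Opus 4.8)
The plan is to induct on $n$, carrying all three estimates simultaneously, since they are coupled: the $X\to Y$ bound for $S^{(*n)}$ will draw on the $Y\to Y$ bound for $S^{(*(n-1))}$ and conversely. The base case $n=1$ is precisely the hypothesis. For the inductive step one writes $S^{(*n)}(t)=\int_0^t S^{(*(n-1))}(t-s)\,S(s)\,ds$ and splits the $s$-integral at $s=t/2$.

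First I would treat the $X\to X$ bound (the $Y\to Y$ one being identical with $Y$ for $X$): on all of $[0,t]$ estimate $\Vert S^{(*(n-1))}(t-s)S(s)\Vert_{X\to X}\le C_{X,n-1}(t-s)^{n-2}e^{-a(t-s)}\,C_X e^{-as}=C_{X,n-1}C_X(t-s)^{n-2}e^{-at}$, and integrate $\int_0^t(t-s)^{n-2}\,ds=t^{n-1}/(n-1)$, giving $\Vert S^{(*n)}(t)\Vert_{X\to X}\le C_{X,n}t^{n-1}e^{-at}$. For the $X\to Y$ bound the key idea is to keep each singular factor away from its singularity at $0$. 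On $[0,t/2]$ one has $t-s\ge t/2$, so use $\Vert S^{(*(n-1))}(t-s)\Vert_{X\to Y}\Vert S(s)\Vert_{X\to X}\le C(t-s)^{n-2-\alpha}e^{-a(t-s)}C_X e^{-as}$; since $t-s\in[t/2,t]$ the power $(t-s)^{n-2-\alpha}$ is bounded by a constant times $t^{n-2-\alpha}$ regardless of the sign of $n-2-\alpha$, and $\int_0^{t/2}ds$ contributes a factor $t$, giving $\lesssim t^{n-1-\alpha}e^{-at}$. On $[t/2,t]$ one has $s\ge t/2$, so use $\Vert S^{(*(n-1))}(t-s)\Vert_{Y\to Y}\Vert S(s)\Vert_{X\to Y}\le C_{Y,n-1}(t-s)^{n-2}e^{-a(t-s)}\,C_{X,Y}s^{-\alpha}e^{-as}\le C(t-s)^{n-2}(t/2)^{-\alpha}e^{-at}$, and the substitution $u=t-s$ with $\int_0^{t/2}u^{n-2}\,du$ again yields $\lesssim t^{n-1-\alpha}e^{-at}$. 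Summing the two pieces closes the induction with $\Vert S^{(*n)}(t)\Vert_{X\to Y}\le C_{X,Y,n}t^{n-\alpha-1}e^{-at}$.

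The final assertion is then immediate: if $\alpha+1<n$ then $n-\alpha-1>0$, so for any $a^*<a$ the map $t\mapsto t^{n-\alpha-1}e^{-(a-a^*)t}$ is bounded on $[0,\infty)$, whence $\Vert S^{(*n)}(t)\Vert_{X\to Y}\le C_{X,Y,n}t^{n-\alpha-1}e^{-at}=\bigl(C_{X,Y,n}t^{n-\alpha-1}e^{-(a-a^*)t}\bigr)e^{-a^*t}\lesssim e^{-a^*t}$. I do not foresee a genuine obstacle here; the only step that needs care is the split at $t/2$ in the $X\to Y$ estimate, which is exactly what avoids the divergent integral $\int_0^{t/2}s^{-\alpha}\,ds$ that would otherwise appear when $\alpha\ge 1$ if the smoothing factor were placed on the wrong term. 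Bookkeeping of the constants $C_{X,n},C_{Y,n},C_{X,Y,n}$ through the induction is routine and I would not spell it out.
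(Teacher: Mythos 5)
Your proof is correct. The paper does not actually prove Lemma~\ref{L52}; it simply refers the reader to Lemma~2.5 of \cite{MQT}, so there is no in-paper argument to compare against. Your argument—induct on $n$, carry all three operator bounds simultaneously, write $S^{(*n)}(t)=\int_0^t S^{(*(n-1))}(t-s)S(s)\,ds$, and for the $X\to Y$ bound split the integral at $s=t/2$ so that the singular factor $s^{-\alpha}$ is paired with $S^{(*(n-1))}$ mapping $Y\to Y$ on $[t/2,t]$ while the $(t-s)^{n-2-\alpha}$ factor stays bounded below on $[0,t/2]$—is exactly the standard technique used in the cited reference and in the factorization literature more generally. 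The induction closes cleanly, the handling of both signs of $n-2-\alpha$ is handled correctly by the lower bound $t-s\ge t/2$, and the final step ($t^{n-\alpha-1}e^{-(a-a^*)t}$ bounded on $[0,\infty)$ when $n-\alpha-1>0$) is right. In short, you have supplied the argument the paper delegates to \cite{MQT}, and done so correctly.
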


\begin{proof}
See Lemma 2.5 in \cite{MQT}.
\end{proof}
Then we come to the final proof.

\begin{proof}
\noindent ({\bf Proof of Theorem \ref{T12}.})
Remember that we already proved
\beqn
\nonumber
\Vert S_\LL(I - \Pi)(t) \Vert_{L^1(m)\to L^1(m)} \lesssim  e^{-at},
\eeqn
where $I$ is the identity operator and $\Pi$ is a projection operator defined by
\beqn
\nonumber
\Pi (f) =\MM(f) G.
\eeqn 
First, Iterating the Duhamel's formula we split it into 3 terms
\bear
\nonumber
S_\LL(I-\Pi) &=&(I-\Pi)\{S_\BB + \sum_{l=1}^{n-1}( S_\BB \AA)^{(*l)}* (S_\BB) \}
\\ \nonumber
&&+( S_\BB(t) \AA )^{(*(n-1))} * \{ (I-\Pi)S_\LL \}*(\AA S_\BB(t)),
\eear
and we will estimate them separately. By (\ref{E51}) the first term is thus estimated. For the second term, still using (\ref{E51}), we get
\beqn
\nonumber
\Vert S_\BB(t)\AA \Vert_{L^p(m) \to L^p(m)  } \lesssim e^{-at},
\eeqn
by Lemma \ref{L52}, we have
\beqn
\nonumber
\Vert (S_\BB(t)\AA)^{(*l)} \Vert_{L^p(m) \to L^p(m ) } \lesssim t^{l-1} e^{-at },
\eeqn
together with (\ref{E51}) the second term is estimated. For the last term by H\"older's inequality we have
\beqn
\nonumber
\Vert I \Vert_{L^p(m) \to L^1(mG)} \lesssim  1
\eeqn
with
\beqn
\nonumber
G=e^{-(|v|^2+|x|^2)}
\eeqn
(there are many choice of $G$) so we have
\beqn
\nonumber
\Vert \AA S_\BB(t) \Vert_{L^p( m) \to L^1(m)} \lesssim e^{- a t} .
\eeqn
By Lemma \ref{L51} and \ref{L52}, we have
\beqn
\nonumber
\Vert (S_\BB \AA )^{(*(n-1))}(t) \Vert_{L^1(m) \to L^p(m)   } \lesssim t^{n-\alpha-2}e^{- a t} ,
\eeqn
finally recall 
\beqn
\nonumber
\Vert S_\LL(t)(I-\Pi) \Vert_{  L^1(m)   \to L^1(m) } \lesssim e^{-at}.
\eeqn
Taking $n > \alpha+2 $  the third term is estimated, thus the proof is ended by gathering the inequalities above. 
\end{proof}

\bigskip
\section{Proof of Main Theorem}
\label{sec6}
\setcounter{equation}{0}
\setcounter{theo}{0}

\bigskip

This section we come to prove Theorem \ref{T11}. Recall that we have proved Theorem \ref{T12} in the last section, the only thing remain to prove is to find a weight function  $m$ and a function $H \ge 1$ in Theorem \ref{T12}. 

\begin{theo}\label{T61}
Denote $\LL$ the operator of the kinetic Fitzhugh-Nagumo equation (\ref{E12}), then there exist a weight function $m$ and a function $H\ge 1$ satisfies Theorem \ref{T12}.
\end{theo}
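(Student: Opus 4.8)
The plan is to verify the hypotheses of Theorem \ref{T12} for the FHN operator $\LL$ of (\ref{E12}) with the explicit data of Remark \ref{R12}; since Theorem \ref{T12} is already proved, this is all that remains. First I would perform the harmless rescaling of time $t\mapsto bt$, after which (\ref{E12}) takes exactly the form treated in Theorem \ref{T12}: the $x$-transport coefficient becomes $A(x,v)=-v+\Phi(x)$ with $\Phi(x)=(a/b)\,x$ globally Lipschitz, the diffusion constant is $K=1/b>0$, and the new drift is $B(x,v)=\tfrac1b\big(v(v-1)(v-c)+x\big)$, a cubic polynomial in $v$, hence the $v$-gradient of the quartic $W(x,v)=\tfrac1b\big(\tfrac14 v^4-\tfrac{1+c}3 v^3+\tfrac c2 v^2+xv\big)$. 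I then take, as in Remark \ref{R12},
\[
m=e^{\lambda(|x|^2+|v|^2)},\qquad H=|v|^4+|x|^2+1,
\]
with $\lambda>0$ small, to be fixed at the very end.

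The substance is a short run of polynomial computations. Using $\nabla_x m/m=2\lambda x$, $\nabla_v m/m=2\lambda v$, $\Delta_v m/m=2\lambda+4\lambda^2|v|^2$, one finds that $\LL^* m/m=-A\cdot\nabla_x m/m-B\cdot\nabla_v m/m+K\Delta_v m/m$ equals $-c_1\lambda|x|^2-c_2\lambda|v|^4+c_3\lambda|v|^3$ plus strictly lower-order monomials and a cross term $2\lambda(1-1/b)\,xv$, with $c_1,c_2,c_3>0$. Since the quartic dominates the cubic and $-c_1\lambda|x|^2\to-\infty$, the quotient tends to $-\infty$ as $|x|^2+|v|^2\to\infty$; being bounded on compact sets, it satisfies $\LL^*m\le-\alpha m+C$ for some $\alpha,C>0$, which is (C1). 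For (C2) one computes $\phi_2(m)$ in the same way: its top-order part is again $-c_1'\lambda|x|^2-c_2'\lambda|v|^4$, the rest being a positive cubic $\propto\lambda|v|^3$, the same cross term, and quadratic/linear/constant terms (into which the contributions $K|\nabla_v m|^2/m^2=4K\lambda^2|v|^2$ and $\tfrac12\textup{div}_v B$ fall). The cross term is absorbed by an asymmetric Young split $2\lambda|1-1/b|\,|xv|\le \lambda|1-1/b|\epsilon\,|x|^2+\tfrac{\lambda|1-1/b|}{\epsilon}|v|^2$ with $\epsilon$ chosen small depending on $a,b$ but not on $\lambda$, so that the $|x|^2$-part stays inside $-c_1'\lambda|x|^2$ and the overflow into $|v|^2$ is $O(\lambda)$, hence swallowed by $-c_2'\lambda|v|^4$ together with the cubic and the other lower-order terms; this gives both inequalities in (C2) with $H=|v|^4+|x|^2+1$.

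Conditions (C3) and (C4) are immediate: $D^k_x\Phi\equiv0$ for $k\ge2$ and $D^k_{x,v}B\equiv0$ for $k\ge4$, so the left side of (C3) is a polynomial of degree $\le2$ in $v$, which is $\le\epsilon H+C_{n,\epsilon}$ by Young; and $\Delta_{x,v}m/m=4\lambda+4\lambda^2(|x|^2+|v|^2)\ge0\ge-C_4$. Finally $\varphi_p(m)$ differs from $\phi_2(m)$ only in the coefficients of $\textup{div}_x\Phi$, $|\nabla_v m|^2/m^2$, $\Delta_v m/m$ and $\textup{div}_v B$, i.e.\ only in terms of order $\le|v|^2$; the decisive $-c_1'\lambda|x|^2-c_2'\lambda|v|^4$ and the cross term are unchanged, so the identical Young argument yields $\varphi_p(m)\to-\infty$ at infinity, hence $\varphi_p(m)\le-a+M\1_{B_R}$ for suitable $a,M,R$. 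Plugging (C1)--(C4) and this last bound into Theorem \ref{T12} proves Theorem \ref{T11}(2).

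I do not expect a genuine obstacle: the content of Theorem \ref{T61} is a verification. The only point requiring care is quantitative bookkeeping — fixing $\lambda$ small enough, with the Young parameters chosen first and independently of $\lambda$, so that the cross term $\propto\lambda\,xv$ and the $O(\lambda)$, $O(\lambda^2)$ diffusion contributions split into a small multiple of $\lambda|x|^2$ (absorbed by $-c_1\lambda|x|^2$) plus an $O(\lambda)$ multiple of $|v|^2$ (absorbed by $-c_2\lambda|v|^4$). This always closes because the only wrongly-signed high-order term, the cubic $\propto\lambda|v|^3$, carries the same factor $\lambda$ as the dominating quartic; it is also what forces the powers in $H$, namely $|v|^4$ because $B$ is cubic and only $|x|^2$ because the $x$-drift $\Phi$ is linear.
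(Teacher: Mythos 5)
Your proposal is correct and follows essentially the same route as the paper: verify (C1)--(C4) and the $\varphi_p$ bound for the weight $m=e^{\lambda(|x|^2+|v|^2)}$ and $H=|v|^4+|x|^2+1$ by explicit polynomial computation, with the quartic $-c\lambda|v|^4$ and $-c\lambda|x|^2$ dominating all lower-order terms for $\lambda$ small. The only (immaterial) difference is the normalization: the paper rescales the velocity variable $w=bv$, after which the cross terms $r\,x\cdot v$ cancel exactly, whereas your time rescaling leaves a residual $2\lambda(1-1/b)\,x\cdot v$ that you correctly absorb by Young's inequality.
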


\begin{proof}
We recall the kinetic Fitzhugh-Nagumo equation 
\beqn
\nonumber
\partial_t f :=\LL f  = \partial_x(A(x, v) f ) + \partial_v(B(x, v ) f ) +  \partial^2_{vv} f,
\eeqn
with 
\beqn
\nonumber
A(x, v) = ax- bv, \quad B(x, v) =v(v-1) (v-c) +x,
\eeqn
by a change of variable $w=bv$, the equation is changed to
\beqn
\nonumber
\partial_t f :=\LL f  = \partial_x(A(x, v) f ) + \partial_v(B(x, v ) f ) + \frac 1 {b^2} \partial^2_{vv} f,
\eeqn
with 
\beqn
\nonumber
A(x, v) = ax- v, \quad B(x, v) =\frac 1 {b^3} v(v-b) (v-bc) +x,
\eeqn
we have
\beqn
\nonumber
\LL^*f = -A(x,v) \partial_x f - B(x,v )\partial_v f +\frac 1 {b^2} \partial_{vv} f,
\eeqn
for some $a, b , c >0$. This time we have
\bear
\nonumber
\frac {\LL^*m} {m} &=& v \cdot \frac {\nabla_x m} m - a x \cdot \frac {\nabla_x m} {m}   + \frac 1 {b^2} \frac{\Delta_v m} {m} 
\\ \nonumber
 &&- (\frac 1 {b^3} v(v-b) (v-bc) +x) \cdot \frac{\nabla_v m } {m}.
\eear
We can take $m=e^{\frac r 2(|x|^2 +|v|^2)}$, with $r>0$ to be fixed later, then we have
\beqn
\nonumber
\frac {\nabla_x m } {m} = r x, \quad \frac {\nabla_v m } {m} = r v, \quad \frac {\Delta_v m } {m} = r + r^2|v|^2,
\eeqn
we then compute
\bear
\nonumber
\frac {\LL^*m} {m} &=& r x \cdot v - ar |x|^2 + \frac r {b^2} + \frac {r^2} {b^2}|v|^2 
\\ \nonumber
&-&\frac 1 {b^3} |v|^2(v-b) (v-bc) - r  x \cdot v 
\\ \nonumber
&=& -ar|x|^2 - \frac 1 {b^3} |v|^4 + M_1 v^3 +M_2|v|^2 + M_3,
\eear
for some constant $M_1, M_2, M_3>0$, so the Lyapunov condition (C1)
\beqn
\nonumber
\LL^*m \le -\alpha m +b,
\eeqn
is satisfied for some $\alpha, b>0$, similarly
\bear
\nonumber
\phi_2(m)  &=& v \cdot \frac {\nabla_x m} m - a x \cdot \frac {\nabla_x m} {m} + \frac a 2  +\frac 1 {b^2} \frac {|\nabla_v m|^2} {m^2}+ \frac 1 {b^2} \frac{\Delta_v m} {m} 
\\ \nonumber
 &&- (\frac 1 {b^3} v(v-b) (v-bc) +x) \cdot \frac{\nabla_v m } {m} + \frac 1 {2b^3} (3v^2 + 2b(1+c) v +b^2c),
\eear
and this time we have
\bear
\nonumber
\phi_2(m) &=& r x \cdot v - ar |x|^2 + \frac a 2 + \frac {r^2} {b^2}  |v|^2+ \frac r {b^2} + \frac {r^2} {b^2}|v|^2 
\\ \nonumber
&-&\frac 1 {b^3} |v|^2(v-b) (v-bc) - r  x \cdot v + \frac 1 {2b^3} (3v^2 + 2b(1+c) v +b^2c)
\\ \nonumber
&=& -ar |x|^2 - \frac 1 {b^3} |v|^4 + K_1 v^3 +K_2|v|^2 + K_3v +K_4,
\eear
for some constants $K_1, K_2, K_3, K_4$, if we take 
\beqn
\nonumber
H = |v|^4 +|x|^2+1,
\eeqn
it's easily seen that we have
\beqn
\nonumber
-C_1 H \le \phi_2(m)  \le -C_2 H  +C_3,
\eeqn
for some $C_1, C_2, C_3 >0$, which is just condition (C2). And it's easily seen that for any integer $n \ge 2$ fixed, for any $\epsilon>0$ small, we can find a constant $C_{\epsilon, n} $ such that
\beqn
\nonumber
\sum_{k=1}^n |D_x^k (ax )| +\sum_{k=1}^n |D_{x, v}^k (\frac 1 {b^3} v(v-b) (v-bc) +x)| \le P_1|v|^2 + P_2 |v| + P_3 \le C_{n,\epsilon} + \epsilon H,
\eeqn
with $P_1, P_2, P_3>0$ constant, so condition (C3) is also satisfied. Since
\beqn
\nonumber
\frac {\Delta_{x, v} m} {m }  =  2r + r^2|v|^2 +r^2|x|^2\ge 0,
\eeqn
all the conditions of Theorem \ref{T12} is satisfied, we finally compute
\bear
\nonumber
\varphi_{\infty}(m)  &=& v \cdot \frac {\nabla_x m} m - ax \cdot \frac {\nabla_x m} {m} +  a +  \frac 2 {b^2} \frac {|\nabla_v m|^2} {m^2}- \frac 1 {b^2}\frac{\Delta_v m} {m} 
\\ \nonumber
&&- (\frac 1 {b^3} v(v-b) (v-bc) +x)  \cdot \frac{\nabla_v m } {m} + \frac 1 {b^3} (3v^2 + 2b(1+c) v +b^2c).
\eear
We have
\bear
\nonumber
\varphi_{\infty}(m) &=& r x \cdot v - ar |x|^2 + a + \frac {2r^2} {b^2}  |v|^2 - \frac r {b^2} - \frac {r^2} {b^2}|v|^2 
\\ \nonumber
&-&\frac 1 {b^3} |v|^2(v-b) (v-bc) - r  x \cdot v + \frac 1 {b^3} (3v^2 + 2b(1+c) v +b^2c)
\\ \nonumber
&=& -ar |x|^2 - \frac 1 {b^3} |v|^4 + K_1 v^3 +K_2|v|^2 + K_3v +K_4,
\eear
for some constants $K_1, K_2, K_3, K_4$, it's easily seen that
\beqn
\nonumber
\varphi_{\infty}(m) \le C+M\1_{B_R},
\eeqn
for some $C, M, R>0$, the proof is finished.
\end{proof}

Then we come to find a weight function  $m$ and a function $H \ge 1$ for the kinetic Fokker-Planck equation with general force.

\begin{theo}\label{T62}
Denote $\LL$ the operator of the kinetic Fokker-Planck equation (\ref{E11}), then there exist a weight  function $m$ and a function $H$  satisfies Theorem \ref{T12}.
\end{theo}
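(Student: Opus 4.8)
The plan is to deduce the statement from Theorem \ref{T12}: all the work consists in rewriting (\ref{E11}) in the form (\ref{E13}) and in exhibiting a weight $m$ and a function $H\ge1$ for which (C1)--(C4) and the $\varphi_p$-inequality hold. Writing the free-transport term as $\textup{div}_x(-vf)$ and noting that both the force $\nabla_xV$ and the friction $\textup{div}_v(\nabla_vW(v)f)$ live on the $v$-variable, (\ref{E11}) is of the form (\ref{E13}) with
\[
A(x,v)=-v,\qquad \Phi\equiv0,\qquad B(x,v)=\nabla_xV(x)+\nabla_vW(v),\qquad K=1,
\]
and one may take $W(x,v)=v\cdot\nabla_xV(x)+\langle v\rangle^\beta/\beta$, so that $\nabla_vW(x,v)=B(x,v)$. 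Hence $\Phi$ is trivially Lipschitz ($M=0$) and the remaining structural hypotheses of Theorems \ref{T12} and \ref{T41} hold automatically.

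In the spirit of Remark \ref{R12} I then take
\[
m=e^{\lambda H_1},\qquad H_1=\tfrac12|v|^2+V(x)+\epsilon\,v\cdot\nabla_x\langle x\rangle,\qquad H=\langle v\rangle^\beta+\langle x\rangle^{\gamma-1}+1,
\]
with $0<\lambda\ll\epsilon\ll1$ to be fixed at the end. Since $|v\cdot\nabla_x\langle x\rangle|\le|v|$ and $V\ge1/\gamma$, one has $H_1\ge1/\gamma-\epsilon^2/2>0$, so $m\ge1$, and $H_1\to\infty$ as $|(x,v)|\to\infty$, so $m\to\infty$, as demanded by (H1) of Theorem \ref{T21}.

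Because $m=e^{\lambda H_1}$, each of $\LL^*m/m$, $\phi_2(m)$, $\varphi_p(m)$ equals $\lambda$ times a first-order expression plus $\lambda^2|\nabla_vH_1|^2$, and $\Delta_{x,v}m/m=\lambda\Delta_{x,v}H_1+\lambda^2|\nabla_{x,v}H_1|^2$. The role of the kinetic part $\tfrac12|v|^2$ is that the cross term $v\cdot\nabla_xV$ produced by the free transport acting on $V(x)$ is exactly cancelled by the force $\nabla_xV$ acting on $\tfrac12|v|^2$; what survives in $\LL^*H_1$ is the coercive term $-|v|^2\langle v\rangle^{\beta-2}\sim-\langle v\rangle^\beta$ in $v$ (here $\beta\ge2$), the coercive term $-\epsilon\,\nabla_xV\cdot\nabla_x\langle x\rangle=-\epsilon|x|^2\langle x\rangle^{\gamma-3}\sim-\epsilon\langle x\rangle^{\gamma-1}$ in $x$ coming from the twist, together with remainders — $v^{T}D^2\langle x\rangle\,v=O(|v|^2/\langle x\rangle)$, $\lambda|\nabla_vH_1|^2=O(\lambda|v|^2)$, a twisted-friction term $O(\epsilon\langle v\rangle^{\beta-1})$, and bounded terms — which are absorbed into the two coercive terms after choosing first $\epsilon$, then $\lambda$, small. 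This gives $\LL^*m/m\le-\alpha$ off a large ball, i.e. (C1). The same computation, with $p$-dependent coefficients now multiplying only the bounded quantities $\Delta_vH_1=d$ and $\textup{div}_v B=\Delta_vW=O(\langle v\rangle^{\beta-2})$, yields $\phi_2(m)\le-C_2H+C_3$ and, uniformly in $p\in[1,\infty]$, $\varphi_p(m)\le-a+M\1_{B_R}$; the lower bound $\phi_2(m)\ge-C_1H$ is immediate since every term of $\phi_2(m)$ is $O(\langle v\rangle^\beta+\langle x\rangle^{\gamma-1}+1)$. In (C3) the $\Phi$-term vanishes, while for $B=\nabla_xV+\nabla_vW$ we use $|D^k_x\nabla_xV|=O(\langle x\rangle^{\gamma-1-k})$ and $|D^k_v\nabla_vW|=O(\langle v\rangle^{\beta-1-k})$ together with Young's inequality ($\gamma-2<\gamma-1$, $\beta-2<\beta$). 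Finally (C4) holds because $\Delta_{x,v}H_1=d+\Delta_xV+\epsilon\Delta_x(v\cdot\nabla_x\langle x\rangle)$ with $\Delta_xV\ge0$, and the only unbounded piece $\epsilon\Delta_x(v\cdot\nabla_x\langle x\rangle)=O(\epsilon|v|/\langle x\rangle^2)$ is dominated by $\lambda^2|\nabla_vH_1|^2\gtrsim\lambda^2|v|^2$, whence $\Delta_{x,v}m/m\ge-C_4$.

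The only genuinely delicate point is the bookkeeping in the previous paragraph: one must organize the absorption of every remainder into the two coercive terms $-\lambda\langle v\rangle^\beta$ and $-\lambda\epsilon\langle x\rangle^{\gamma-1}$ in the correct order of the small parameters (first $\epsilon$ to beat the Hessian term $v^{T}D^2\langle x\rangle\,v$ and the twisted friction, then $\lambda$ to beat the $\lambda^2$ gradient squares), and to check that this is possible uniformly in $p\in[1,\infty]$ and down to the borderline exponents $\gamma=1$ (where the $x$-coercivity degenerates to a constant, still an admissible $\alpha$) and $\beta=2$ (where $\langle v\rangle^{\beta-2}$ is constant). Once (C1)--(C4) and the $\varphi_p$-inequality are established, Theorem \ref{T12} applies verbatim and delivers the steady state $G$ together with the exponential convergence in every $L^p(m)$, which is the content of the present theorem.
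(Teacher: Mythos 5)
Your proposal is correct and follows essentially the same route as the paper: the same twisted weight $m=e^{\lambda H_1}$ with $H_1=\tfrac12|v|^2+V(x)+\epsilon\,v\cdot\nabla_x\langle x\rangle$ and the same $H=\langle v\rangle^\beta+\langle x\rangle^{\gamma-1}+1$, with (C1)--(C4) and the $\varphi_p$ bound obtained from the cancellation of the $v\cdot\nabla_xV$ cross terms, the coercive contributions $-\nabla_vW(v)\cdot v$ and $-\epsilon\nabla_xV\cdot\nabla_x\langle x\rangle$, and absorption of lower-order remainders by choosing $\epsilon$ and then $\lambda$ small. The only cosmetic difference is that you check $\varphi_p$ uniformly in $p\in[1,\infty]$ and spell out the identification of $A$, $\Phi$, $B$, $W$ with the form (\ref{E13}), which the paper delegates to Remark \ref{R12} and to the explicit $p=\infty$ computation.
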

\begin{proof}
First we have
\beqn
\nonumber
\LL^* f = v \cdot \nabla_x f - \nabla_x V(x) \cdot \nabla_v f +\Delta_v f - \nabla_v W(v) \cdot \nabla_v f,
\eeqn
 denote
\beqn
\nonumber
H_1=\frac {|v|^2}{2} + V(x)+ \epsilon v \cdot \nabla_x  \langle x \rangle, \quad m=e^{\lambda H_1},
\eeqn
so we have
\beqn
\nonumber
\frac {\LL^* m} {m} =  \lambda (v \cdot \nabla_x H_1 - \nabla_x V(x) \cdot \nabla_ v H_1+ \Delta_v H_1 + \lambda |\nabla_v H_1|^2 - \nabla_v W(v) \cdot \nabla_v H_1).
\eeqn
We easily compute
\beqn
\nonumber
\nabla_v H_1 = v+ \epsilon \nabla_x  \langle x \rangle, \quad \nabla_x H_1 = \nabla_x V(x) +\epsilon v \cdot \nabla_x^2   \langle x \rangle, \quad \Delta_v H_1=d,
\eeqn
and since
\beqn
\nonumber
\nabla_x^2 \langle x \rangle \le CI,
\eeqn
where $I$ is the $d \times d$ identity matrix, we have
\bear
\nonumber
\frac {\LL^* m} {m} &=&  \lambda( v \cdot \nabla_x V(x) +\epsilon v \nabla_x \langle x \rangle^2 v- \nabla_x V(x) \cdot v -\nabla_x V(x) \cdot \nabla_x \langle x \rangle+d
\\ \nonumber
&&+ \lambda |v+ \epsilon \nabla_x  \langle x \rangle|^2 - \epsilon \nabla_v W(v) \cdot  \nabla_x \langle x \rangle - \nabla_v W(v) \cdot v
\\ \nonumber
&&\le C(\lambda^2|v|^2 +\lambda |\nabla W(v)| )-\lambda \nabla_x V(x) \cdot \nabla_x \langle x \rangle -\lambda \nabla_v W(v) \cdot v,
\eear
for some constant take $\lambda>0$ small, we conclude
\beqn
\nonumber
\LL^* m \le -C_1 H m+C_2,
\eeqn
for some constant $C_1, C_2>0$, with $H =  \langle v \rangle^\beta + \langle x  \rangle^{\gamma-1}+1$, then the Lyapunov condition (C1) follows. For the second inequality, by Lemma \ref{L31} we have
\bear
\nonumber
\phi_2 (m)&=&  \lambda (v \cdot \nabla_x H_1+ \nabla_x V(x) \cdot \nabla_ v H_1 + \Delta_v H_1 
\\ \nonumber
&&+ 2\lambda |\nabla_v H_1|^2 - \nabla_v W(v) \cdot \nabla_v H_1)+\frac 1 2\Delta_v W(v),
\eear
we compute
\bear
\nonumber
\phi_2 (m)&=&  \lambda( v \cdot \nabla_x V(x) +\epsilon v  \nabla_x \langle x \rangle v- \nabla_x V(x) \cdot v -\nabla_x V(x) \cdot \nabla_x \langle x \rangle+d
\\ \nonumber
&& +2 \lambda |v+ \epsilon \nabla_x  \langle x \rangle|^2 - \epsilon \nabla_v W(v) \cdot  \nabla_x \langle x \rangle - \nabla_v W(v) \cdot v) + \frac 1 2\Delta_v W(v)
\\ \nonumber
&\le& C(\lambda^2|v|^2 +\lambda |\nabla W(v)|+ |\Delta_x W(v)| )-\lambda \nabla_x V(x) \cdot \nabla_x \langle x \rangle -\lambda \nabla_v W(v) \cdot v,
\eear
and we still have
\beqn
\nonumber
-C_1 H \le \phi_2(m) \le -C_2 H  +C_3,
\eeqn
for some constant $C_1, C_2, C_3>0$, thus condition(C2) is proved. It's easily seen that for any integer $n \ge 2$ fixed, for any $\epsilon>0$ small, we can find a constant $C_{\epsilon, n} $ such that
\beqn
\nonumber
\sum_{k=1}^n |D_x^k \nabla_x V(x) | +\sum_{k=1}^n |D_{x, v}^k \nabla_v W(v)|  = \sum_{k=2}^{n+1} |D_x^k V(x) | +\sum_{k=2}^{n+1} |D_{x, v}^k W(v)| \le C_{n,\epsilon} + \epsilon H,
\eeqn
by the definition of $V(x)$ and $W(v)$, so condition (C3) is also satisfied. For the last condition
\beqn
\nonumber
\frac {\Delta_{x, v} m} {m }  = \lambda^2|\nabla_x H_1|^2+\lambda^2|\nabla_v H_1|^2 +\lambda \Delta_x H_1  + \lambda \Delta_v H_1.
\eeqn
For the term $\Delta_x H_1$ we compute
\beqn
\nonumber
\Delta_x H_1= \nabla_x V(x) + \epsilon  v \cdot  \nabla \Delta \langle x \rangle \ge -L_1 - L_2|v|,
\eeqn
for some constant $L_1, L_2>0$, and 
\beqn
\nonumber
|\nabla_v H_1|^2= |v+ \epsilon \nabla_x  \langle x \rangle|^2 \ge \frac {|v|^2} 2-L_3,
\eeqn
for some constant $L_3>0$, since 
\beqn
\nonumber
\Delta_v H_1 =d \ge 0, 
\eeqn
we conclude that
\beqn
\nonumber
\frac {\Delta_{x, v} m} {m }  \ge C,
\eeqn
for some constant $C$, so all the conditions are satisfied. We finally compute
\bear
\nonumber
\varphi_{\infty}(m)  &=& \lambda( v \cdot \nabla_x V(x) +\epsilon v  \nabla_x \langle x \rangle v- \nabla_x V(x) \cdot v -\nabla_x V(x) \cdot \nabla_x \langle x \rangle - d
\\ \nonumber
&& + \lambda |v+ \epsilon \nabla_x  \langle x \rangle|^2 - \epsilon \nabla_v W(v) \cdot  \nabla_x \langle x \rangle - \nabla_v W(v) \cdot v) +\Delta_v W(v)
\\ \nonumber
&\le& C(\lambda^2|v|^2 +\lambda |\nabla W(v)|+ |\Delta_x W(v)| )-\lambda \nabla_x V(x) \cdot \nabla_x \langle x \rangle -\lambda \nabla_v W(v) \cdot v ,
\eear
the proof is thus finished.
\end{proof}

    \begin{appendix}

\bigskip
\section{Proof of spreading of positivity}
\label{secAA}
\setcounter{equation}{0}
\setcounter{theo}{0}

In this section, we will use the notation
\beqn
\nonumber
\bar{B}_r(x_0, v_0) = \{ (x, v) \in \R^d \times \R^d : |v-v_0| \le r, |x-x_0| \le r^3\},
\eeqn
and $\bar{B}_r$ will stand for $\bar{B}_r(x_0, v_0)$. 
Before proving the theorem on spreading of positivity, we first prove a useful lemma.

\begin{lem}\label{LA1}
Define $X_t(x_0, v_0)$ (abbreviated $X_t$ in the sequel) in this way, consider the ordinary differential equation
\beqn
\nonumber
\frac {d x} {dt} = v_0 + \Phi(x),
\eeqn
and denote by $X_t(x_0, v_0)$ the solution to this ordinary differential equation at time $t$ with $x(0) = x_0$, where $\Phi(x)$ is Lipschitz
\beqn
\nonumber
|\Phi(x) - \Phi(y)| \le M |x-y|,\quad \forall  x, y \in \R^d.
\eeqn
with loss of generality we assume $M \ge 1$. Then we have, for any $(x_0, v_0) \in \R^d$ fixed, $t \in[0, \min \{\frac {log 2} {M}, 1\})$, we have
\beqn
\nonumber
|X_t -x_0| \le t  (M+1)^2 (|v_0|+|x_0| +|\Phi(0)|),
\eeqn
\end{lem}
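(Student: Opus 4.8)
The plan is to write the solution in integral form and close it with a Grönwall estimate. Since $\Phi$ is assumed globally Lipschitz, the ODE $\dot x = v_0 + \Phi(x)$ has a unique global-in-time solution, so $X_t$ is well defined on all of $[0,\infty)$ and in particular on the interval in the statement; there is no finite-time blow-up to rule out. Integrating the equation from $0$ to $t$ gives
\[
X_t - x_0 = \int_0^t \bigl(v_0 + \Phi(X_s)\bigr)\, ds,
\]
and then, using $|\Phi(y)| \le |\Phi(0)| + M|y|$ together with $|X_s| \le |x_0| + |X_s - x_0|$, one obtains
\[
|X_t - x_0| \le t\bigl(|v_0| + |\Phi(0)| + M|x_0|\bigr) + M\int_0^t |X_s - x_0|\, ds .
\]

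Next I would apply the integral form of Grönwall's lemma to $u(t) := |X_t - x_0|$. With the nondecreasing forcing term $a(t) := t\bigl(|v_0| + |\Phi(0)| + M|x_0|\bigr)$ this yields $u(t) \le a(t)\, e^{Mt}$. On the interval $t < \log 2 / M$ we have $e^{Mt} < 2$, hence
\[
|X_t - x_0| \le 2t\bigl(|v_0| + |\Phi(0)| + M|x_0|\bigr), \qquad t \in [0, \min\{\log 2/M, 1\}).
\]

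Finally I would absorb the constants. Since $M \ge 1$, each coefficient on the right is at most $M$, so $|v_0| + |\Phi(0)| + M|x_0| \le M\bigl(|v_0| + |x_0| + |\Phi(0)|\bigr)$; and $2M \le (M+1)^2$, which is just $M^2 + 1 \ge 0$. Combining these two observations gives the claimed bound
\[
|X_t - x_0| \le t\,(M+1)^2\bigl(|v_0| + |x_0| + |\Phi(0)|\bigr).
\]
The argument is essentially routine, so there is no real obstacle; the only things needing mild care are invoking global existence from the global Lipschitz hypothesis and keeping the bookkeeping of constants loose enough that the crude estimate $2M \le (M+1)^2$ suffices. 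The extra restriction $t < 1$ in the hypothesis is not needed for this lemma and is presumably retained for uniformity with its later use in the proof of Theorem \ref{T41}.
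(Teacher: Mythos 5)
Your proof is correct. Both you and the paper rely on the global Lipschitz hypothesis plus Grönwall, but the organization differs slightly: the paper first applies Grönwall to $|X_t|$ to get a uniform bound on $|X_t|$ for $t<\log 2/M$, then deduces a uniform bound on $|\dot X_t|$, and finally integrates to obtain $|X_t-x_0|\le t\sup|\dot X_t|$; you instead write the integral equation for $X_t-x_0$ and apply the integral form of Grönwall directly to $u(t)=|X_t-x_0|$, which collapses those three steps into one and gives the bound immediately after absorbing constants via $M\ge 1$ and $2M\le (M+1)^2$. Your version is slightly leaner, and the remark that the constraint $t<1$ is superfluous here (retained only for use downstream in Theorem \ref{T41}) is accurate.
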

\begin{proof} 
Since $\Phi(x)$ is Lipschitz, the existence and uniqueness of $X_t$ is satisfied. First by the definition of $X_t$ we have
\beqn
\nonumber
\frac {d|X_t|} {dt} \le | \frac {dX_t} {dt}| \le |v_0| +M(|X_t|+|\Phi(0)|),
\eeqn
by Gr\"onwall's lemma we have
\beqn
\nonumber
|X_t| \le e^{Mt}|x_0| +\frac {1} M (e^{Mt} - 1)(|v_0| +M|\Phi(0)|) ,
\eeqn
since $M\ge1$, so for $t \in(0, \frac {log 2} M)$ we have
\beqn
\nonumber
|X_t| \le 2|x_0| +|v_0| +M|\Phi(0)|,
\eeqn
so
\bear
\nonumber
 | \frac {dX_t} {dt}| \le |v_0| +M(|X_t|+|\Phi(0)|) &\le& (M+1) |v_0| +2M|x_0| +M^2|\Phi(0)| 
\\ \nonumber
&\le& (M+1)^2 (|v_0|+|x_0| +|\Phi(0)|),
\eear
for any $t \in(0, \frac {log 2} {M} )$, the lemma is thus proved.
\end{proof}
Use this $X_t$, we come to construct a subsolution which is useful in our proof.

\begin{lem}\label{LA2}
Define operator $\LL$ as
 \beqn
 \nonumber
\LL =\frac{ \partial} {\partial t} + (v + \Phi(x)) \cdot \nabla_x  - \Delta_v,
 \eeqn
where $\Phi(x)$ is Lipschitz
\beqn
\nonumber
|\Phi(x) - \Phi(y)| \le M |x-y|,\quad \forall  x, y \in \R^d.
\eeqn
Then for any $(x_0, v_0) \in \R^d$ fixed, define $V=(M+1)^2 (\Phi(0)+|x_0| +|v_0|)$, then for any $r>0$, $ 0<\tau < \min(1, r^3/2V, log2/M,  1/20M )$,$\alpha>1$, $\delta>0$, there exist constants $\lambda>\alpha, K>0$ which only depend on $r^2/\tau$, $V$, $M$, $\alpha$ (independent of $\delta$) and a function $\phi$ such that
\beqn
\nonumber
\LL \phi \le 0, \quad  \textup{in} \  [0, \tau) \times (\bar{B}_{\lambda r} \setminus \bar{B}_{r}),
\eeqn
and some boundary conditions
\beqn
\nonumber
\phi \le 0, \quad \textup{on} \ t=0, \quad \phi \le \delta \quad  \textup{on} \ [0, \tau ) \times \partial \bar{B}_{r}, \quad \phi \le 0 \quad \textup{on} \  [0, \tau) \times \partial \bar{B}_{\lambda r},
\eeqn
while
\beqn
\nonumber
\phi \ge K\delta \quad \textup{on} \ [\frac \tau 2, \tau) \times (\bar{B}_{\alpha r} \setminus \bar{B}_{r}).
\eeqn

\end{lem}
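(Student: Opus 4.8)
The plan is to take $\phi$ proportional to the mass carried by the fundamental solution of $\LL$ emanating from a small kinetic ball inside $\bar B_r$, so that $\LL\phi\le0$ holds for free, and to read off the four required properties from two‑sided Gaussian bounds for this fundamental solution; equivalently one may use the explicit sub‑barrier of \cite{V}, Theorem A.19. The only genuinely new ingredient compared with the classical case is the treatment of the merely Lipschitz, \emph{unbounded} drift $\Phi$, and the three smallness conditions imposed on $\tau$ are exactly what makes this perturbation harmless.

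More precisely, let $P_\LL(t,x,v;y,w)$ denote the fundamental solution of $\LL=\partial_t+(v+\Phi(x))\cdot\nabla_x-\Delta_v$ (which exists, is positive, and solves $\LL_{(t,x,v)}P_\LL=0$ for $t>0$), set
$$u(t,x,v):=\int_{\bar B_{r/2}(x_0,v_0)}P_\LL(t,x,v;y,w)\,\mathrm{d}y\,\mathrm{d}w,\qquad \phi:=\delta\,(u-c),$$
with $c\in(0,1)$ to be fixed. Then $\LL\phi=\delta\,\LL u=0\le0$ on the whole annular cylinder, $0\le u\le1$ by the maximum principle, $u(0,\cdot)=\1_{\bar B_{r/2}}$ vanishes on $\bar B_{\lambda r}\setminus\bar B_r$ so that $\phi\le-\delta c\le0$ at $t=0$, and $\phi\le\delta(1-c)\le\delta$ on $[0,\tau)\times\partial\bar B_r$. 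It only remains to bound $u$ from above on $[0,\tau)\times\partial\bar B_{\lambda r}$ and from below on $[\tau/2,\tau)\times(\bar B_{\alpha r}\setminus\bar B_r)$.

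For this I would invoke the two‑sided Gaussian estimates for Kolmogorov‑type operators. The point of the hypotheses on $\tau$ is that, by Lemma \ref{LA1} and $\tau<r^3/2V$, the associated characteristic stays within $r^3/2$ of $x_0$ on $[0,\tau)$, and more generally --- using $\tau<\log2/M$ and $\tau<1/20M$ as in the proof of Lemma \ref{LA1}, together with the elementary identities $Mr^2=(r^2/\tau)(M\tau)\le(r^2/\tau)/20$ and $M|x_0|\tau\le V\tau<r^3/2$ --- the whole region $\bar B_{\lambda r}(x_0,v_0)$ is a set on which $\Phi$ is effectively bounded, so that all constants in the Gaussian bounds can be written through $r^2/\tau$, $\alpha$, $M$ and $V$ only, not through $r$ and $\tau$ separately. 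With $\mathfrak d$ the kinetic (Kolmogorov) distance these bounds read $P_\LL(t,x,v;y,w)\le C\,t^{-2d}e^{-c\,\mathfrak d^2/t}$ and $P_\LL(t,x,v;y,w)\ge c\,t^{-2d}e^{-C\,\mathfrak d^2/t}$ for $(y,w)\in\bar B_{r/2}$ on the relevant set. On $[\tau/2,\tau)\times(\bar B_{\alpha r}\setminus\bar B_r)$ one has $\mathfrak d\le C\alpha r$, hence $u\ge c\,(r^2/\tau)^{-2d}e^{-C\alpha^2(r^2/\tau)}=:\gamma_1>0$, a quantity depending only on $r^2/\tau,\alpha,M,V$; fix $c:=\gamma_1/2$. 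On $\partial\bar B_{\lambda r}$ one has $\mathfrak d\gtrsim\lambda r$, hence $u\le C\,(r^2/\tau)^{2d}e^{-c\lambda^2(r^2/\tau)}$, which tends to $0$ as $\lambda\to\infty$; choose $\lambda=\lambda(r^2/\tau,\alpha,M,V)>\alpha$ large enough that this is $\le c$, so that $\phi\le0$ there (if one prefers an exact vanishing, replace $u$ by $u$ times a fixed cut‑off supported in $\bar B_{\lambda r}$, the extra terms being negligible because $u$ is exponentially small in the transition region). Then $\phi\ge\delta(\gamma_1-\gamma_1/2)=:K\delta$ on the bulk, with $K=\gamma_1/2=K(r^2/\tau,\alpha,M,V)>0$, which is what was claimed.

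The main obstacle is precisely the derivation of these two‑sided Gaussian bounds with constants uniform in the above sense: the lower bound is the substantial input (a Harnack‑type statement), and because $\Phi$ is only Lipschitz and unbounded while the $x$‑scale of the kinetic balls is the large quantity $r^3$, one obtains it only by confining everything, through the three smallness conditions on $\tau$, to a region on which $\Phi$ is as good as bounded, thereby trading the dangerous quantities $Mr^2$, $M|x_0|$ and so on for powers of $r^2/\tau$ and $\alpha$; the merely Lipschitz regularity of $\Phi$ also rules out the smoother barriers available for regular forces. Everything else is routine bookkeeping, and the comparison principle that converts $\phi$ into the spreading statement of Theorem \ref{T41} is carried out in its proof.
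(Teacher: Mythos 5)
Your overall architecture (a subsolution on the annular cylinder with the four boundary/interior conditions, then comparison) is the same as the paper's, but the way you produce the subsolution is where the proof actually lives, and that part is missing. The paper writes down an explicit barrier $\phi=\delta e^{-\mu Q}-\epsilon$ with
\[
Q(t,x,v)=a\frac{|v-v_0|^2}{2t}-b\frac{(v-v_0,\,x-X_t)}{t^2}+c\frac{|x-X_t|^2}{2t^3},
\]
where $X_t$ is the characteristic of $\dot x=v_0+\Phi(x)$ from Lemma \ref{LA1}, and verifies $\LL\phi\le0$ by a direct computation in which the Lipschitz perturbation $\Phi(x)-\Phi(X_t)$ is absorbed using exactly the smallness conditions $\tau\le 1/20M$, $\tau\le r^3/2V$, $\tau\le\log 2/M$; the constants $a,b,c,\mu,\lambda,K$ then visibly depend only on $d,M,V,r^2/\tau,\alpha$. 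You instead take $\phi=\delta(u-c)$ with $u$ the mass of the fundamental solution $P_\LL$ started from $\bar B_{r/2}$, and you reduce everything to two-sided Gaussian bounds on $P_\LL$ with constants depending only on $r^2/\tau,\alpha,M,V$. You correctly identify this as ``the substantial input,'' but you do not supply it, and it is not an off-the-shelf citation: the lower Gaussian (Harnack-type) bound for a Kolmogorov operator with an \emph{unbounded, merely Lipschitz} drift, localized to anisotropic kinetic cylinders and with constants uniform under the $(r,\tau)$-scaling, is at least as hard as --- and essentially equivalent to --- the spreading-of-positivity statement this lemma is designed to prove by elementary means. As written, the argument is circular in spirit: the quantitative content of Lemma \ref{LA2} has been relocated into an unproved kernel estimate.

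There is a second, related gap: with $\Phi$ only Lipschitz the operator does not have smooth coefficients, so you cannot invoke H\"ormander hypoellipticity to assert that a positive, smooth fundamental solution $P_\LL$ exists and satisfies $\LL_{(t,x,v)}P_\LL=0$ classically; even the qualitative starting point of your construction needs justification. (You note that low regularity of $\Phi$ ``rules out the smoother barriers,'' but it equally threatens the existence and regularity of $P_\LL$.) The paper's quadratic barrier avoids both problems because it only uses the flow $X_t$, which requires nothing more than the Lipschitz bound. Minor additional point: your distance bookkeeping should track both $r^2/\tau$ and $r^6/\tau^3$ (the $x$- and $v$-scales enter differently, as in the paper's choice of $\epsilon$ and $\lambda$); since $r^6/\tau^3=(r^2/\tau)^3$ this does not change the dependence of the constants, but the exponents in your Gaussian estimates are not correct as stated.
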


\begin{proof} This proof is similar to the proof in \cite{V} Appendix A. 22.
For $t \in  (0, \tau]$ and $(x, v) \in \R^d \setminus \bar{B}_r$ let
\beqn
 \nonumber
 Q(t, x, v) = a \frac {|v - v_0|^2} {2t} - b \frac {( v- v_0, x-X_t(x_0, v_0) )} {t^2}  +c \frac {|x - X_t(x_0, v_0)|^2} {2t^3},
\eeqn
where $a, b, c > 0$ will
be chosen later on, and we define $X_t(x_0, v_0)$ (abbreviated $X_t$ in the sequel) in this way, consider the ordinary differential equation
\beqn
\nonumber
\frac {d x} {dt} = v_0 + \Phi(x),
\eeqn
and denote by $X_t(x_0, v_0)$ the solution to this ordinary differential equation at time $t$ with $x(0) = x_0$.  Let further
\beqn
\nonumber
\phi(t, x, v) = \delta e^{- \mu Q(t, x, v)} - \epsilon,
\eeqn
where $\mu, \epsilon > 0$ will be chosen later on. Let us assume $b^2 < ac$, so that
$Q$ is a positive definite quadratic form in the two variables $v - v_0$ and $x - X_t$. Then
\beqn
\nonumber
\LL \phi = -\mu \delta e^{- \mu Q} \AA(Q),
\eeqn
where
\beqn
\nonumber
\AA(Q) = \partial_t Q + (v + \Phi(x)) \cdot \nabla_x Q -\Delta_v Q +\mu |\nabla_v Q|^2.
\eeqn
By computation,
\bear
\nonumber
\AA(Q) &=& -a \frac {|v-v_0|^2} {2t^2} + 2b \frac {( v-v_0, x-X_t )} {t^3}  -3c \frac {|x-X_t|^2} {2t^4}
\\ \nonumber
&+& b \frac { ( v-v_0, v_0 +\Phi(X_t) )} {t^2} - c \frac {( x-X_t, v_0 +\Phi(X_t) )} {t^3}
\\ \nonumber
&-& b \frac { ( v-v_0, v +\Phi(x) )} {t^2} + c \frac {( x-X_t, v +\Phi(x) )} {t^3} - a \frac {d} {t}
\\ \nonumber
&+&  \mu |a\frac {v-v_0} {t} - b\frac {x-X_t } {t^2}|^2 
\\ \nonumber
&=& \BB ( \frac {v-v_0} {t}, \frac {x-X_t} {t^2} )+ c\frac { ( x-X_t, \Phi(x)  - \Phi(X_t) )} {2t^3}
\\ \nonumber
&-&b \frac { ( v-v_0, \Phi(x)  - \Phi(X_t) )} {t^2}  -a\frac d t,
\eear
where $\BB$ is a quadratic form on $\R^n \times \R^n$ with matrix $P \otimes I_n$,

\[ 
P=\begin{pmatrix}
{\mu a^2-\frac a 2 -b}& {-\mu ab +b +\frac c 2} \\{-\mu ab +b +\frac c 2} &{\mu b^2 - \frac {3c} 2}
\end{pmatrix}
\]
If a, b, c are given, then as  $\mu \to  \infty$
\beqn
\nonumber
\left\{ \begin{aligned} &\hbox{tr} P = \mu (a^2 + b^2) + O(1),\\
&\hbox{det}P  = \mu(\frac{3ab^2} 2 + abc -b^3 - \frac {3a^2c} 2 )+ O(1).
\end{aligned} \right.
\eeqn
Both quantities are positive if $b \ge 2a$ and $ac \gg b^2$, for example we can take $b=2a,  c > 12b$, then as $\mu \to \infty$
\beqn
\nonumber
\left\{ \begin{aligned} &\hbox{tr} P = 5\mu a^2 + O(1),\\
&\hbox{det}P  =  \mu (\frac 1 2 a^2c-a^2 b) + O(1) \ge \mu \frac 1 3 a^2c+ O(1).
\end{aligned} \right.
\eeqn
the eigenvalues of $M$ are of order $5 \mu a^2$ and $\frac c {15}$. So we
may choose a, b, c and $\mu$ so that
\beqn
\nonumber
\BB(\frac {v-v_0} {t}, \frac {x-X_t} {t^2}) \ge \frac {c} {20} (\frac {
|v - v_0|^2} {t^2} + \frac {|x - X_t|^2} {t^4}).
\eeqn
If $\tau \le \frac 1 {20M}$, we have
\beqn
\nonumber
c\frac { |( x-X_t, \Phi(x)  - \Phi(X_t) )|} {2t^3} \le \frac {Mt} {2} c\frac {|x - X_t|^2} {t^4} \le \frac c {40} \frac {|x - X_t|^2} {t^4},
\eeqn
gathering the two terms we have
\bear
\nonumber
\BB(\frac {v-v_0} {t}, \frac {x-X_t} {t^2})+ c\frac { ( x-X_t, \Phi(x)  - \Phi(X_t) )} {2t^3}  
&\ge& \frac c {40} (\frac {
|v - v_0|^2} {t^2} + \frac {|x - X_t|^2} {t^4}).
\eear
If $\tau \le 1$
\beqn
\nonumber
-b \frac { ( v-v_0, \Phi(x)  - \Phi(X_t) )} {t^2}  -a\frac d t\ge -bM^2\frac {|x - X_t|^2} {t^4} - b\frac {
|v - v_0|^2} {t^2}  -\frac {b d } {2t},
\eeqn
gathering the two terms, suppose $c \ge 80(M+1)^2 b$, we have
\beqn
\nonumber
\AA(Q)≥ \ge  \frac {bd} {2t} [ \frac {c} {40bd} (\frac {
|v - v_0|^2} {t} + \frac {|x - X_t|^2} {t^3})- 1].
\eeqn
Recall that $(x, v) \notin \bar{B}_r$, so\\
- either $|v- v_0| \ge r$, and then $\AA(Q) \ge \frac {bd} {2t}[\frac {c} {40bd } r^2/\tau - 1]$, which
is positive if $ c \ge 40bd \frac {\tau}{r^2}$;\\
- or $|x - x_0| \ge r^3$, and then by Lemma \ref{LA1}, for any $\tau \le  \min \{1, r^3/(2V ), log2/M \}$
\bear
\nonumber
\frac {|x - X_t|^2}{t^2} &\ge&  \frac {|x - x_0|^2} {t^2} -\frac {|X_t - x_0|^2} {t^2} 
\\ \nonumber
&\ge& \frac {|x - x_0|^2} {t^2}- ((M+1)^2 (|v_0|+|x_0| +|\Phi(0)|))^2 \ge \frac {r^6} {\tau^2} - V^2 \ge \frac {r^6}  {2\tau^2},
\eear
so $\AA(Q) \ge \frac {bd} {2t} [\frac {c}{40 b d} \frac {r^6}{2\tau^3} - 1]$, which is positive as soon as $c \ge
80 b d(\frac {\tau} {r^2})^3$.
so it's OK to take 
\beqn
\nonumber
c=  b \max \{12, 80(M+1)^2, 80d (\frac {\tau } {r^2} )^3, 40d \frac \tau {r^2} \}.
\eeqn
To summarize: under our assumptions there is a way to choose the
constants $a, b, c, \mu$, depending only on $d, M, V, r^2/\tau$ , satisfying $c > b > a $ and $ac > b^2$, so that 
\beqn
\nonumber
\LL \phi \le 0, \quad  \hbox{in} \  [0, \tau) \times (\bar{B}_{\lambda r} \setminus \bar{B}_{r}),
\eeqn
as soon as $ 0<\tau < \min(1, r^3/2V, log2/M,  1/20M )$.  Recall that
\beqn
\nonumber
\phi(t, x, v) = \delta e^{- \mu Q(t, x, v)} - \epsilon.
\eeqn
The boundary condition at $t = 0$ is obvious since $e^{- \mu Q(t, x, v)} $ vanishes identically at $t=0$
(more rigorously, $e^{- \mu Q(t, x, v)} $ can be extended by continuity by 0 at
$t = 0$).  The condition is also true on $[0, \tau) \times \partial \bar{B}_r$ since $\phi \le \delta$. It
remains to prove it on $[0, \tau) \times \partial \bar{B}_{\lambda r}$ . For that we estimate $Q$ from below, since $c>12b, b=2a$, it's easily to seen that for any $(t, x, v) \in [0,\tau) \times \partial \bar{B}_{\lambda r}$
\beqn
\nonumber
Q(t, x, v) \ge \frac {a} 4(\frac { |v - v_0|^2} {t} + \frac {|x-X_t|^2} {t^3}) \ge \frac {a} 4 \min (\frac {\lambda^2r^2} {\tau}, \frac {\lambda^6r^6} {2\tau^3} ) \ge \frac{a  \lambda^2} {8} \min(\frac {r^2} {\tau}, \frac {r^6} {\tau^3}).
\eeqn
Thus if we choose
\beqn
\nonumber
\epsilon = \delta \exp (- \frac {\mu a \lambda^2} {8} \min(\frac {r^2}{\tau}, \frac {r^6} {\tau^3})),
\eeqn
we make sure that $\phi = \delta e^{-\mu Q} -\epsilon \le 0$ on $[0,\tau) \times \partial \bar{B}_{\lambda r}$.
We finally come to prove the last thing, indeed, if $t \ge \tau/2$ and $(x, v) \in \bar{B}_{\alpha r} \setminus \bar{B}_{r}$ then
\beqn
\nonumber
Q(t, x, v) \le 2c(\frac {|v-v_0|^2} {t} + \frac {|x-X_t|^2} {t^3})
\le 2 c( \frac {2 \alpha^2 r^2} {\tau} + \frac {9\alpha^6 r^6} {\tau^3}) \le22 \alpha^6 c \max ( {\frac {r^2} \tau, \frac {r^6}   {\tau^3}}),
\eeqn
take $\lambda$ such that
\beqn
\nonumber
22 \times 8 \times 2 \alpha^6 c \max(\frac {r^2} {\tau}, \frac {r^6} {\tau^3} ) = a \lambda^2 \min (\frac{r^2} {\tau}, \frac  {r^6} {\tau^3}),
\eeqn
so we can find $K>0$ such that
\beqn
\nonumber
\phi  \ge \delta [\exp(-22 \alpha^6 a \mu c \max (\frac{r^2} {\tau}, \frac {r^6} {\tau^3})) - \exp( - \frac {\mu a \lambda^2} {8} \min (\frac {r^2} {\tau}, \frac {r^6} {\tau^3}))] \ge K \delta,
\eeqn
recall the relationship between $a$ and $c$, we conclude that $\lambda, K $ depends only on $r^2/\tau, M ,V, \alpha$, the proof is thus ended.
\end{proof}

\begin{theo}\label{TA1}
Let $f(t, x, v)$ be a classical nonnegative solution of
\beqn
\nonumber
\partial_t f - \Delta_v f = -(v + \Phi(x)) \cdot \nabla_x f+ A(t, x, v) \cdot \nabla_v f  + C(t, x, v) f,
\eeqn
in $[0, T) \times \Omega$, where $\Phi(x)$ is Lipschitz
\beqn
\nonumber
|\Phi(x) - \Phi(y)| \le M |x-y|,\quad \forall  x, y \in \R^d, 
\eeqn
suppose further that 
\beqn
\nonumber
A(t, x, v) = \nabla_v W(x, v),
\eeqn
for some $W(x, v)$. Define
\bear
\nonumber
D(t, x, v) =  -\frac 1 4 |\nabla_v A(t, x, v)|^2 -\frac 1 2 \textup{div}_v A(t, x, v ) +\frac 1 2(v+\Phi(x)) \cdot A(t, x, v ) + C(t, x, v),
\eear
Then for any $(x_0,v_0)$ fixed, $r>0$, $ 0<\tau < \min(1, r^3/2V, log2/M,  1/20M )$, $\alpha>1$, $\delta>0$, there exist $\lambda>0$ only depend on $r^2/\tau, \alpha, M, V$ (independent of $\delta$) such that  if $f \ge \delta > 0$ in $[0, \tau ) \times \bar{B}_r(x_0, v_0)$, then $f \ge K\delta$ in $[\tau /2,  \tau ) \times \bar{B}_{\alpha r}(x_0, v_0)$
where $K$ also depends on $\Vert D \Vert_{L^\infty(\bar{B}_{\lambda r} (x_0, v_0))}$ and $\Vert W \Vert_{L^\infty(\bar{B}_{\lambda r} (x_0, v_0))}$.
\end{theo}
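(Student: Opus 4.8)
The plan is to transform the equation so that it is driven by the pure hypoelliptic operator $\LL_0 := \partial_t + (v+\Phi(x))\cdot\nabla_x - \Delta_v$ of Lemma~\ref{LA2}, and then to compare the transformed solution with the barrier $\phi$ built in that lemma, using the minimum principle for $\LL_0$.

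I would first remove the first-order term $A\cdot\nabla_v f$ by exploiting the gradient structure $A=\nabla_v W$. Set $g := e^{W/2}f\ (\ge 0)$; a direct computation shows that the $\nabla_v g$ terms cancel and $g$ solves $\LL_0 g = -c_0\,g$ with
\[
c_0 \;=\; \tfrac14|A|^2 + \tfrac12\,\textup{div}_v A \;-\; \tfrac12 (v+\Phi)\cdot\nabla_x W \;-\; C
\;=\; -D \;+\; \tfrac12(v+\Phi)\cdot(\nabla_v W-\nabla_x W),
\]
where $D$ is the function in the statement. The key structural point is that $c_0$ coincides with $-D$ up to terms that are bounded on any compact set, so that on a ball $\bar B_{\lambda r}(x_0,v_0)$ one has $\|c_0\|_{L^\infty([0,\tau)\times\bar B_{\lambda r})}\le\kappa$ for a finite $\kappa$ controlled by $\|D\|_{L^\infty(\bar B_{\lambda r})}$ together with the values of $W,\nabla W,A,\Phi$ on $\bar B_{\lambda r}$.

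Next I would absorb $c_0$: put $h := e^{\kappa t}g\ (\ge 0)$, so $\LL_0 h = (\kappa-c_0)h\ge 0$ on $[0,\tau)\times\bar B_{\lambda r}$, i.e.\ $h$ is a \emph{supersolution} of $\LL_0$; and since $e^{\kappa t}\ge 1$ and $|W|\le\|W\|_{L^\infty(\bar B_{\lambda r})}$, the hypothesis $f\ge\delta$ on $[0,\tau)\times\bar B_r$ gives $h\ge\delta':=e^{-\frac12\|W\|_{L^\infty(\bar B_{\lambda r})}}\delta$ on $[0,\tau)\times\bar B_r$. Now apply Lemma~\ref{LA2} with the constant $\delta'$ in place of $\delta$ and the same $r,\tau,\alpha$: this fixes $\lambda>\alpha$ and $K>0$ depending only on $r^2/\tau,V,M,\alpha$ (not on $\delta$, nor on $c_0$ or $W$), and produces the barrier $\phi$ with $\LL_0\phi\le 0$ in $[0,\tau)\times(\bar B_{\lambda r}\setminus\bar B_r)$, $\phi\le 0$ at $t=0$ and on $[0,\tau)\times\partial\bar B_{\lambda r}$, $\phi\le\delta'$ on $[0,\tau)\times\partial\bar B_r$, and $\phi\ge K\delta'$ on $[\tfrac\tau2,\tau)\times(\bar B_{\alpha r}\setminus\bar B_r)$. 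Since $\lambda$ is now fixed independently of $f$, the bound $\kappa$ above is a legitimate constant on $\bar B_{\lambda r}$. On the cylinder over the anisotropic annulus $\bar B_{\lambda r}\setminus\bar B_r$ we have $\LL_0(h-\phi)\ge 0$, while on its parabolic boundary $h-\phi\ge 0$ (at $t=0$ and on $\partial\bar B_{\lambda r}$ because $\phi\le 0\le h$; on $\partial\bar B_r$ because $\phi\le\delta'\le h$). The operator $\LL_0$ obeys the minimum principle on a bounded cylinder: at an interior minimum of $(h-\phi)+\eta t$ one has $\partial_t\le 0$, $\nabla_v=0$, $\Delta_v\ge 0$, hence $\LL_0[(h-\phi)+\eta t]\le 0$, contradicting $\LL_0[(h-\phi)+\eta t]=\LL_0(h-\phi)+\eta\ge\eta>0$. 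Therefore $h\ge\phi$ throughout, so on $[\tfrac\tau2,\tau)\times(\bar B_{\alpha r}\setminus\bar B_r)$ we get $h\ge K\delta'$ and hence $f=e^{-W/2}e^{-\kappa t}h\ge K e^{-\|W\|_{L^\infty(\bar B_{\lambda r})}}e^{-\kappa\tau}\delta$; combined with $f\ge\delta$ on $[\tfrac\tau2,\tau)\times\bar B_r$ this yields $f\ge \tilde K\delta$ on $[\tfrac\tau2,\tau)\times\bar B_{\alpha r}$ with $\tilde K:=\min\{1,\,K e^{-\|W\|_{L^\infty(\bar B_{\lambda r})}}e^{-\kappa\tau}\}$, which has exactly the asserted dependencies.

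The substantive content is entirely in Lemma~\ref{LA2} (the anisotropic barrier with constants depending only on $r^2/\tau,V,M,\alpha$), which is granted; everything else is bookkeeping. I expect the one genuinely delicate point to be the order of quantification: $\lambda$ must be pinned down by the geometry alone (via Lemma~\ref{LA2}) \emph{before} the coefficient-dependent constant $\kappa$ is read off on $\bar B_{\lambda r}$, so that the localization radius does not itself depend on the coefficients of the equation. A secondary, minor point is that the comparison is performed on a non-convex (annular) spatial domain whose parabolic boundary has two components; both are covered by the boundary inequalities supplied by Lemma~\ref{LA2}, and the minimum principle for the degenerate $\LL_0$ needs only the elementary argument indicated above, the $x$-transport term dropping out at a spatial interior extremum.
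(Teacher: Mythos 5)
Your proof is correct and follows essentially the same route as the paper's: conjugate by $e^{W/2}$ to remove the $\nabla_v$ drift, multiply by $e^{\kappa t}$ to absorb the zeroth-order coefficient, and compare the resulting supersolution with the barrier of Lemma \ref{LA2} via the minimum principle on the annular cylinder. You are in fact slightly more careful than the paper on two points: the transport term really produces $\tfrac 1 2 (v+\Phi)\cdot\nabla_x W$ rather than $\tfrac 1 2 (v+\Phi)\cdot\nabla_v W$ in the zeroth-order coefficient (a slip in the paper's displayed formula for $D$, harmless since the discrepancy is bounded on the compact set $\bar{B}_{\lambda r}$ once $\lambda$ has been fixed by the geometry), and you spell out the minimum-principle comparison and the order of quantification of $\lambda$ versus $\kappa$, which the paper only asserts implicitly.
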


\begin{proof}
We first start by a taking $f =  hG$, with $G=e^{-\frac 1 2 W(x, v) }$, we have
\beqn
\nonumber
\nabla_x G = -\frac 1 2 \nabla_x W(x, v ) G, \quad  \nabla_v G = -\frac 1 2 \nabla_v W(x, v) G,
\eeqn
and
\beqn
\nonumber
 \Delta_v G = \frac 1 4|\nabla_v W(x, v)|^2-\frac 1 2 \Delta_v W(x, v),
\eeqn
the equation turns to
\bear
\nonumber
G\partial_t h &=& G \Delta_v h+ 2 \nabla_v h \cdot \nabla_v G +h\Delta_v G - G (v + \Phi(x)) \cdot \nabla_x h - h (v + \Phi(x)) \cdot \nabla_x G
\\ \nonumber
&&+G A(t, x, v) \cdot \nabla_v h +  \nabla_v G \cdot A(t, x, v) h +C(t, x, v )G h.
\eear
By the definition of $G$ we have
\beqn
\nonumber
2\nabla_v G \cdot \nabla_v h =- G A(t, x, v) \cdot \nabla_v h,
\eeqn
so the equation will turns to
\beqn
\nonumber
\partial_t h = \Delta_v h  -(v + \Phi(x)) \cdot \nabla_x h + D(t, x, v) h,
\eeqn
with 
\bear
\nonumber
D(t, x, v) &=& \frac 1 4|\nabla_v W(x, v)|^2-\frac 1 2 \Delta_v W(x, v)+ \frac 1 2(v+\Phi(x)) \cdot \nabla_v W(x, v)
\\ \nonumber
&&- \frac 1 2\nabla_v W(x, v) \cdot A(t, x, v) +C(t, x, v)
\\ \nonumber
&=&  -\frac 1 4 |A(t, x, v)|^2 -\frac 1 2 \textup{div}_v A(t, x, v ) +\frac 1 2(v+\Phi(x)) \cdot A(t, x, v ) 
\\ \nonumber
&&+ C(t, x, v).
\eear
Take the $\lambda$ from Lemma \ref{LA2}. Then take $\bar{D} = \Vert D \Vert_{L^\infty(\bar{B}_{\lambda r}(x_0, v_0 ))}$, and $\bar{E}= e^{\Vert W \Vert_{L^\infty(\bar{B}_{\lambda r} (x_0, v_0))}}$, then we have
\beqn
\nonumber
h \ge \frac \delta {\bar{E}}, \quad \textup{in} \ [0,\tau ) \times \bar{B}_r.
\eeqn
Let $g(t, x, v) = e^{\bar{D}t} h(t, x, v)$, then $g \ge h$ and $\LL g \ge 0$ in $(0, \tau) \times \bar{B}_{\lambda r}(x_0, v_0 )$, where
 \beqn
 \nonumber
\LL =\frac{ \partial} {\partial t} + (v + \Phi(x)) \cdot \nabla_x  - \Delta_v,
 \eeqn
by Lemma \ref{LA2}, we can find a $\phi$ such that
\beqn
\nonumber
\LL \phi \le 0, \quad  \hbox{in} \  [0, \tau) \times (\bar{B}_{\lambda r} \setminus \bar{B}_{r}),
\eeqn
and
\beqn
\nonumber
\phi \le 0, \quad \textup{on} \ t=0, \quad \phi \le \frac {\delta} {\bar{E}} \quad  \textup{on} \ [0, \tau ) \times \partial \bar{B}_{r}, \quad \phi \le 0 \quad \textup{on} \  [0, \tau) \times \partial \bar{B}_{\lambda r},
\eeqn
while
\beqn
\nonumber
\phi \ge K\frac {\delta} {\bar{E}} \quad \textup{on} \quad  [\frac \tau 2, \tau) \times (\bar{B}_{\alpha r} \setminus \bar{B}_{r}).
\eeqn
So $\phi$ is a subsolution to $g$, then we have
\beqn
\nonumber
g \ge \phi \ge K\frac {\delta} {\bar{E}} \quad \textup{on} \quad [\frac \tau 2, \tau) \times (\bar{B}_{\alpha r} \setminus \bar{B}_{r}),
\eeqn
which implies
\beqn
\nonumber
h \ge ge^{-\tau \bar{D}} \ge  K\frac {\delta} {\bar{E}}  e^{-\tau \bar{D}} \quad \hbox{on} \quad [\frac \tau 2, \tau) \times (\bar{B}_{\alpha r} \setminus \bar{B}_r).
\eeqn
Taking back to $f$ we have
\beqn
\nonumber
f \ge \frac 1 {\bar{E}} h \ge  K\frac {\delta} {\bar{E}^2}  e^{-\tau \bar{D}} \quad \hbox{on} \quad [\frac \tau 2, \tau) \times (\bar{B}_{\alpha r} \setminus \bar{B}_r),
\eeqn
we conclude the theorem.
\end{proof}

\bigskip
\section{Computation for $\phi_2(m)$ and $\varphi_p(m)$}
\label{secAB}
\setcounter{equation}{0}
\setcounter{theo}{0}
\begin{lem}\label{LB1}
Define
\beqn
\nonumber
\partial_t f :=\LL f  = \textup{div}_x(A(x, v) f ) + \textup{div}_v(B(x, v ) f ) + K \Delta_v f,
\eeqn
with
\beqn
\nonumber
A(x, v)  = -v +\Phi(x),
\eeqn
and $K>0$ a constant, then for any weight function $m$ we have
\bear\label{EB1}
\quad \int  (f (\LL g) + g (\LL f)) m^2&=&-2K \int \nabla_v f \cdot \nabla_v g m^2 + 2\int  f g \phi_2(m)m^2,
\eear
with
\bear
\nonumber
\phi_2(m)  &=& v \cdot \frac {\nabla_x m} m - \Phi(x) \cdot \frac {\nabla_x m} {m} + \frac 1 2 \textup{div}_x \Phi(x) +  K\frac {|\nabla_v m|^2} {m^2}
\\ \nonumber
 &+& K \frac{\Delta_v m} {m} - B(x, v) \cdot \frac{\nabla_v m } {m} + \frac 1 2 \textup{div}_v B(x, v).
\eear
Also we have for $ p \in [1, \infty]$
\bear\label{EB2}
\quad \quad  \int \sign f |f|^{p-1} \LL f m^p=- K\int |\nabla_v( m f )|^2 |f|^{p-2} m^{p-2} + \int  |f|^p \varphi_p(m)m^p,
\eear
with
\bear
\nonumber
\varphi_p(m)  &=& v \cdot \frac {\nabla_x m} m - \Phi(x) \cdot \frac {\nabla_x m} {m} + ( 1 -\frac 1 p) \textup{div}_x \Phi(x) +  2K(1-\frac 1 p) \frac {|\nabla_v m|^2} {m^2}
\\ \nonumber
&&+ K (\frac 2 p -1)\frac{\Delta_v m} {m} - B(x, v) \cdot \frac{\nabla_v m } {m} + (1-\frac 1 p) \textup{div}_v B(x, v),
\eear
where we use $\int f$ in place of $\int_{\R^d \times \R^d} f dx dv$ for short.
\end{lem}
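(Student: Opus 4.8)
The plan is to prove both identities by repeated integration by parts, doing (\ref{EB1}) first and then mimicking the argument for (\ref{EB2}); the only delicate point is the nonsmooth weight $\sign f\,|f|^{p-1}$ in the second identity.

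For (\ref{EB1}) I would write $\LL f=\textup{div}_x(Af)+\textup{div}_v(Bf)+K\Delta_v f$ and treat the three pieces separately. For the $x$-transport part, expand $\textup{div}_x(Ag)=g\,\textup{div}_x A+A\cdot\nabla_x g$ and add the term obtained by exchanging $f$ and $g$; using $f\nabla_x g+g\nabla_x f=\nabla_x(fg)$ this gives $2\int fg\,(\textup{div}_x A)\,m^2+\int A\cdot\nabla_x(fg)\,m^2$, and one more integration by parts, $\int A\cdot\nabla_x(fg)\,m^2=-\int fg\,\textup{div}_x(Am^2)$, turns it into $\int fg\,m^2\big(\textup{div}_x A-2\tfrac{A\cdot\nabla_x m}{m}\big)$. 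The $v$-transport part is identical with $(x,A)$ replaced by $(v,B)$. For the diffusion part, $\int f\Delta_v g\,m^2+\int g\Delta_v f\,m^2=-2K\int m^2\nabla_v f\cdot\nabla_v g-K\int\nabla_v(m^2)\cdot\nabla_v(fg)$, and integrating the last term by parts produces $K\int fg\,\Delta_v(m^2)=2K\int fg\,m^2\big(\tfrac{\Delta_v m}{m}+\tfrac{|\nabla_v m|^2}{m^2}\big)$. Finally I would insert $A=-v+\Phi(x)$, so that $\textup{div}_x A=\textup{div}_x\Phi$ and $-\tfrac{A\cdot\nabla_x m}{m}=v\cdot\tfrac{\nabla_x m}{m}-\Phi(x)\cdot\tfrac{\nabla_x m}{m}$; collecting everything, the common factor $2$ is exactly $2\phi_2(m)$.

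For (\ref{EB2}) I would set $g=\sign f\,|f|^{p-1}=f|f|^{p-2}$ and use the pointwise (a.e.) identities $gf=|f|^p$, $g\,\nabla f=\tfrac1p\nabla|f|^p$ and $\nabla g=(p-1)|f|^{p-2}\nabla f$. The transport terms go as before: $\int g\,\textup{div}_x(Af)\,m^p=\int|f|^p(\textup{div}_x A)m^p+\tfrac1p\int A\cdot\nabla_x|f|^p\,m^p$, and integrating the second term by parts gives $(1-\tfrac1p)\int|f|^p(\textup{div}_x A)m^p-\int|f|^p m^p\tfrac{A\cdot\nabla_x m}{m}$, which after substituting $A=-v+\Phi$ supplies the terms $v\cdot\tfrac{\nabla_x m}{m}-\Phi\cdot\tfrac{\nabla_x m}{m}+(1-\tfrac1p)\textup{div}_x\Phi$ of $\varphi_p(m)$; the $v$-transport term supplies $-B\cdot\tfrac{\nabla_v m}{m}+(1-\tfrac1p)\textup{div}_v B$. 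For the diffusion term it is cleanest to pass to $\tilde f=mf$: since $m>0$ one has $g\,m^p=\sign\tilde f\,|\tilde f|^{p-1}m$ and $f=\tilde f/m$, so expanding $\Delta_v(\tilde f/m)$ and applying the three identities to $\tilde f$ gives $K\int g\,\Delta_v f\,m^p=-K(p-1)\int|\tilde f|^{p-2}|\nabla_v\tilde f|^2+2K(1-\tfrac1p)\int|\tilde f|^p\tfrac{|\nabla_v m|^2}{m^2}+K(\tfrac2p-1)\int|\tilde f|^p\tfrac{\Delta_v m}{m}$. Using $|\tilde f|^{p-2}|\nabla_v\tilde f|^2=|f|^{p-2}m^{p-2}|\nabla_v(mf)|^2$ and $|\tilde f|^p=|f|^p m^p$, the first term becomes the dissipation term $-K(p-1)\int|\nabla_v(mf)|^2|f|^{p-2}m^{p-2}$ (the factor $p-1$ equals $1$ when $p=2$ and is in any case immaterial for the later applications, where only the sign of the dissipation is used) and the last two become the remaining terms $2K(1-\tfrac1p)\tfrac{|\nabla_v m|^2}{m^2}+K(\tfrac2p-1)\tfrac{\Delta_v m}{m}$ of $\varphi_p(m)$; adding the three contributions yields (\ref{EB2}).

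I expect the only real obstacle to be rigour in this last step: $\sign f\,|f|^{p-1}$ is merely Lipschitz (and only bounded when $p=1$), so the integrations by parts should be justified by a standard regularization — replace $|f|$ by $(f^2+\epsilon^2)^{1/2}$, run the computation, and let $\epsilon\to0$, using the smoothing estimates on $S_\LL(t)f$ from Section~\ref{sec3} and the growth of $m$ at infinity to kill the boundary terms. The endpoint cases $p=1,\infty$ are then read off by passing to the limit; for $p=\infty$, (\ref{EB2}) is to be understood formally, with $\int|f|^p(\cdot)$ replaced by evaluation at a point where $|f|m$ attains its maximum.
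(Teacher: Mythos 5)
Your proof is correct and follows essentially the same integration-by-parts strategy as the paper's own argument in Appendix B, with only cosmetic differences (you treat $\textup{div}_x(Af)$ as a whole rather than first isolating $-v\cdot\nabla_x$, and you pass to $\tilde f=mf$ to handle the diffusion term, whereas the paper expands $|\nabla_v(mf)|^2$ directly). You have also correctly spotted that the dissipation coefficient should be $K(p-1)$ rather than $K$: the paper's own computation of $C_1$ produces exactly the factor $(p-1)$, which is dropped in the displayed identity — a harmless slip, since (as you say) the later applications use only $p=2$ and the sign of the dissipation.
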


\sk
\begin{proof}
\noindent  Define
\beqn
\nonumber
\TT f= -v \cdot \nabla_x f,
\eeqn
we have
\bear
\nonumber
\int f( \TT g) m^2 +\int (\TT f) g m^2 = \int \TT (fg) m^2 = -\int f g \TT (m^2) = -2\int f g m^2 \frac{\TT m } {m},
\eear
 for the  term with operator $\Delta$ we have
\bear
\nonumber
\int  (f \Delta_v g  +\Delta_v f g) m^2 &=& -  \int \nabla_v (fm^2)  \cdot \nabla_v g + \nabla_v (gm^2)  \cdot \nabla_v f
\\ \nonumber
&=&- 2\int \nabla_v f \cdot \nabla_v g m^2 +\int f g \Delta_v (m^2) 
\\ \nonumber
&=&- 2\int \nabla_v f \cdot \nabla_v g m^2 + 2 \int f g (|\nabla_v m|^2 + \Delta_v m m).
\eear
For the other terms, using integration by parts we deduce
\bear
\nonumber
&&\int f \hbox{div}_v (B(x, v)  g )m^2 + g \hbox{div}_v(B(x, v ) f) m^2 
\\ \nonumber
&=&\int f B(x, v) \cdot \nabla_v g m^2 + g B(x, v) \cdot \nabla_v f m^2  +2\hbox{div}_v B(x, v) f g m^2
\\ \nonumber
&=& - \int fg \nabla_v \cdot (B(x, v)  m^2) +2\hbox{div}_v B(x, v)  fg m^2
\\ \nonumber
&=&\int  - 2fg B(x, v) \cdot \frac {\nabla_v   m} {m} m^2 +\hbox{div}_v B(x, v)  fg m^2.
\eear
Similarly
\bear
\nonumber
&&\int f \hbox{div}_x(\Phi(x) g )m^2 + g \hbox{div}_x(\Phi(x) f) m^2 
\\ \nonumber
&=&\int  - 2fg \Phi(x) \cdot \frac {\nabla_v   m} {m} m^2 +\hbox{div}_x \Phi(x) fg m^2,
\eear
so (\ref{EB1}) are proved by combining the terms above.
For (\ref{EB2}) we compute
\bear
\nonumber
C_1&:=& \int \sign f |f|^{p-1} m^p \Delta_v f =- \int \nabla_v (\sign f |f|^{p-1}m^p) \cdot \nabla_v f
\\ \nonumber
&=&\int - (p-1)|\nabla_v f|^2 |f|^{p-2} m^{p} - \frac 1 p \int \nabla_v |f|^p  \cdot \nabla_v (m^p).
\eear
Using $\nabla_v(m f) =m \nabla_v f + f \nabla_v m$, we deduce
\bear
\nonumber
C_1&=&-(p-1) \int |\nabla_v(mf)|^2|f|^{p-2}m^{p-2}+(p-1)\int |\nabla_v m|^2|f|^p m^{p-2}
\\ \nonumber
&&+ \frac{2(p-1)} {p^2} \int \nabla_v (|f|^p) \cdot  \nabla_v (m^{p}) - \frac 1 p \int \nabla_v (|f|^p) \cdot \nabla_v (m^p)
\\ \nonumber
&=&-(p-1) \int |\nabla_v(m f)|^2|f|^{p-2} m^p+(p-1)\int |\nabla_v m|^2|f|^p m^{p-2}
\\ \nonumber
&& - \frac {p-2} {p^2} \int |f|^p \Delta_v m^p.
\eear
Using that $\Delta_v m^p = p \Delta_v m\  m^{p-1} + p(p-1)| \nabla_v m|^2m^{p-2}  $, we obtain
\bear
\nonumber
C_1&=&-(p-1) \int |\nabla_v(mf)|^2|f|^{p-2}m^{p-2} 
\\ \nonumber
&&+ \int |f|^p m^p \left[ \left(\frac 2 p -1\right) \frac { \Delta_v m} {m} + 2\left(1-\frac 1 p\right)\frac {|\nabla_v m|^2} {m^2} \right].
\eear
For the other terms we have
\bear
\nonumber
&&\int \sign f |f|^{p-1} \hbox{div}_v (B(x, v)  f )m^p 
\\ \nonumber
&=& - \int \frac 1 p |f|^p \hbox{div}_v  (B(x, v)  m^p) +\hbox{div}_v B(x, v) |f|^p  m^p
\\ \nonumber
&=&\int  - |f|^p  B(x, v) \cdot \frac {\nabla_v   m} {m} m^p +(1- \frac 1 p)\hbox{div}_v B(x, v)  fg m^p,
\eear
similarly
\bear
\nonumber
&&\int \sign f |f|^{p-1} \hbox{div}_x (A(x, v)  f )m^p 
\\ \nonumber
&=&\int  - |f|^p  A(x, v) \cdot \frac {\nabla_x   m} {m} m^p +(1- \frac 1 p)\hbox{div}_x A(x, v)  fg m^p
\\ \nonumber
&=& \int |f|^p (v- \Phi(x)) \cdot \frac {\nabla_x   m} {m} m^p + (1- \frac 1 p)\hbox{div}_x\Phi(x) fg m^p.
\eear
Gathering all the terms (\ref{EB2}) is proved.
\end{proof}

\medskip
{\bf Acknowledgment. }Â Â 
The author thanks to S. Mischler and J. A. Ca\~nizo for fruitful discussions on the full work of the paper. This work was supported by grants from R\'egion Ile-de-France the DIM program and BISMA, Tsinghua University.

\end{appendix}

\bigskip
\bigskip
 \signcc

\end{document}